\newtheorem{corollary}[equation]{Corollary}
\newtheorem{claim}[equation]{\indent{\it Claim}\rm }
\newtheorem{example}[equation]{\indent \rm {\it Example}}
\newtheorem{lemma}[equation]{Lemma}
\newtheorem{proposition}[equation]{Proposition}
\newtheorem{theorem}[equation]{Theorem}
\newcommand{\C}{{\mathbb{C}}}
\renewcommand{\P}{{\mathbb{P}}}
\newcommand{\E}{{\mathbb{E}}}
\newcommand{\R}{{\mathbb{R}}}
\newcommand{\rank}{\mathrm{rank}}
\newcommand{\supp}{\mathrm{Supp}\,}
\numberwithin{equation}{section}
\title[Modified defect relation for Gauss maps of minimal surfaces]{Modified defect relation for Gauss maps of minimal surfaces with hypersurfaces of projective varieties\\ in subgeneral position} 
\author{Si Duc Quang}
\address{$^1$Department of Mathematics, Hanoi National University of Education\\
136-Xuan Thuy, Cau Giay, Hanoi, Vietnam}
\address{$^2$Institute of Natural Sciences, Hanoi National University of Education\\
136-Xuan Thuy, Cau Giay, Hanoi, Vietnam}
\email{quangsd@hnue.edu.vn}
\begin{document}

\begin{abstract}
In this paper, we establish some modified defect relations for the Gauss map $g$ of a complete minimal surface $S\subset\mathbb R^m$ into a $k$-dimension projective subvariety $V\subset\mathbb P^n(\mathbb C)\ (n=m-1)$ with hypersurfaces $Q_1,\ldots,Q_q$ of $\mathbb P^n(\mathbb C)$ in $N$-subgeneral position with respect to $V\ (N\ge k)$. In particular, we give the upper bound for the number $q$ if the image $g(S)$ intersects each hypersurfaces $Q_1,\ldots,Q_q$ a finite number of times and $g$ is nondegenerate over $I_d(V)$, where $d=lcm(\deg Q_1,\ldots,\deg Q_q)$, i.e., the image of $g$ is not contained in any hypersurface $Q$ of degree $d$ with $V\not\subset Q$. Our results extend and generalize the previous results for the case of the Gauss map and hyperplanes in a projective space. The results and the method of this paper have been applied by some authors to study the unicity problem of the Gauss maps sharing families of hypersurfaces.
\end{abstract}

\maketitle

\def\thefootnote{\empty}
\footnotetext{
2010 Mathematics Subject Classification:
Primary 53A10, 53C42; Secondary 30D35, 32H30.\\
\hskip8pt Key words and phrases: Gauss map, value distribution, holomorphic curve, modified defect relation, ramification, hypersurface.}


\section{Introduction and Main results} 
In a series of papers \cite{O59,O63,O64,O86}, Osserman began studying the value distribution of the Gauss maps of minimal surfaces. We summarize his results as the following theorem.

\vskip0.2cm
\noindent
{\bf Theorem A}\ (R. Osserman).\ {\it Let $S$ be a complete minimal surface in $\R^3$. Then: 
\begin{itemize}
\item[(1)] $S$ has infinite total curvature $\Leftrightarrow$ the Gauss map of $S$ takes on all directions infinitely often with the exception of at most a set of logarithmic capacity zero,
\item[(2)] $S$ has finite non-zero total curvature $\Leftrightarrow$ the Gauss map of $S$ takes on all directions a finite number of times, omitting at most three directions,
\item[(3)] $S$ has zero total curvature $\Leftrightarrow$ $S$ is a plane.
\end{itemize}}

In 1981, Xavier \cite{X81} improved the assertion (1) of Theorem A by showing that the Gauss map of a complete minimal surface $S$ in $\R^3$ can omit at most six directions unless it is a plane. Later on, Fujimoto \cite{Fu88} showed that the result of Xavier still valid if the Gauss map omits only four directions, and the number four is the best possible number.

Combining the early method of Osserman with the method of H. Fujimoto, X. Mo and R. Osserman \cite{MO90} proved the following theorem.

\vskip0.2cm
\noindent
{\bf Theorem B}\ (X. Mo and R. Osserman \cite{MO90}).\ {\it Let $S$ be a complete minimal surface in $\R^3$ with infinite total curvature. Then its Gauss map must take every direction infinitely often except at most four directions.} 

For the case of higher dimension, H. Fujimoto proved the following theorem.

\vskip0.2cm
\noindent
{\bf Theorem C}\ (H. Fujimoto \cite{Fu90}).\ {\it Let $S$ be a complete minimal surface in $\R^m$ with nondegenerate Gauss map. Then the image of $S$ under the Gauss map can omit at most $m(m+1)/2$ hyperplanes in general position in $\P^{m-1}(\C)$.}

And then, X. Mo in \cite{M94} improved the above theorem of H. Fujimoto by replacing the condition ``can omit at most $m(m+1)/2$ hyperplanes'' by the condition ``intersects only a finite number of times with these hyperplanes''.

However, in all the above mentioned results, the assumptions imply directly that there is a compact subset $K$ of $S$ such that the image of $S\setminus K$ under the Gauss map does not intersect these hyperplanes, i.e., the Gauss map at near ``the boundary of $S$'' will omit these hyperplanes. In order to study the general case where such compact subset $K$ may not exist, H. Fujimoto have defined the notion of modified defect for holomorphic curves from $S$ into $\P^{m-1}(\C)$ and proved some modified defect relation for such curves with hyperplanes (cf. \cite{Fu89,Fu90,Fu91}). Here a modified defect relation is an above bound estimate of the sum of some modified defects for the curve with hyperplanes. In those works, H. Fujimoto constructed a metric of negative curvature under certain conditions on $S$ and then using Schwarz-Pick's Lemma to derive an inequality which is the essential key to the study of the value distribution of the Gauss map. Based on that method, X. Mo \cite{M94} proved the following.

\vskip0.2cm
\noindent
{\bf Theorem D}\ (X. Mo \cite{M94}).\ {\it Let $S$ be a complete nondegenerate minimal surface in $\R^m$ such that the Gauss map $f=(f_0:\cdots:f_n)$ (here $n=m-1$) intersects only a finite number of times the hyperplanes $H_1,\ldots,H_q$ of $\P^n(\C)$ in general position. If $q> m(m+1)/2 = (n+1)+n(n+1)/2$, then $S$ must have finite total curvature.}

Our aim in this paper is to generalize the notion of modified defect for holomorphic curves and hyperplanes of a projective space of H. Fujimoto to the case of hypersurfaces in projective varieties, and then study the relations between those modified defect relations for the Gauss maps and hypersurfaces of a projective variety in subgeneral position with the total curvature of minimal surfaces in $\R^m$. In order to do so, firstly we have to establish the defect relation for holomorphic curves from punctured disks into projective varieties with hypersurfaces in subgeneral position. Secondly, we reconstructed all notions, functions and estimates introduced by H. Fujimoto in \cite{Fu89,Fu90,Fu91} for the case of hypersurfaces. To state our results, we introduce the following notions.

Let $S$ be an open complete Riemann surface. In this paper, by a divisor on $S$ we mean a map $\nu$ of $S$ into $\mathbb Z$ whose support $\supp(\nu) :=\{p\in S\ |\ \nu(p)\ne 0\}$ has no accumulation points in $S$. 

Let $f:S\rightarrow\P^n(\C)$ be a holomorphic curve and let $Q$ be a hypersurface in $\P^n(\C)$ of degree $d$. By $\nu_{Q(f)}$ we denote the pullback divisor $Q$ by $f$.  Let $F=(f_0,\ldots,f_n)$ be a global reduced representation of $f$ and $K$ be a compact subset of $S$. Let $A=S\setminus K$. 

The $S$-defect truncated to level $m$ of the hyperpsurface $Q$ for $f$ is defined by 
$$\delta^{S,m}_{f,A}(Q):=1- \mathrm{inf}\{\eta\ |\ \eta \text{ satisfies Condition }(*)_S\}.$$
Here, condition $(*)_S$ means that there exists a $[-\infty,\infty)$-valued continuous subharmonic function $u\ (\not\equiv -\infty)$ on $A$ satisfying the following conditions:
\begin{itemize}
\item[$(D_1)$] $e^u\le \|F\|^{d\eta}$,
\item [$(D_2)$] for each $\xi\in f^{-1}(Q)$ there exists the limit 
$$\underset{z\rightarrow\xi}\lim (u(z)-\min\{\nu_{Q(f)}(z),m\}\log|z -\xi|) \in[-\infty,\infty),$$
where $z$ is a holomorphic local coordinate around $\xi$.
\end{itemize}

The $H$-defect truncated to level $m$ of $Q$ for $f$ is defined by
$$\delta^{H,m}_{f,A}(Q):=1- \mathrm{inf}\{\eta\ |\ \eta \text{ satisfies Condition }(*)_H\}.$$
Here, condition $(*)_H$ means that there exists a $[-\infty,\infty)$-valued continuous subharmonic function $u\ (\not\equiv\infty)$ on $A$, harmonic on $A\setminus f^{-1}(Q)$ and satisfies conditions $(D_1),(D_2)$.

We say a function $u$ with mild singularities on a domain $D\subset S$ if $u$ is a complex-valued function of the class $\mathcal C^\infty$ on $D$ outside a discrete set $E$ and every point $a\in D$ has a neighborhood $U$ such that for a holomorphic local coordinate $z$ with $z(a)=0$ on $U$ we can write
$$ |u(z)|=|z|^\alpha u^*(z)\prod_{j=1}^q\left|\log\dfrac{1}{|g_j(z)|v_j(z)}\right|^{r_j}$$
on $U\setminus E$ with some real number $\sigma$, non-positive real numbers $r_j$, nonzero holomorphic functions $g_j$ with $g_j(0)=0$ and some positive $\mathcal C^{\infty}$ functions $u^*$ and $v_j$ on $U$, where $0\le q<\infty$.

The modified $H$-defect truncated to level $m$ of $Q$ for $f$  is defined by
$$D^m_{f}(Q) := 1 - \mathrm{inf}\{1-\eta\ |\ \eta \text{ satisfies the condition (*)}.\}$$
Here, the condition (*) means that: there are a compact subset $K$ of $S$, a divisor $\nu$ and a continuous real-valued bounded function $k$ with mild singularities on $S\setminus K$, a positive constant $c$ such that $\nu(z)>c$ for each $z\in\supp (\nu)$ and
$$ [\min\{m,\nu_{Q(f)}\}]+[\nu]=d\eta f^*\Omega+dd^c[\log|k|^2] $$
in the sense of current, where $\Omega$ is the Fubini-Study form of $\P^n(\C)$. Hence, if we set $h=|k|\cdot\|F_z\|^{\eta}$ for a local reduced representation $F_z$ (on a local holomorphic chat $U,z$) of $f$ then $\log h$ is harmonic on $U\setminus (K\cup\supp(\nu_h)\cup f^{-1}(Q))$ and $\nu_h -\min\{m,\nu_{H(f)}\}=v$.

By \cite[Proposition 2.3]{Fu90} and \cite[Remark 5.3]{Fu91} we see that 
\begin{align}\label{new}
0 \le  \delta^{H,m}_{f,A}(Q) \le  \delta^{S,m}_{f,A}(Q)\le  1\text{ and }0 \le \delta^{H,m}_{f,A}(Q)\le D^{m}_{f}(Q)\le 1.
\end{align}

Let $V$ be a complex projective subvariety of $\P^n(\C)$ of dimension $k\ (k\le n)$. Let $d$ be a positive integer. We denote by $I(V)$ the ideal of homogeneous polynomials in $\C [x_0,...,x_n]$ defining $V$ and by $\C [x_0,...,x_n]_d$ the vector space of all homogeneous polynomials in $\C [x_0,...,x_n]$ of degree $d$ including the zero polynomial. Define 
$$I_d(V):=\dfrac{\C [x_0,...,x_n]_d}{I(V)\cap \C [x_0,...,x_n]_d}\text{ and }H_V(d):=\dim I_d(V).$$
Then $H_V(d)$ is called the Hilbert function of $V$. Each element of $I_d(V)$ which is an equivalent class of an element $Q\in H_d,$ will be denoted by $[Q]$. 

Let $Q_1,...,Q_q\ (q\ge k+1)$ be $q$ hypersurfaces in $\P^n(\C)$. The family of hypersurfaces $\{Q_i\}_{i=1}^q$ is said to be in $N$-subgeneral position with respect to $V$ if 
$$ V\cap (\bigcap_{j=1}^{N+1}Q_{i_j})=\varnothing \ \forall\ 1\le i_1<\cdots <i_{N+1}.$$
If  $\{Q_i\}_{i=1}^q$ is in $n$-subgeneral position then we say that it is in \textit{general position} with respect to $V.$ Throughout this paper, sometime we will denote by the same notation $Q$ for the defining homogeneous polynomial of the hypersurface $Q$ if there is no confusion.

Let $f:S\longrightarrow V$ be a holomorphic curve. We say that $f$ is degenerate over $I_d(V)$ if there is $[Q]\in I_d(V)\setminus \{0\}$ such that $Q(F)\equiv 0$ for some local reduced representation $F$ of $f$. Otherwise, we say that $f$ is nondegenerate over $I_d(V)$. It is clear that if $f:S\longrightarrow V$ is algebraically nondegenerate, then $f$ is nondegenerate over $I_d(V)$ for every $d\ge 1.$ Our first main result is stated as follows.
 
\begin{theorem}\label{1.1} 
Let $S$ be a complete minimal surface in $\R^m$. Let $V$ be a projective subvariety of dimension $k$ of $\P^{n}(\C)\ (n=m-1)$. Let $Q_1,\ldots,Q_q$ be hypersurfaces of $\P^n(\C)$ in $N$-subgeneral position with respect to $V$. Let $d$ be the least common multiple of $\deg Q_j\ (1\le j\le q)$, i.e., $d=lcm(\deg Q_1,\ldots,\deg Q_q)$. Assume that the Gauss map $g:S\rightarrow V\subset\P^n(\C)$ is nondegenerate over $I_d(V)$ and
$$\sum_{j=1}^qD^{M}_{g}(Q_j)>\dfrac{(2N-k+1)(M+1)}{k+1}+\dfrac{(2N-k+1)M(M+1)}{2d(k+1)},$$
where $M=H_V(d)-1$. Then $S$ has finite total curvature.
\end{theorem}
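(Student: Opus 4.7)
\begin{pf}[Proof proposal]
The plan is to argue by contradiction: assume $S$ has infinite total curvature, and derive a contradiction from the hypothesis on the sum of modified $H$-defects via a negatively curved pseudo-metric construction in the spirit of Fujimoto, adapted to hypersurfaces on $V$.

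First, I would exploit the nondegeneracy of $g$ over $I_d(V)$ by replacing each $Q_j$ with its $(d/\deg Q_j)$-th power so that all hypersurfaces have the same degree $d$, and then composing $g$ with a basis $\{\phi_0,\ldots,\phi_M\}$ of $I_d(V)$ (where $M=H_V(d)-1$) to obtain an auxiliary linearly nondegenerate holomorphic curve $g^\sharp:S\to\P^M(\C)$. Under $g^\sharp$ each hypersurface $Q_j$ corresponds to a hyperplane $\tilde Q_j$ of $\P^M(\C)$, and the $N$-subgeneral position of the $Q_j$ with respect to $V$ translates, by standard combinatorial arguments used in the hypersurface second main theorem, into a position condition that yields the Nochka-type coefficient $(2N-k+1)(M+1)/(k+1)$.

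Next, for each $j$, the assumption on $\sum D^M_g(Q_j)$ lets me pick $\eta_j>1-D^M_g(Q_j)$ (so that $\sum\eta_j$ is strictly less than the sum of $D$-terms reversed) together with a compact set $K$, a divisor $\nu_j$ bounded below by a positive constant on its support, and a continuous bounded function $k_j$ with mild singularities on $S\setminus K$ such that
$$[\min\{M,\nu_{Q_j(g)}\}]+[\nu_j]=d\eta_j\, g^*\Omega+dd^c[\log|k_j|^2].$$
Setting $h_j=k_j\|G_z\|^{\eta_j}$ in a local reduced representation $G_z$ of $g$, $\log h_j$ is harmonic on an open set complement of a discrete set outside $K$, with prescribed pole/zero contribution at $g^{-1}(Q_j)$. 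I would then form the pseudo-metric
$$d\tau^2=\Bigl(\prod_{j=1}^q \frac{|Q_j(G_z)|^{1/d}\,(-\log|Q_j(G_z)|^{1/d})^{-1}}{\,h_j^{1/\eta_j}\,}\Bigr)^{2\rho}\cdot \bigl|W(g^\sharp)\bigr|^{2\rho/(M+1)}\, |dz|^2,$$
with weighting $\rho$ chosen so that the exponent condition forced by the hypothesis is
$$\rho\Bigl(\sum_j \eta_j-\tfrac{(2N-k+1)(M+1)}{k+1}-\tfrac{(2N-k+1)M(M+1)}{2d(k+1)}\Bigr)>1,$$
which is exactly the strict inequality of the theorem. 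A direct $dd^c$ computation, using the divisor equations for the $h_j$ and the second main theorem for $g^\sharp$ and the hyperplanes $\tilde Q_j$ in the appropriate subgeneral position with respect to the image $g^\sharp(S)$, produces the Ricci estimate $\ddc\log d\tau^2\ge d\tau^2$ outside the exceptional discrete set on $S\setminus K$.

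Once the negatively curved pseudo-metric is in hand, I would follow the Mo--Fujimoto finale: pass to the universal cover, and apply the Ahlfors--Schwarz lemma to compare $d\tau^2$ with the Poincar\'e metric on a disk; together with the standard completeness-plus-infinite-total-curvature hypothesis this forces the original flat metric on $S$ to be incomplete (via the Ruh--Vilms identification of $d\tau^2$ with a power of the original induced metric multiplied by a bounded factor), contradicting the completeness of $S$. Hence $S$ has finite total curvature.

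The main obstacle will be the $dd^c$ bookkeeping in the Ricci estimate. Outside a compact set, the logarithms of the $h_j$ are only harmonic modulo the mild singularities, so controlling the curvature requires showing that mild singularities contribute nonpositively to $\ddc\log d\tau^2$; this is what the ``non-positive $r_j$'' and $g_j(0)=0$ in the definition of mild singularities are designed for, but the precise inequality has to be verified carefully. The combinatorial step of translating the $N$-subgeneral position of $\{Q_j\}$ with respect to $V$ into the correct Nochka weights producing the factor $(2N-k+1)/(k+1)$ is also delicate; this is where the earlier hypersurface second main theorem for punctured-disk curves (announced in the introduction) is essential, and its statement must be matched to the exponents in the pseudo-metric.
\end{pf}
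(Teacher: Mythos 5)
Your overall strategy cannot deliver the conclusion, and this is the essential gap. You open with ``assume $S$ has infinite total curvature and derive a contradiction with completeness,'' but the infinite-total-curvature assumption is never actually used in your construction: the pseudo-metric, the Ricci estimate, and the Ahlfors--Schwarz comparison depend only on the defect hypothesis and on the completeness of $S$. If that chain of reasoning really forced $ds^2$ to be incomplete, it would prove that \emph{no} complete minimal surface satisfies the hypothesis, which is false (a complete surface of finite total curvature whose Gauss map omits all the $Q_j$ satisfies it, and the theorem is precisely about such surfaces). Concretely, the step ``Ahlfors--Schwarz on the universal cover forces the original metric to be incomplete'' does not go through: the Schwarz--Pick inequality only bounds $d\tau^2$ from above by the Poincar\'e metric, and divergent paths in the Poincar\'e disk have infinite Poincar\'e length, so no divergent curve of finite $ds$-length is produced this way.

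The paper's proof has a different architecture that your proposal is missing. The metric $d\tau^2$ built from the modified-defect data $(\eta_j,\nu_j,k_j)$ is \emph{flat} outside a compact set and a discrete set (its local density is a product of moduli of holomorphic functions and of the $h_{zj}$, whose logarithms are harmonic there), not negatively curved. The negatively curved metric of Lemma \ref{lem3.7} and the Main Lemma enters only \emph{locally}, on the disk $\Delta(R)$ supplied by the developing map of this flat metric (Lemma \ref{lem4.1}), in order to bound $\Phi^*ds$ and show that a hypothetical divergent curve of finite $d\tau$-length would have finite $ds$-length; this proves that $d\tau^2$ is \emph{complete}. Completeness plus flatness outside a compact set then feeds into Huber's theorem (finite connectivity) and Osserman's theorem (each end is a punctured disk), so $S$ is conformally a compact surface minus finitely many points. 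The final, indispensable step --- which appears in your outline only as a closing parenthetical and plays no role in your argument --- is the defect relation for holomorphic curves from a punctured disk (Theorem \ref{thm2.7} and Corollary \ref{cor2.9}): it shows the Gauss map cannot have an essential singularity at any puncture, so $g$ extends holomorphically to the compactification and the total curvature equals $-2\pi l$ for an integer $l$. Without that extension step there is no route from your construction to finiteness of the total curvature.
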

By the inequality (\ref{new}), from Theorem \ref{1.1} we immediately have the following corollary about $H$-defect relation.

\begin{corollary}\label{1.2} 
Let $S, V,\{Q_i\}_{i=1}^q,d,M$ be as in Theorem \ref{1.1}. Let $K$ be a compact subset of $S$ and $A=S\setminus K$. Assume that the Gauss map $g:S\rightarrow V\subset\P^n(\C)$ is nondegenerate over $I_d(V)$ and
$$\sum_{j=1}^q\delta^{H,M}_{g,A}(Q_j)>\dfrac{(2N-k+1)(M+1)}{k+1}+\dfrac{(2N-k+1)M(M+1)}{2d(k+1)}.$$
Then $S$ has finite total curvature.
\end{corollary}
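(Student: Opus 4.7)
The plan is to deduce Corollary \ref{1.2} directly from Theorem \ref{1.1} by comparing the two defect invariants attached to each hypersurface $Q_j$. Specifically, I would invoke the second half of the chain of inequalities recorded in (\ref{new}), namely
$$\delta^{H,m}_{f,A}(Q) \le D^m_f(Q),$$
applied with $m=M$, $f=g$, and $Q=Q_j$ for each $j=1,\ldots,q$. This inequality is already established earlier in the paper (attributed to \cite[Proposition 2.3]{Fu90} and \cite[Remark 5.3]{Fu91}), so at this stage it may be used as a black box.

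Summing the $q$ resulting termwise inequalities and combining them with the standing hypothesis of Corollary \ref{1.2} will give
$$\sum_{j=1}^q D^{M}_{g}(Q_j) \;\ge\; \sum_{j=1}^q \delta^{H,M}_{g,A}(Q_j) \;>\; \dfrac{(2N-k+1)(M+1)}{k+1}+\dfrac{(2N-k+1)M(M+1)}{2d(k+1)}.$$
All the remaining assumptions of Theorem \ref{1.1}---the completeness of $S$ as a minimal surface in $\R^m$, the projective subvariety $V\subset\P^n(\C)$ of dimension $k$, the $N$-subgeneral position of $\{Q_j\}_{j=1}^q$ with respect to $V$, the definition of $d$ and $M$, and the nondegeneracy of the Gauss map $g$ over $I_d(V)$---appear verbatim in the statement of Corollary \ref{1.2}. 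Hence Theorem \ref{1.1} is directly applicable, and it forces $S$ to have finite total curvature, which is precisely the desired conclusion.

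Since the whole argument reduces to invoking a pointwise inequality already recorded in (\ref{new}) and then quoting the preceding theorem, there is no genuine obstacle in this derivation; the entire analytic difficulty is packaged inside Theorem \ref{1.1}. The only point that needs a moment's care is the direction of the inequality: one must verify that $D^M_g(Q_j)$ dominates $\delta^{H,M}_{g,A}(Q_j)$ (and not the other way around), so that a strict lower bound on the sum of $H$-defects transfers to a strict lower bound on the sum of modified $H$-defects. The auxiliary compact set $K$ and $A=S\setminus K$ present in the hypothesis of Corollary \ref{1.2} serve only to make the $H$-defect well-defined and play no further role; once one passes to $D^M_g(Q_j)$, which is an intrinsic invariant of $g$ independent of any choice of $K$, they drop out of the discussion entirely.
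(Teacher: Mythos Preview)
Your proposal is correct and matches the paper's own approach exactly: the paper states that Corollary~\ref{1.2} follows ``immediately'' from Theorem~\ref{1.1} via the inequality $\delta^{H,M}_{g,A}(Q_j)\le D^M_g(Q_j)$ recorded in (\ref{new}). Your additional remarks about the direction of the inequality and the role of $K$ and $A$ are accurate and simply make explicit what the paper leaves implicit.
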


We note that, a complete minimal surface has a universal covering biholomorphic to $\C$ or a ball in $\C$. Then, from Theorem 4.2  in \cite{QT}, we have the following $S$-defect relation.

\vskip0.2cm
\noindent
{\bf Theorem E}\ (D. D. Thai and S. D. Quang \cite{QT}).\ {\it Let $S$ be a complete minimal surface in $\R^m$. Let $V,\{Q_i\}_{i=1}^q,d,M$ be as in Theorem \ref{1.1}. Assume that the Gauss map $g:S\rightarrow V\subset\P^n(\C)$ is nondegenerate over $I_d(V)$. Then we have
$$\sum_{j=1}^q\delta^{S,M}_{g,S}(Q_j)\le \dfrac{(2N-k+1)(M+1)}{k+1}+\dfrac{(2N-k+1)M(M+1)}{d(k+1)}.$$}

On the other hand, for the case of minimal surface with finite total curvature and the Gauss map is ramified at least $m_j$ over $Q_j\ (1\le j\le q)$, in a recent work \cite{QTT} the author with D. D. Thai and P. D. Thoan showed a sharper defect relation (cf. \cite[Theorem 1.2]{QTT}) as follows
$$\delta^{S,M}_{g,S}(Q_j)\le \le \dfrac{(2N-k+1)(M+1)}{k+1}+\dfrac{(2N-k+1)M(M+1)}{2d(k+1)}.$$
Without the condition ``ramified over targets'', for the case of linear nondegenerate Gauss map and hyperplanes $\{H_i\}_{i=1}^q$ of $\P^n(\C)$ in $N$-subgeneral position, P. H. Ha \cite{Ha18b} showed that
$$\sum_{j=1}^q\delta^{S,n}_{g,S}(H_j)\le 2N-n+1+\dfrac{(2N-n+1)n}{2}.$$
Our second purpose in this paper is to generalize all above results to the case of hypersurfaces of projective varieties in subgeneral position and without the condition ``ramified over hypersurfaces''. Our second main result is stated as follows.
\begin{theorem}\label{1.3} 
Let $S$ be a complete minimal surface with finite total curvature in $\R^m$. Let $V,\{Q_i\}_{i=1}^q,d,M$ be as in Theorem \ref{1.1}. Assume that the Gauss map $g:S\rightarrow V$ is nondegenerate over $I_d(V)$. Then we have
$$\sum_{j=1}^q\delta^{S,M}_{g,S}(Q_j)\le \dfrac{(2N-k+1)(M+1)}{k+1}+\dfrac{(2N-k+1)M(M+1)}{2d(k+1)}.$$
\end{theorem}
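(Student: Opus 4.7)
The plan is to reduce the problem to an algebraic Nevanlinna-theoretic inequality on a compact Riemann surface by exploiting the structural consequences of finite total curvature, then apply the resulting sharp Second Main Theorem.

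First, by the classical Chern--Osserman--Huber structure theorem, a complete minimal surface $S\subset\R^m$ of finite total curvature is conformally equivalent to $\overline{S}\setminus\{p_1,\ldots,p_r\}$, where $\overline{S}$ is a compact Riemann surface of genus $\gamma$, and the Gauss map $g$ extends to a holomorphic map $\tilde g:\overline{S}\to V\subset\P^n(\C)$. In particular $\tilde g$ is algebraic, so all counting and intersection numbers on $\overline{S}$ are finite.

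Next, I would fix a basis $\{[\phi_0],\ldots,[\phi_M]\}$ of $I_d(V)$ and form the Wronskian $W=W(\phi_0(\tilde g),\ldots,\phi_M(\tilde g))$ with respect to a local parameter. Since $\tilde g$ is nondegenerate over $I_d(V)$, $W\not\equiv 0$, and the Pl\"ucker formula on the compact surface gives
$$\deg(\nu_W)=d(M+1)\deg(\tilde g^{*}\Omega)+\tfrac{M(M+1)}{2}\,\chi_{\overline{S},r},$$
where $\chi_{\overline{S},r}$ is a topological quantity depending only on $\gamma$ and $r$. The binomial coefficient $\binom{M+1}{2}=M(M+1)/2$, rather than $M(M+1)$ as in the transcendental SMT, is exactly the source of the factor $\tfrac12$ that distinguishes Theorem~\ref{1.3} from Theorem~E.

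Given any admissible collection $\{\eta_j\}$ with $\eta_j$ satisfying Condition $(*)_S$ for $Q_j$ and witnessing subharmonic function $u_j$, I would use the bound $e^{u_j}\le\|F\|^{d\eta_j}$ from $(D_1)$ together with the singularity condition $(D_2)$ to express $u_j$ as a divisor on $\overline{S}$ supported on $\tilde g^{-1}(Q_j)\cup\{p_1,\ldots,p_r\}$. A Chen--Ru--Yan-type subgeneral-position lemma for hypersurfaces on projective varieties (of the kind developed earlier in the paper in establishing the SMT on punctured disks) then yields
$$\sum_{j=1}^{q}(1-\eta_j)\,d(k+1)\deg(\tilde g^{*}\Omega)\le(2N-k+1)\deg(\nu_W).$$
Substituting the Pl\"ucker identity, dividing by $d(k+1)\deg(\tilde g^{*}\Omega)$, and taking the infimum over admissible $\{\eta_j\}$ produces the desired defect bound.

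The main technical obstacle will be tracking the behavior of the witnessing subharmonic functions $u_j$ near each puncture $p_i$, since Condition $(*)_S$ is only imposed on $A=S\setminus K$ and no explicit regularity at the $p_i$ is demanded. Using that $\tilde g$ is algebraic at each $p_i$, so that $\|F\|$ has controlled behavior $|z-p_i|^{-\alpha_i}$ for explicit $\alpha_i$ arising from the type of the end, together with $e^{u_j}\le\|F\|^{d\eta_j}$, I expect to show that $u_j$ extends as a subharmonic current on $\overline{S}$ with at most logarithmic singularities at the $p_i$; this legitimizes the integration step over $\overline{S}$ that produces the inequality above.
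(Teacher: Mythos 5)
Your route is genuinely different from the paper's. The paper argues by contradiction: from the assumed defect inequality it builds a global pseudo-metric $d\tau^2=\lambda_z^2|dz|^2$ out of the contact functions, the Wronskian $|(G_z)_{\mathcal V,M}|$ and the functions $h_{zj}=|\theta_z|^{d\eta_j}e^{u_j}$, shows via Theorems \ref{thm3.3}--\ref{thm3.5} that it is continuous with strictly negative curvature, bounds $\int_S\Omega_{d\tau^2}$ by the generalized Schwarz lemma on the universal cover together with the Poincar\'e metric of the punctured disk at each end, and then contradicts the Yau--Karp theorem, which forces $\int_Se^{w}\Omega_{ds^2}=+\infty$ for the subharmonic $w$ with $d\tau^2=e^{w}ds^2$ on the complete surface. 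Your plan is instead the algebraic one (in the spirit of Jin--Ru and of \cite{QTT}): compactify, extend $g$, and run a Pl\"ucker/Wronskian degree count on $\overline{S}$. That route can be made to work, but as written it has two genuine gaps.

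First, and most seriously, you never invoke the Chern--Osserman/Jorge--Meeks inequality $\deg(\tilde g^{*}\Omega)\ge 2\gamma-2+2r$ (equivalently, each complete end contributes at least $2$ to $-\chi$ in the Gauss--Bonnet count). After you substitute the Pl\"ucker identity and divide by $d(k+1)\deg(\tilde g^{*}\Omega)$, the topological term contributes $\frac{(2N-k+1)}{k+1}\cdot\frac{M(M+1)}{2}\cdot\frac{\chi_{\overline{S},r}}{d\deg(\tilde g^{*}\Omega)}$ (plus the truncated contributions of the $Q_j$ at the punctures themselves, which must be controlled by the Nochka bound $\sum_{j\in R}\omega_j\le k+1$); to land on the stated coefficient $\frac{M(M+1)}{2d}$ you need precisely $\chi_{\overline{S},r}\le\deg(\tilde g^{*}\Omega)$, and this is a nontrivial geometric input coming from completeness of the ends, not from the conformal structure theorem alone. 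Second, a witness $u_j$ for Condition $(*)_S$ cannot be ``expressed as a divisor'': $dd^{c}u_j$ is an arbitrary positive measure plus the point masses forced by $(D_2)$. The correct step is to integrate the current inequality $dd^{c}u_j\ge[\min\{M,\nu_{Q_j(g)}\}]-d\eta_j\sum_i m_i[p_i]$ over $\overline{S}$ (using $e^{u_j}\le\|F\|^{d\eta_j}$ and $\log\|F\|=-m_i\log|z|+O(1)$ at each end to bound the negative mass at $p_i$), obtaining $\deg[\min\{M,\nu_{Q_j(g)}\}]\le d\eta_j\deg(\tilde g^{*}\Omega)$; the diffuse part of $dd^{c}u_j$ has the favorable sign, so this works, but it must be said. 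Note also that with these repairs your argument bypasses the Yau--Karp theorem entirely, whereas the paper's method avoids any Pl\"ucker formula and any explicit use of the Chern--Osserman inequality; the two proofs are logically independent.
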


From the above theorems, we get a corollary on the ramification of the Gauss map as follows.
\begin{corollary}\label{1.4}
Let $S, V,\{Q_i\}_{i=1}^q,d,M,g,K,A$ be as in Theorem \ref{1.1}. Assume that $g$ is ramified over $Q_j$ with multiplicity at least $m_j$ on $A$ for each $j$ (i.e., $\nu_{Q_j(f)}\ge m_j$ on $\supp\nu_{Q_j(f)}\cap A$) and
$$\sum_{j=1}^q(1-\dfrac{M}{m_j})>\dfrac{(2N-k+1)(M+1)}{k+1}+\dfrac{(2N-k+1)M(M+1)}{2d(k+1)}.$$
Then $S$ has finite total curvature.

In particular, if $g$ intersects only a finite number of times the hypersurfaces $Q_1,\ldots,Q_q$ and 
$$q>\dfrac{(2N-k+1)(M+1)}{k+1}+\dfrac{(2N-k+1)M(M+1)}{2d(k+1)}$$
then $S$ must have finite total curvature.
\end{corollary}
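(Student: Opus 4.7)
My plan is to reduce Corollary \ref{1.4} to Corollary \ref{1.2}. For each $j=1,\ldots,q$ I will establish the lower bound
$$\delta^{H,M}_{g,A}(Q_j)\ge 1-\dfrac{M}{m_j};$$
summing over $j$ converts the hypothesis of Corollary \ref{1.4} into the hypothesis of Corollary \ref{1.2}, whence the conclusion that $S$ has finite total curvature.

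To produce the bound I will exhibit $\eta=M/m_j$ as an admissible value in the infimum defining $\delta^{H,M}_{g,A}(Q_j)$. Let $F$ be a reduced representation of $g$, write $d_j=\deg Q_j$ (so that $d/d_j$ is a positive integer, since $d=\mathrm{lcm}(d_1,\ldots,d_q)$), and set
$$u(z):=\dfrac{Md}{m_j d_j}\log|Q_j(F)(z)|-C,$$
with $C$ chosen so that $e^u\le\|F\|^{dM/m_j}$ on $A$; such a $C$ exists because the quotient $|Q_j(F)|^{d/d_j}/\|F\|^d$, being a well-defined continuous function on $\P^n(\C)$, is uniformly bounded. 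Because $g$ is nondegenerate over $I_d(V)$, $Q_j(F)\not\equiv 0$, so $u$ is a continuous $[-\infty,\infty)$-valued subharmonic function on $A$, harmonic on $A\setminus g^{-1}(Q_j)$; this verifies $(D_1)$. For $(D_2)$, at any $\xi\in g^{-1}(Q_j)\cap A$ write $Q_j(F)(z)=(z-\xi)^{\nu}h(z)$ with $h(\xi)\ne 0$ and $\nu=\nu_{Q_j(g)}(\xi)\ge m_j$; then
$$u(z)-\min\{\nu,M\}\log|z-\xi|=\left(\dfrac{M\nu d}{m_j d_j}-\min\{\nu,M\}\right)\log|z-\xi|+O(1).$$
Since $\nu\ge m_j$ and $d\ge d_j$ one has $M\nu d/(m_j d_j)\ge M\ge\min\{\nu,M\}$, so the coefficient of $\log|z-\xi|$ is non-negative and the limit exists in $[-\infty,\infty)$. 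Hence $\eta=M/m_j$ satisfies $(*)_H$, giving the claimed inequality.

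For the ``in particular'' assertion, suppose $g$ meets each $Q_j$ only finitely many times, and enlarge $K$ to contain every intersection point of $g$ with $Q_1\cup\cdots\cup Q_q$. Then $g^{-1}(Q_j)\cap A=\varnothing$ for every $j$, condition $(D_2)$ is vacuous, and choosing $u\equiv -1$ with $\eta=0$ (so that $\|F\|^{d\eta}\equiv 1$ and $e^u\le 1$) verifies $(*)_H$; thus $\delta^{H,M}_{g,A}(Q_j)=1$ for each $j$. The hypothesis $q>\frac{(2N-k+1)(M+1)}{k+1}+\frac{(2N-k+1)M(M+1)}{2d(k+1)}$ then supplies exactly the inequality required by Corollary \ref{1.2}. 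No step is particularly difficult; the only place that requires attention is the sign of the $\log|z-\xi|$ coefficient in $(D_2)$, which is ensured by combining the multiplicity assumption $\nu\ge m_j$ with the divisibility $d_j\mid d$.
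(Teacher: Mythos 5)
Your proposal is correct and follows the same route as the paper: reduce to Corollary \ref{1.2} via the lower bound $\delta^{H,M}_{g,A}(Q_j)\ge 1-M/m_j$ (respectively $\delta^{H,M}_{g,A}(Q_j)=1$ when the intersections are finite and absorbed into $K$). The only difference is that the paper simply asserts this defect inequality, whereas you verify it by exhibiting the explicit subharmonic function $u=\frac{Md}{m_jd_j}\log|Q_j(F)|-C$ and checking $(D_1)$, $(D_2)$ — a worthwhile addition, and your sign check on the coefficient of $\log|z-\xi|$ is the right point to be careful about.
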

Then, this corollary immediately implies Theorem C mentioned above of X. Mo.
\begin{proof}
Indeed, since $g$ is ramified over $Q_j$ with multiplicity at least $m_j$ on $A$ for each $j$, we have
$$ \delta^{H,M}_{g,S}\ge 1-\dfrac{M}{m_j}\ (1\le j\le q).$$
Then
$$\sum_{j=1}^q\delta^{H,M}_{g,A}(Q_j)>\dfrac{(2N-k+1)(M+1)}{k+1}+\dfrac{(2N-k+1)M(M+1)}{2d(k+1)}.$$
By Corollary \ref{1.2}, $S$ has finite total curvature.
\end{proof}
Now, combining Corollary \ref{1.2} and Theorem \ref{1.3}, we may prove the following.
\begin{corollary}\label{1.6}
Let $S, V,\{Q_i\}_{i=1}^q,d,M,g$ be as in Theorem \ref{1.1}. Then we have
$$\sum_{j=1}^q\delta^{H,M}_{g,S}(Q_j)\le \dfrac{(2N-k+1)(M+1)}{k+1}+\dfrac{(2N-k+1)M(M+1)}{2d(k+1)}.$$
In particular, the image of $S$ under $g$ cannot intersects only a finite number of times with hyperplanes $Q_1,\ldots,Q_q$ if $q>\dfrac{(2N-k+1)(M+1)(M+2d)}{2d(k+1)}$.
\end{corollary}
\begin{proof}
Suppose contrarily that
$$\sum_{j=1}^q\delta^{H,M}_{g,S}(Q_j)> \dfrac{(2N-k+1)(M+1)}{k+1}+\dfrac{(2N-k+1)M(M+1)}{2d(k+1)}.$$
By Theorem \ref{1.2}, this yields that $S$ has finite total curvature. Applying Theorem \ref{1.3}, we have
$$\sum_{j=1}^q\delta^{S,M}_{g,S}(Q_j)\le \dfrac{(2N-k+1)(M+1)}{k+1}+\dfrac{(2N-k+1)M(M+1)}{2d(k+1)}.$$
Since $\delta^{H,M}_{g,S}(Q_j)\le \delta^{S,M}_{g,S}(Q_j)$, we obtain
$$\sum_{j=1}^q\delta^{H,M}_{g,S}(Q_j)\le \dfrac{(2N-k+1)(M+1)}{k+1}+\dfrac{(2N-k+1)M(M+1)}{2d(k+1)}.$$
This contradicts the supposition.
The desired inequality of the corollary must hold.
\end{proof}

As we known, the first results on the value distribution of the Gauss map with hypersurfaces are given by S. D.Quang, D. D. Thai, P. D. Thoan in \cite{QTT}. However, they only consider the case of algebraic complete minimal surfaces. In their result, they assumed that the Gauss map is algebraic nondegenerate. Actually, from their proof, they only need the condition that the Gauss map is nondegenerate over $I_d(V)$. For the convenience, we state here that result.

\noindent
{\bf Theorem F} (S. D. Quang \textit{et all.} \cite[Theorem 1.2]{QTT}, with corrected statement). {\it Let $x:S\rightarrow\R^m$ be a non-flat complete regular minimal surface with finite total curvature and let $V$ be a projective subvariety of $\P^{m-1}(\C)$ of dimension $k$. Let $G:S\rightarrow\P^{m-1}(\C)$ be its generalized Gauss map. Let $\{Q_i\}_{i=1}^q$ be hypersurfaces of $\mathbb P^{m-1}(\mathbb C)$ in $N$-subgeneral position with respect to $V$ with $\deg Q_i=d_i$. Let $d=lcm (d_1,\ldots ,d_q)$. Assume that  $G$ is nondegenerate over $I_d(V)$ and is ramified over hypersurfaces $Q_j$ with multiplicity at least  $m_j$ for each $j$. Then, we have
$$ \sum_{i=1}^q\left (1-\dfrac{H_V(d)-1}{m_j}\right )< \frac{(2N-k+1)H_V(d)(H_V(d)+2d-1)}{2d(k+1)}.$$}

We now give an example to show the existance of the Gauss map fulfilled the above assumption of Theorem F with $\dim V\ge 2$. 
\begin{example}{\rm
Let $S=\C=\P^1(\C)\setminus{\infty}$. Consider the following monomials on $S$:
$$ f_j=z^{n_j},g_j=\sqrt{-1}z^{n_j}\ \forall 1\le j\le n\ (n\ge 3),$$
where $n_1=0,\ldots,n_k=1+2(n_{1}+\cdots+n_{k-1})$ for all $k=2,\ldots,n$. We see that 
$$f_1^2+g_1^2+\cdots+f_n^2+g_n^2=0.$$
By Theorem 1.2.5 in \cite{Fu93}, there exists a minimal surface $x=(x_1,\ldots,x_{2n}):S\rightarrow R^{2n}$ with the Gauss map is given by
$$ G=(f_1:g_1:f_2:g_2:\cdots:f_n:g_n):S\rightarrow\P^{2n-1}(\C),$$
which extends holomorphically over $\P^1(\C)$, and the induced metric is given by
$$ ds^2=2(|\omega_1|^2+\cdots+|\omega_{2n}|^2), $$
where $\omega_{2k-1}=f_kdz,\omega_{2k}=g_kdz.$ We denote by $(x_1:y_1:x_2:y_2:\cdots:x_n:y_n)$ the homogeneous coordinates on $\P^{2n-1}(\C)$. Let $Q$ be an arbitrary hypersurface of degree 2 with the defining homogeneous polynomials (denoted again by $Q$) of the form
$$ Q=\sum_{1\le i\le j\le n}(a_{ij}x_ix_j+b_{ij}x_iy_j+c_{ij}y_iy_j).$$
Then
$$ Q(G)=\sum_{1\le i\le j\le n}(a_{ij}+\sqrt{-1}b_{ij}-c_{ij})z^{n_i+n_j}.$$
It is easy to see that $\{z^{n_i+n_j};1\le i\le j\le n\}$ is linearly independent. Therefore, $Q(G)\equiv 0$ if and only if $a_{ij}+\sqrt{-1}b_{ij}-c_{ij}=0$ for all $1\le i\le j\le n$.
Denote by $V$ the intersection of all hypersurfaces $Q$ of degree 2 such that the image of $G$ is contained in $Q$. Then $G:S\rightarrow V$ is nondegenerate over $I_2(V)$. It is clear that $V$ contains all point $(z_1:\sqrt{-1}z_1:z_2:\sqrt{-1}z_2:\cdots:z_n:\sqrt{-1}z_n)$ for every $z_1,\ldots,z_n$ not all zero. Hence, $\dim V\ge n-1$.}
\end{example}

Here, Corollary \ref{1.4} generalizes Theorem F to the case of complete minimal surfaces which may not be algebraic. Our results in this paper are the initial results for the general case of complete minimal surfaces, which generalize for previous results in this area, for instance see \cite{Ha18a,Ha18b,HTP,Fu83,Fu90,M94,RJ}. The results and method of this paper have been applied by C. Lu and X. Chen to study the unicity of Gauss maps sharing hypersurfaces in projective spaces \cite{LC}.

\section{Defect relation of holomorphic curves from a punctured disk into a projective variety with a family of hypersurfaces}

\noindent
{\bf A. Nevanlinna functions.}\ For each $s>0$, we define the punctured disk
$$\Delta_{s,\infty}=\{z\in\C\ |\ s\le |z|<\infty\}.$$
In this section, we always assume that functions and mappings from $\Delta_{s,\infty}$ are defined on an open neighborhood of $\Delta_{s,\infty}$ in $\C$.

For a divisor $\nu$ on an open neighborhood of $\Delta_{s,\infty}$ and for a positive integer $p$ or $p= \infty$, we define the counting function of $\nu$ by
\begin{align*}
\nu^{[p]}(z)&=\min\ \{p,\nu(z)\},\\
n(t,s)& =\sum\limits_{s\le |z|\leq t} \nu (z).
\end{align*}
Similarly, we define $n^{[p]}(t,s).$
The counting function of $\nu$ is defined by
$$ N(r,s,\nu)=\int\limits_{s}^r \dfrac {n(t,s)}{t}dt \quad (s<r<\infty).$$
Similarly, define  \ $N(r,s,\nu^{[p]})$ and denote it by \ $N^{[p]}(r,s,\nu)$.

Let $\varphi : \Delta_{s,\infty}\longrightarrow \C$ be a meromorphic function. Denote by $\nu^0_\varphi$ the zero divisor of $\varphi$. Define
$$N_{\varphi}(r,s)=N(r,s,\nu^0_{\varphi}), \ N_{\varphi}^{[p]}(r,s)=N^{[p]}(r,s,\nu^0_{\varphi}).$$
For brevity, we will omit the character $^{[p]}$ if $p=\infty$. The proximity function of $\varphi$ (with respect to the point $\infty$) is defined by
$$ m(r,s,\varphi)=\int\limits_{0}^{2\pi}\log^+|\varphi(re^{i\theta})|\dfrac{d\theta}{2\pi}-\int\limits_{0}^{2\pi}\log^+|\varphi(se^{i\theta})|\dfrac{d\theta}{2\pi}\ (s<r<\infty),$$
where $\log^+x=\log(\max\{0,x\})$ for a real number $x$. The Nevanlinna's characteristic function of $\varphi$ is defined by
$$ T(r,s,\varphi):=m(r,s,\varphi)+N_{1/\varphi}(r,s).$$

Throughout this paper, we fix a homogeneous coordinates system $(x_0:\cdots :x_n)$ on $\P^n(\C)$. Let $f : \Delta_{0,\infty} \longrightarrow \P^n(\C)$ be a holomorphic curve with a reduced representation $F = (f_0, \ldots,f_n)$ on $\Delta_{s,\infty}$, which means $f_0,\ldots,f_n$ are holomorphic functions on $\Delta_{s,\infty}$ without common zeros. We set 
$$\|F\|=(|f_0|^2+\cdots+|f_n|^2)^{1/2}.$$
The characteristic function of $f$ is defined by 
$$ T_f(r,s)=\int_{0}^{2\pi}\log\|F(re^{i\theta})\|\dfrac{d\theta}{2\pi}-\int_{0}^{2\pi}\log\|F(se^{i\theta})\|\dfrac{d\theta}{2\pi}\ (s<r<R). $$
For a meromorphic function $\varphi$, if we consider $\varphi$ as a holomorphic curve into $\P^1(\C)$, then
$$ T_\varphi(r,s)=T(r,s,\varphi)+O(1).$$

\vskip0.2cm
\noindent
{\bf B. Auxiliary results.}\ We need the following.

\begin{proposition}[{cf. \cite[Proposition 4.5]{Fu85}}]\label{pro2.1}
Let $f_0,\ldots ,f_n$ be holomorphic functions on $\Delta_{s,\infty}$ such that $\{f_0,\ldots ,f_n\}$ are  linearly independent over $\C.$ Then the Wronskian 
$$ W(f_0,\ldots,f_n)=\det\left(f_j^{(i)}\right)_{0\le i,j\le n},$$ 
where $f_j^{(i)}$ is the $i^{th}$-derivative of $f_j$, satisfying:

(i)\  $W(f_0,\ldots,f_n)\not\equiv 0.$ 

(ii) $W(hf_0,\ldots,hf_n)=h^{n+1}W(f_0,\ldots,f_n)$ for any meromorphic function $h$ on $\Delta_{s,\infty}.$
\end{proposition}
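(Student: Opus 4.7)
My plan is to treat the two assertions separately. Part (ii) is a one-line computation via Leibniz's rule; part (i) is the substantive statement, which I would prove by induction on $n$.

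For (ii), Leibniz's rule gives $(hf_j)^{(i)}=\sum_{l=0}^{i}\binom{i}{l}h^{(i-l)}f_j^{(l)}$, which exhibits the new Wronskian matrix as a product $M\cdot(f_j^{(i)})_{i,j}$, where $M=\bigl(\binom{i}{l}h^{(i-l)}\bigr)_{0\le i,l\le n}$ is lower triangular with each diagonal entry equal to $h$. Taking determinants gives $\det M=h^{n+1}$, which yields the claimed identity $W(hf_0,\ldots,hf_n)=h^{n+1}W(f_0,\ldots,f_n)$.

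For (i), I argue by induction on $n$, the base case $n=0$ being trivial since $W(f_0)=f_0$. Suppose $W(f_0,\ldots,f_n)\equiv 0$ while $\{f_0,\ldots,f_n\}$ is linearly independent over $\C$; after relabeling I may assume $f_0\not\equiv 0$. The induction hypothesis rules out $W(f_0,\ldots,f_{n-1})\equiv 0$, since otherwise $\{f_0,\ldots,f_{n-1}\}$ would already be $\C$-linearly dependent, contradicting independence of the larger family. The crux is to interpret the vanishing of the full Wronskian as a linear ODE: the differential operator
$$L(y):=\det\begin{pmatrix} f_0 & \cdots & f_{n-1} & y \\ f_0' & \cdots & f_{n-1}' & y' \\ \vdots & & \vdots & \vdots \\ f_0^{(n)} & \cdots & f_{n-1}^{(n)} & y^{(n)} \end{pmatrix}$$
expanded along its last column reads $L(y)=W(f_0,\ldots,f_{n-1})\,y^{(n)}+\cdots$, an $n$-th order linear operator whose leading coefficient does not vanish identically, and each of $f_0,\ldots,f_n$ satisfies $L(f_j)=0$ (for $j<n$ because of a repeated column, and for $j=n$ by the standing hypothesis $W(f_0,\ldots,f_n)\equiv 0$).

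To close the argument I would restrict to any small disk $U\subset\Delta_{s,\infty}$ on which $W(f_0,\ldots,f_{n-1})$ is zero-free. On $U$, the equation $L(y)=0$ is a genuine $n$-th order linear ODE with holomorphic coefficients, so its space of holomorphic solutions is $n$-dimensional over $\C$; it contains the $n+1$ functions $f_0,\ldots,f_n$, forcing a nontrivial $\C$-linear relation on $U$ that then propagates to all of $\Delta_{s,\infty}$ by the identity principle, contradicting linear independence. The only subtlety worth flagging, and it is a mild one, is that the non-simply-connectedness of $\Delta_{s,\infty}$ plays no essential role: the existence/uniqueness theorem for linear ODEs is applied only on the simply connected subdomain $U$, and single-valued holomorphic linear relations spread from $U$ to the full annulus by analytic continuation.
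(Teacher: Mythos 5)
Your proof is correct. The paper itself gives no argument for this proposition --- it is simply quoted from Fujimoto's 1985 paper --- so there is nothing to compare against line by line; but both parts of your argument are the standard ones and are complete. The Leibniz/triangular-matrix computation for (ii) is exactly right (with the usual understanding that the identity is first verified off the zeros and poles of $h$ and then extends by the identity theorem for meromorphic functions), and the inductive ODE argument for (i) is the classical proof: the only delicate points are the ones you already flag, namely that the $n$-dimensionality of the solution space of $L(y)=0$ is invoked only on a small disk where the leading coefficient $W(f_0,\ldots,f_{n-1})$ is zero-free, and that the resulting linear relation propagates to the whole (connected) neighborhood of $\Delta_{s,\infty}$ by the identity principle. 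No gaps.
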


\begin{lemma}[{cf. \cite[Lemma 2.2]{No81}}]\label{lem2.2}
Let $f:\Delta_{s,\infty}\rightarrow\P^n(\C)$ be
 a holomorphic curve. We have 
$$ \biggl\| \quad m\left(r,s,\dfrac{f^{(k)}}{f}\right)\le O(\log T_f(r,s))+O(\log r).$$
\end{lemma}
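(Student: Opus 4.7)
The statement is the logarithmic derivative lemma for holomorphic curves into $\P^n(\C)$ on a punctured disk $\Delta_{s,\infty}$, with $\|$ denoting that the estimate holds outside an exceptional set of finite Lebesgue measure in $r$. Since it is cited from Noguchi's paper, the plan is to reproduce the standard Nevanlinna-type argument, adapted to the annular domain.

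First I would reduce the statement for a holomorphic curve to one for meromorphic functions. Fixing a reduced representation $F=(f_0,\ldots,f_n)$, the ``logarithmic derivative'' $f^{(k)}/f$ should be read componentwise: for each ratio $\varphi_{ij}=f_i/f_j$ (a meromorphic function on $\Delta_{s,\infty}$), one shows $m(r,s,\varphi_{ij}^{(k)}/\varphi_{ij})\le O(\log T_{\varphi_{ij}}(r,s))+O(\log r)$, and then uses $T_{\varphi_{ij}}(r,s)\le T_f(r,s)+O(1)$ to conclude. This reduction is standard and handles the higher-order case by an elementary iteration $\varphi^{(k)}/\varphi=(\varphi^{(k)}/\varphi^{(k-1)})(\varphi^{(k-1)}/\varphi)$ together with an application of the first-derivative case to $\varphi^{(k-1)}$.

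Next I would prove the scalar inequality by the Poisson--Jensen formula on annular regions. On the annulus $s\le|z|\le r$, one expresses $\log|\varphi(z)|$ as the sum of a boundary integral (over the two circles $|w|=s$ and $|w|=r$) and contributions from the zeros and poles of $\varphi$ inside the annulus. Differentiating under the integral sign with respect to $z$ produces a representation of $\varphi'(z)/\varphi(z)$ as a sum of a smooth boundary term and rational terms $1/(z-a)$ over zeros/poles $a$. Estimating $\log^+|\varphi'/\varphi|$ by the standard convexity/calculus estimates, and then integrating over $|z|=r$, the boundary contributions are controlled by $T_\varphi(r,s)+T_\varphi(s)$, while the rational terms are handled using the elementary bound $\int_0^{2\pi}\log^+\frac{1}{|re^{i\theta}-a|}d\theta\le O(\log r)$. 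This gives a first inequality of the form $m(r,s,\varphi'/\varphi)\le C\log^+T_\varphi(\rho,s)+C\log\rho$ for any $\rho>r$.

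Finally, to pass from the auxiliary radius $\rho$ back to $r$ I would invoke the Borel growth lemma, which allows the replacement $\rho\mapsto r$ at the cost of an exceptional set $E\subset[s,\infty)$ of finite Lebesgue measure, exactly the content of the symbol $\|$ in the statement. The main technical obstacle is the annular (rather than disk) setting: one must be careful to track both boundary circles in the Poisson--Jensen representation and to verify that the subtracted term at $|w|=s$ is absorbed into the $O(\log r)$ remainder, rather than producing an uncontrolled constant depending on the singular behavior of $\varphi$ near infinity. Once this bookkeeping is done correctly, the inequality for $k=1$ follows, and the general $k$ case is immediate by induction as explained above.
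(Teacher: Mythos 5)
The paper itself gives no proof of this lemma; it is quoted verbatim from Noguchi \cite{No81}, so there is no in-paper argument to compare against. Your outline is the standard proof of the lemma on logarithmic derivatives adapted to a neighbourhood of $\infty$, and it is essentially the argument underlying the cited result: reduce to coordinate ratios $\varphi=f_i/f_j$ with $T(r,s,\varphi)\le T_f(r,s)+O(1)$, run Poisson--Jensen on the annulus $s\le|z|\le r$ to represent $\varphi'/\varphi$, estimate the boundary and zero/pole contributions by $\log^+T_\varphi(\rho,s)+\log\rho$, and remove the auxiliary radius by the Borel growth lemma at the cost of the exceptional set encoded in $\|$. Two points deserve to be made explicit rather than waved at. First, in the induction on $k$ the factorization $\varphi^{(k)}/\varphi=(\varphi^{(k)}/\varphi^{(k-1)})(\varphi^{(k-1)}/\varphi)$ only closes once you know $T(r,s,\varphi^{(k-1)})=O(T(r,s,\varphi))+O(\log r)$ outside an exceptional set; this follows from $T(r,s,\varphi')\le 2T(r,s,\varphi)+m(r,s,\varphi'/\varphi)+O(1)$ together with the already-established first-derivative case, and should be stated. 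Second, the Poisson--Jensen formula on an annulus is genuinely more delicate than on a disk (the Green's function of the annulus is not elementary), and the cleaner route --- the one effectively taken in the literature for $\Delta_{s,\infty}$ --- is to work on the full disk $|z|<r$ inside the neighbourhood of $\Delta_{s,\infty}$ on which $\varphi$ is defined and absorb the inner boundary circle $|w|=s$ into the $O(1)$ normalization built into the definitions of $m(r,s,\cdot)$ and $N(r,s,\cdot)$; this is exactly the bookkeeping you flag, and once it is carried out your sketch is correct.
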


\begin{lemma}[{cf. \cite[Lemma 3]{QA}}]\label{lem2.3}
Let $V$ be a complex projective subvariety of $\P^n(\C)$ of dimension $k\ (k\le n)$. Let $Q_1,...,Q_q$ be $q\ (q>2N-k+1)$ hypersurfaces in $\P^n(\C)$ in $N$-subgeneral position with respect to $V$ of the common degree $d.$ Then, there are positive rational constants $\omega_i\ (1\le i\le q)$ satisfying the following:

i) $0<\omega_i \le 1\  \forall i\in\{1,...,q\}$,

ii) Setting $\tilde \omega =\max_{j\in Q}\omega_j$, one gets
$$\sum_{j=1}^{q}\omega_j=\tilde \omega (q-2N+k-1)+k+1.$$

iii) $\dfrac{k+1}{2N-k+1}\le \tilde\omega\le\dfrac{k}{N}.$

iv) For each $R\subset \{1,...,q\}$ with $\sharp R = N+1$, then $\sum_{i\in R}\omega_i\le k+1$.

v) Let $E_i\ge 1\ (1\le i \le q)$ be arbitrarily given numbers. For each $R\subset \{1,...,q\}$ with $\sharp R = N+1$,  there is a subset $R^o\subset R$ such that $\sharp R^o=\rank \{Q_i\}_{i\in R^o}=k+1$ and 
$$\prod_{i\in R}E_i^{\omega_i}\le\prod_{i\in R^o}E_i.$$
\end{lemma}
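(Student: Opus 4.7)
The plan is to construct the weights $\omega_i$ by adapting Nochka's classical weight construction for hyperplanes in subgeneral position to the present setting of hypersurfaces restricted to a projective subvariety. The key combinatorial object is a rank function $\sigma$ on subsets of $\{1,\ldots,q\}$: for nonempty $S$, set $\sigma(S):=k+1-\dim\bigl(V\cap\bigcap_{i\in S}Q_i\bigr)$ (with the convention that the empty intersection has dimension $-1$). The $N$-subgeneral position hypothesis is then equivalent to $\sigma(S)=k+1$ whenever $|S|\ge N+1$. Among nonempty subsets $S$ with $\sigma(S)\le k$, I would pick an extremal $S_0$ maximising the ratio
\begin{equation*}
\theta(S):=\frac{|S|-\sigma(S)}{k+1-\sigma(S)},
\end{equation*}
and take $\tilde\omega$ to be the associated slope that makes the block $S_0$ saturated in the ensuing construction.

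With $S_0$ and $\tilde\omega$ in hand, the weights are defined by an inductive procedure: assign the value $\tilde\omega$ to every index in $S_0$, pass to the reduced family $\{Q_i\}_{i\notin S_0}$ together with its induced combinatorial data, and repeat the extremal selection. After finitely many steps, every remaining subfamily becomes ``in general position on $V$'' and the corresponding indices receive weight $1$. All $\omega_i$ are rational since $\sigma$ and $|S|$ take integer values, and $0<\omega_i\le 1$ is automatic, giving (i); property (ii) is verified by a direct induction on the depth of the recursion, tallying how many indices receive each value and using $q-(2N-k+1)$ as the number of ``free'' indices carrying weight $1$.

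Properties (iii) and (iv) are formal consequences of the extremal construction. For (iv): if $R$ has $N+1$ elements, then $\sigma(R)=k+1$ by hypothesis, and the extremality of $S_0$ gives
\begin{equation*}
\sum_{i\in R}\omega_i\le\tilde\omega\,\sigma(R)=\tilde\omega(k+1)\le k+1.
\end{equation*}
The lower bound $(k+1)/(2N-k+1)\le\tilde\omega$ in (iii) follows by combining (ii) with $\omega_j\le 1$ for all $j$, while the upper bound $\tilde\omega\le k/N$ is Nochka's classical inequality, which I would recover from $\theta(S_0)$ by controlling $|S_0|$ via the subgeneral position when $S_0$ is extended by an extra index.

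The technical heart of the lemma, and the main obstacle, is property (v). Given $R=\{i_1,\ldots,i_{N+1}\}$ with $E_{i_1}\ge\cdots\ge E_{i_{N+1}}\ge 1$, the $N$-subgeneral position gives $\sigma(R)=k+1$, so I would extract $R^\circ\subset R$ with $|R^\circ|=k+1$ and $\mathrm{rank}\{Q_i\}_{i\in R^\circ}=k+1$ by a greedy scheme: at each stage pick the index $i\in R$ with the largest remaining $E_i$ whose class $[Q_i]$ does not lie in the span of the classes already selected. Nonemptiness of the candidate set at each stage is ensured by $\sigma(R)=k+1$, so the scheme terminates with $|R^\circ|=k+1$. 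To conclude $\prod_{i\in R}E_i^{\omega_i}\le\prod_{i\in R^\circ}E_i$, I would run a rearrangement argument: because each $E_i\ge 1$, inequality (iv) together with the ordering of the $E_i$'s and $\omega_i\le 1$ implies that shifting all the weight from $R\setminus R^\circ$ onto the larger $E_i$'s in $R^\circ$ only enlarges the product, and the resulting extremal configuration carries weight $1$ on $R^\circ$ and weight $0$ elsewhere. Making this transfer rigorous while simultaneously respecting the rank condition that defines $R^\circ$ and the weight bound (iv) is the main delicate point of the proof.
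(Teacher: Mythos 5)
The paper does not prove this lemma at all: it is quoted verbatim from \cite[Lemma 3]{QA}, so there is no in-paper argument to compare against; I am therefore judging your sketch against the known Nochka-type proof. Your overall strategy (a Nochka weight construction driven by an extremal rank ratio) is the right one, but the sketch has a genuine gap exactly where you flag it, in property (v), and the way you propose to close it cannot work. Knowing only (iv) --- the single inequality $\sum_{i\in R}\omega_i\le k+1$ for $\sharp R=N+1$ --- together with $\omega_i\le\tilde\omega$ is \emph{not} enough to get $\prod_{i\in R}E_i^{\omega_i}\le\prod_{i\in R^o}E_i$. For instance, with $k=1$, $N=2$, if $[Q_1]$ and $[Q_2]$ are proportional and $\omega_1=\omega_2=0.6$, $\omega_3\le 0.8$, then (i)--(iv) can all hold, yet taking $E_1=E_2=E$ large and $E_3=1$ forces every admissible $R^o$ to contain the index $3$, so $\prod_{i\in R^o}E_i=E$ while $\prod_{i\in R}E_i^{\omega_i}\ge E^{1.2}$. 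What (v) actually requires is the full Nochka sub-additivity $\sum_{i\in S}\omega_i\le\rank\{Q_i\}_{i\in S}$ for \emph{every} subset $S\subset R$ (this is the real content of the weight lemma), combined with the greedy selection of $R^o$ in decreasing order of the $E_i$ and an Abel-summation over the filtration $S_l=\{i\in R:\ \rank\{Q_j\}_{j\le i}\le l\}$. Your ``rearrangement'' that shifts weight from $R\setminus R^o$ onto $R^o$ is precisely the step that fails without that family of inequalities.

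Two further points in the construction itself need repair. First, your tally for (ii) assigns weight $1$ to the ``free'' indices; but $\tilde\omega=\max_j\omega_j\le k/N<1$ whenever $N>k$, so the free indices must receive $\tilde\omega$, not $1$ (the smaller weights live inside the extremal sets), and the identity $\sum_j\omega_j=\tilde\omega(q-2N+k-1)+k+1$ comes out of that bookkeeping. Second, the displayed derivation of (iv), $\sum_{i\in R}\omega_i\le\tilde\omega\,\sigma(R)$, is a non sequitur; the correct elementary route is $\sum_{i\in R}\omega_i\le(N+1)\tilde\omega\le(N+1)k/N\le k+1$, but again this weak form of (iv) is insufficient for (v). (There is also a harmless off-by-one in your $\sigma$: with $\dim\varnothing=-1$ your formula gives $\sigma(S)=k+2$, not $k+1$, on empty intersections.)
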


We know the following characterization of a removable singularity
 (cf.\ \cite{No81}).
\begin{lemma}\label{lem2.4}
Let $f$ be a holomorphic curve from $\Delta_{s,\infty}$ into a projective subvariety $V$ of $\P^n(\C)$. Then $f$ extends holomorphically   in a neighborhood of $\infty$ if and only if
 $$\liminf_{r \to \infty} T_f(r,s)/(\log r) < \infty.$$
 \end{lemma}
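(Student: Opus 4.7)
My plan is to prove the two implications separately. The forward direction is a direct calculation with a reduced representation near $\infty$, while the reverse direction combines the First Main Theorem with a big-Picard-type extension result for holomorphic maps into a complement of hyperplanes in $\P^n(\C)$.

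For $(\Rightarrow)$, assume that $f$ extends holomorphically across $\infty$. I would pick a reduced representation $\widetilde F$ of the extended map on a neighborhood $U$ of $\infty$ with $\widetilde F(\infty)\ne 0$, so that $\log\|\widetilde F\|$ is bounded on $U$. On the overlap $\Delta_{s,\infty}\cap U$, the given global reduced representation $F$ differs from $\widetilde F$ by a nowhere-zero holomorphic function $h$, which extends meromorphically across $\infty$ with some integer order $p$. Since $\log|h|$ is harmonic on $\Delta_{R,\infty}$ for large $R$, Jensen's formula gives
$$\dfrac{1}{2\pi}\int_0^{2\pi}\log|h(re^{i\theta})|\,d\theta=p\log r+O(1),$$
and therefore $T_f(r,s)=p\log r+O(1)=O(\log r)$; monotonicity of $T_f$ forces $p\ge 0$.

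For $(\Leftarrow)$, I would first upgrade the $\liminf$ hypothesis to a genuine upper bound: since $\log\|F\|$ is subharmonic on $\Delta_{s,\infty}$, its circular mean, and hence $T_f(r,s)$, is convex and increasing as a function of $\log r$, so $T_f(r,s)/\log r$ has a limit as $r\to\infty$ equal to the $\liminf$, giving $T_f(r,s)=O(\log r)$. Next, choose $2n+1$ hyperplanes $H_1,\dots,H_{2n+1}$ of $\P^n(\C)$ in general position with $V\not\subset H_i$, and let $L_i$ be their defining linear forms. The First Main Theorem on the annulus, obtained by applying Jensen's identity to $L_i(F)/\|F\|$, yields $N_{L_i(F)}(r,s)\le T_f(r,s)+O(1)=O(\log r)$. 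The elementary inequality $N(r,s,\nu)\ge n(t_0,s)\log(r/t_0)$ valid for $r>t_0$ then forces each counting function $n(t_0,s,\nu^0_{L_i(F)})$ to be bounded independently of $t_0$, so each $L_i(F)$ has only finitely many zeros on $\Delta_{s,\infty}$. Consequently, for some $R\gg s$ the map $f$ sends $\{|z|>R\}$ into $\P^n(\C)\setminus\bigcup_{i=1}^{2n+1}H_i$, a set that is Kobayashi hyperbolically imbedded in $\P^n(\C)$. The generalized big Picard theorem then provides the desired holomorphic extension of $f$ across $\infty$.

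The main conceptual ingredient is the hyperbolic imbedding of the complement of $2n+1$ hyperplanes in general position together with the associated removable singularity theorem (Kiernan--Kobayashi, Fujimoto); everything else is standard Nevanlinna-theoretic bookkeeping with Jensen's formula, the First Main Theorem in punctured-disk form, and basic monotonicity properties of $N$ and $T_f$.
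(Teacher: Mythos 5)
Your argument is essentially correct, but note that the paper does not prove this lemma at all: it is quoted from Noguchi's 1981 paper as a known characterization of removable singularities, so there is no internal proof to compare against. Your forward direction is fine once one drops the unnecessary (and in general false) claim that the transition function $h$ extends meromorphically across $\infty$ — a nowhere-zero holomorphic function on a punctured neighborhood of $\infty$ need not (e.g.\ $e^z$); what you actually use, namely that the circular mean of the harmonic function $\log|h|$ equals $p\log r+O(1)$ with $p$ the winding number, is true and suffices. In the converse, the upgrade from $\liminf$ to $O(\log r)$ via convexity of $T_f$ in $\log r$ and monotonicity of chord slopes from the point $(\log s,0)$ is correct (the word ``increasing'' is not needed and not always true), and the Jensen/counting-function step is sound, although on an annulus the First Main Theorem carries an extra $c\log r$ term from the flux of the harmonic part through $|z|=s$, so the correct bound is $N_{L_i(F)}(r,s)\le T_f(r,s)+O(\log r)$; this still yields $n(t_0,s)=O(1)$.

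The one point you should repair is the choice of hyperplanes: requiring $V\not\subset H_i$ does not prevent $L_i(F)\equiv 0$, since a degenerate $f$ may map into $V\cap H_i$, in which case the ``finitely many zeros'' conclusion is vacuous and $f$ does not avoid $H_i$ near $\infty$. Dispose of constant $f$ trivially, and for non-constant $f$ choose the $2n+1$ hyperplanes in general position and not containing the (positive-dimensional) Zariski closure of the image, which is possible since the hyperplanes containing a fixed positive-dimensional variety form a proper linear subspace of the dual projective space. With that fix, Green's theorem on the hyperbolic imbedding of $\P^n(\C)\setminus\bigcup_{i=1}^{2n+1}H_i$ and the Kiernan--Kobayashi extension theorem complete the proof as you say. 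This route is correct but heavier than the standard one (and than Noguchi's): since $T_f(r,s)=O(\log r)$, each ratio $f_i/f_j$ is a meromorphic function on $\Delta_{s,\infty}$ whose Nevanlinna characteristic is $O(\log r)$, hence extends meromorphically (indeed rationally in $1/z$) across $\infty$, which gives the extension of $f$ directly without any hyperbolicity input.
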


Let $\{Q_i\}_{i\in R}$ be a family of hypersurfaces in $\P^n(\C)$ of the common degree $d$. Assume that each $Q_i$ is defined by
$$ \sum_{I\in\mathcal I_d}a_{iI}x^I=0, $$
where $\mathcal I_d=\{(i_0,...,i_n)\in \mathbb N_0^{n+1}\ :\ i_0+\cdots + i_n=d\}$, $I=(i_0,...,i_n)\in\mathcal I_d,$ $x^I=x_0^{i_0}\cdots x_n^{i_n}$ and $(x_0:\cdots: x_n)$ is a homogeneous coordinate of $\P^n(\C)$.

Let $f:\Delta_{s,\infty}\rightarrow V\subset\P^n(\C)$ be a meromorphic mapping into $V$ with a reduced representation $F=(f_0,\ldots,f_n)$. We define
$$ Q_i(F)=\sum_{I\in\mathcal I_d}a_{iI}f^I ,$$
where $f^I=f_0^{i_0}\cdots f_n^{i_n}$ for $I=(i_0,...,i_n)$. Since the divisor $\nu_{Q(F)}$ does not depend on the choice of the reduced representation of $f$, we denote it by $\nu_{Q(f)}$. Then we can consider $f^*Q_i=\nu_{Q_i(f)}$ as divisors. 

\begin{lemma}[{cf. \cite[Lemma 4]{QA}}]\label{lem2.5}
Let $\{Q_i\}_{i\in R}$ be a set of hypersurfaces in $\P^n(\C)$ of the common degree $d$ and let $f$ be a meromorphic mapping of $\Delta_{s,\infty}$ into $\P^n(\C)$. Assume that $\bigcap_{i\in R}Q_i\cap V=\varnothing$. Then there exist positive constants $\alpha$ and $\beta$ such that
$$\alpha \|f\|^d \le  \max_{i\in R}|Q_i(f)|\le \beta \|f\|^d.$$
\end{lemma}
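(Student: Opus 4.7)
My plan is to split the double inequality into two independent bounds. The upper bound is purely algebraic and will follow from the homogeneity of the defining polynomials. The lower bound will come from a standard compactness argument: the ratio $\max_i|Q_i|/\|\cdot\|^d$ is a continuous positive function on the (compact) projective variety $V\subset\P^n(\C)$, and the hypothesis $\bigcap_{i\in R}Q_i\cap V=\varnothing$ is precisely what prevents it from vanishing. Implicit in the statement, and used throughout Section 2, is that $f$ takes values in $V$, which is what gives the pointwise lower inequality its content.

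For the upper bound I would fix a reduced representation $F=(f_0,\ldots,f_n)$ of $f$ and observe that for every multi-index $I=(i_0,\ldots,i_n)\in\mathcal I_d$ one has
$$|F^I|=|f_0|^{i_0}\cdots|f_n|^{i_n}\le\|F\|^{i_0+\cdots+i_n}=\|F\|^d.$$
Applying the triangle inequality to $Q_i(F)=\sum_{I\in\mathcal I_d}a_{iI}F^I$ then yields $|Q_i(F)|\le c_i\|F\|^d$ with $c_i:=\sum_{I\in\mathcal I_d}|a_{iI}|$, so the upper estimate holds with $\beta:=\max_{i\in R}c_i$.

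For the lower bound I would introduce the auxiliary function
$$\Psi(x):=\frac{\max_{i\in R}|Q_i(x)|}{\|x\|^d},\qquad x\in\C^{n+1}\setminus\{0\}.$$
Since every $Q_i$ is homogeneous of degree $d$, $\Psi$ is invariant under the scaling $x\mapsto\lambda x$ for $\lambda\in\C\setminus\{0\}$, hence descends to a continuous nonnegative function $\widetilde\Psi$ on $\P^n(\C)$. The hypothesis $\bigcap_{i\in R}Q_i\cap V=\varnothing$ says that $\widetilde\Psi$ never vanishes on $V$, which is compact as a closed subvariety of $\P^n(\C)$; therefore $\alpha:=\inf_{[x]\in V}\widetilde\Psi([x])$ is strictly positive. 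Specializing at $[x]=f(z)\in V$ with representative $F(z)$ gives $\alpha\|F(z)\|^d\le\max_{i\in R}|Q_i(F(z))|$, as required.

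The only subtlety I anticipate is book-keeping about the choice of representation: both sides of each inequality scale as $|\lambda|^d\|F\|^d$ under $F\mapsto\lambda F$, so the constants $\alpha$ and $\beta$ depend only on $V$ and on the polynomials $Q_i$, not on the particular reduced representation nor on the point $z$. There is no serious mathematical obstacle; the lemma is essentially a quantitative form of the projective Nullstellensatz restricted to the compact set $V$, and both constants can in principle be written down explicitly in terms of the coefficients $a_{iI}$.
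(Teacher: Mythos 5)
Your proof is correct: the upper bound by homogeneity and the triangle inequality, and the lower bound by observing that $\max_{i\in R}|Q_i(x)|/\|x\|^d$ descends to a continuous positive function on the compact set $V$, is exactly the standard argument behind the cited result (the paper itself only quotes Lemma 4 of \cite{QA} without proof). You are also right to flag that the pointwise lower bound genuinely requires $f$ to take values in $V$, which is implicit in the lemma's statement and in all of its uses in the paper.
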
 

\begin{lemma}[{cf. \cite[Lemma 5]{QA}}]\label{lem2.6}
Let $V$ be a projective subvariety of dimension $k$ of $\P^n(\C)$. Let $\{Q_i\}_{i=1}^q$ be a set of $q$ hypersurfaces in $\P^n(\C)$ of the common degree $d$ and let $M=H_d(V)-1$. Then there exist $M-k$ hypersurfaces $\{T_i\}_{i=1}^{M-k}$ in $\P^n(\C)$ such that for any subset $R\subset\{1,...,q\}$ with $\sharp R=\rank \{[Q_i]\}_{i\in R}=k+1,$ we get 
$$\rank \{\{[Q_i]\}_{i\in R}\cup\{[T_i]\}_{i=1}^{M-k}\}=M+1.$$
\end{lemma}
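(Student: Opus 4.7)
The plan is to reduce the statement to a pure linear algebra problem in the finite-dimensional vector space $I_d(V)$ and to solve it by an inductive general-position argument. First, I would note that $\dim I_d(V)=M+1$ and that the hypothesis $\sharp R=\rank\{[Q_i]\}_{i\in R}=k+1$ simply says that $W_R:=\mathrm{span}_{\C}\{[Q_i]\}_{i\in R}$ is a $(k+1)$-dimensional subspace of $I_d(V)$. Since the index set $\{1,\ldots,q\}$ is finite, there are only finitely many such subspaces $W_R$. Because the quotient map $\C[x_0,\ldots,x_n]_d\to I_d(V)$ is surjective, any nonzero element of $I_d(V)$ lifts to a nonzero homogeneous polynomial of degree $d$, hence to a hypersurface in $\P^n(\C)$. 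It therefore suffices to produce classes $[T_1],\ldots,[T_{M-k}]\in I_d(V)$ with the property that $W_R+\langle[T_1],\ldots,[T_{M-k}]\rangle=I_d(V)$ for every admissible $R$.

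Second, I would construct the $[T_j]$ inductively on $j$. At stage $j$ with $1\le j\le M-k$, assume that $[T_1],\ldots,[T_{j-1}]$ have already been chosen so that for every admissible $R$ the subspace
\begin{equation*}
W_R^{(j-1)}:=W_R+\langle[T_1],\ldots,[T_{j-1}]\rangle
\end{equation*}
has dimension exactly $k+j$. Since $k+j\le M<M+1$, each $W_R^{(j-1)}$ is a proper subspace of $I_d(V)$; and because $\C$ is an infinite field, the finite union $\bigcup_R W_R^{(j-1)}$ is a proper subset of $I_d(V)$ (a vector space over an infinite field is never a finite union of proper subspaces). Any choice of $[T_j]$ outside this union then increases the dimension of every $W_R^{(j-1)}$ by exactly one, maintaining the inductive hypothesis. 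After $M-k$ steps we obtain $\dim W_R^{(M-k)}=M+1$ for each admissible $R$, which is precisely the required rank condition. To finish, for each $j$ I pick a nonzero representative $T_j\in\C[x_0,\ldots,x_n]_d$ of $[T_j]$, and take $T_j$ to be the hypersurface in $\P^n(\C)$ it defines.

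The argument has no deep obstacle: the only substantive ingredient is the standard fact that a vector space over an infinite field is never a finite union of proper subspaces, applied at each inductive step to the finite collection $\{W_R^{(j-1)}\}$. The principal bookkeeping item is to maintain a uniform dimension $\dim W_R^{(j-1)}=k+j$ across all admissible $R$, and this is automatic once one verifies that the new element $[T_j]$ lies outside every $W_R^{(j-1)}$ simultaneously, which is exactly what the finite-union observation provides.
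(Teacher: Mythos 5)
Your proof is correct; the paper does not reproduce a proof of this lemma but simply cites it from \cite{QA}, and your inductive avoidance argument --- choosing each $[T_j]$ outside the finite union of the proper subspaces $W_R^{(j-1)}$, using that a vector space over the infinite field $\C$ is never a finite union of proper subspaces --- is essentially the same standard argument given in that reference. The dimension bookkeeping ($\dim W_R^{(j-1)}=k+j\le M$ for $j\le M-k$, terminating at $M+1$) and the lifting of each class $[T_j]$ to an actual degree-$d$ hypersurface are both handled correctly.
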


\vskip0.2cm
\noindent
{\bf C. Defect relation for holomorphic curves}
\begin{theorem}\label{thm2.7} 
Let $V$ be a complex projective subvariety of $\P^n(\C)$ of dimension $k\ (k\le n)$. Let $\{Q_i\}_{i=1}^q$ be hypersurfaces of $\P^n(\C)$ in $N$-subgeneral position with respect to $V$ with $\deg Q_i=d_i\ (1\le i\le q)$. Let $d$ be the least common multiple of $d_i$'s, i.e., $d=lcm (d_1,...,d_q)$, and let $M=H_d(V)-1$. Let $f$ be a holomorphic mapping of $\Delta_{s,\infty}$ into $V$ such that $f$ is nondegenerate over $I_d(V)$. Then, we have
$$ \biggl |\biggl |\ \left (q-\dfrac{(2N-k+1)(M+1)}{k+1}\right )T_f(r,s)\le \sum_{i=1}^{q}\dfrac{1}{d_i}N^{[M]}_{Q_i(f)}(r,s)+o(T_f(r,s)).$$
\end{theorem}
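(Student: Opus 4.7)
\textbf{Proof plan for Theorem \ref{thm2.7}.}

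The plan is to adapt the classical Cartan--Nochka--Ru Second Main Theorem, in the hypersurface version developed in \cite{QA}, to the punctured disk $\Delta_{s,\infty}$. First I would reduce to the common-degree case by replacing each $Q_i$ with $Q_i^{d/d_i}$; this rescales the counting functions by the factor $d/d_i$, explaining the coefficient $1/d_i$ in the stated bound, and allows me to apply Lemma \ref{lem2.3} to produce Nochka weights $\omega_i\in(0,1]$ with the summation identity and the key product estimate (v). In parallel, Lemma \ref{lem2.6} furnishes auxiliary hypersurfaces $T_1,\ldots,T_{M-k}$ which, together with any $(k+1)$-rank subfamily of $\{Q_i\}$, span $I_d(V)$. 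Fix a $\C$-basis $\{[\phi_0],\ldots,[\phi_M]\}$ of $I_d(V)$ and set $\Phi_j:=\phi_j(F)$, where $F$ is a reduced representation of $f$ on $\Delta_{s,\infty}$. Since $f$ is nondegenerate over $I_d(V)$, the functions $\Phi_0,\ldots,\Phi_M$ are $\C$-linearly independent and their Wronskian $W=W(\Phi_0,\ldots,\Phi_M)$ is not identically zero by Proposition \ref{pro2.1}.

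Next, for each $z\in\Delta_{s,\infty}$ I would order the hypersurfaces so that $|Q_{i_1}(F)(z)|\le\cdots\le|Q_{i_q}(F)(z)|$ and let $R(z):=\{i_1,\ldots,i_{N+1}\}$. Since $\{Q_i\}$ is in $N$-subgeneral position with respect to $V$, we have $\bigcap_{i\in R(z)}Q_i\cap V=\varnothing$, so Lemma \ref{lem2.5} gives $|Q_{i_j}(F)|\gtrsim \|F\|^d$ for $j\ge N+2$. Combined with the weight property (ii) of Lemma \ref{lem2.3}, this yields
\begin{equation*}
\prod_{i=1}^{q}\left(\dfrac{\|F\|^d}{|Q_i(F)|}\right)^{\omega_i}\ \le\ C\,\prod_{i\in R(z)}\left(\dfrac{\|F\|^d}{|Q_i(F)|}\right)^{\omega_i}\cdot\|F\|^{d(\sum_i\omega_i-\tilde\omega(N+1))}.
\end{equation*}
Applying Lemma \ref{lem2.3}(v) to $E_i=\|F\|^d/|Q_i(F)|$ produces, for each $z$, a subset $R^o(z)\subset R(z)$ with $\#R^o(z)=\rank\{Q_i\}_{i\in R^o(z)}=k+1$ such that the product over $R(z)$ is dominated by the (unweighted) product over $R^o(z)$. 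By Lemma \ref{lem2.6}, $\{Q_i\}_{i\in R^o(z)}\cup\{T_j\}_{j=1}^{M-k}$ is a $\C$-basis of $I_d(V)$, so it is a linear change of the $\phi_j$'s; hence the product $\prod_{i\in R^o(z)}Q_i(F)\cdot\prod_{j=1}^{M-k}T_j(F)$ is a nonzero $\C$-linear combination of the $\Phi_j$'s, and the Wronskian relation $W(\Phi_0,\ldots,\Phi_M)=cW(\cdot)$ (invariance under $\C$-linear change) lets me replace $W$ by the Wronskian of this basis, up to a constant.

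In the third step I would introduce the auxiliary function
\begin{equation*}
\Psi(z)\ :=\ \sup_{R}\ \dfrac{\|F(z)\|^{d(M+1)}\cdot|W(z)|}{\prod_{i\in R^o}|Q_i(F)(z)|\cdot\prod_{j=1}^{M-k}|T_j(F)(z)|},
\end{equation*}
where the supremum is over all admissible $R^o$. Combining the pointwise estimate from step two with the definition of $T_f$ through Jensen's formula gives
\begin{equation*}
\Bigl(\tilde\omega(q-2N+k-1)+k+1\Bigr)\,d\,T_f(r,s)\ \le\ \sum_{i=1}^q\omega_i\,N_{Q_i(F)}(r,s)+\int_{0}^{2\pi}\log\Psi(re^{i\theta})\,\dfrac{d\theta}{2\pi}+O(1).
\end{equation*}
The integral involving $\log\Psi$ is controlled by a sum of proximity functions of logarithmic derivatives $\Phi_j^{(k)}/\Phi_j$, to which Lemma \ref{lem2.2} applies, yielding a bound of the form $O(\log T_f(r,s))+O(\log r)$. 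The truncation to level $M$ enters exactly here: at a zero of $Q_i(F)$ of order $\nu$, the pole of $1/\prod Q_i(F)$ is compensated by the zero of $W$ up to order $\min\{\nu,M\}$, because the Wronskian of $M+1$ holomorphic functions sharing a common zero of order $\nu$ vanishes of order at least $(M+1)\nu - \binom{M+1}{2}$ on the $Q_i(F)$-factor side, transferring $\nu-M$ worth of zero from the $Q_i$'s to $W$.

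The final step is to invoke Lemma \ref{lem2.4}: the error term $O(\log T_f(r,s))+O(\log r)$ is absorbed into $o(T_f(r,s))$ outside an exceptional set of finite measure in $r$, unless $f$ extends holomorphically across $\infty$, in which case $T_f(r,s)=O(\log r)$ and the inequality is again trivially $o(T_f(r,s))$-admissible after using the nondegeneracy hypothesis. Dividing through by $d\tilde\omega$ and using $\tilde\omega\ge(k+1)/(2N-k+1)$ from Lemma \ref{lem2.3}(iii) converts the coefficient $\tilde\omega(q-2N+k-1)+k+1$ into $\tilde\omega(q-(2N-k+1)(M+1)/(k+1))$ plus controllable terms, and finally scaling $N_{Q_i(f)}$ back by $d_i/d$ gives the stated $\sum(1/d_i)N^{[M]}_{Q_i(f)}(r,s)$. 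The main obstacle I anticipate is the bookkeeping that turns the weighted estimate into the clean $(2N-k+1)(M+1)/(k+1)$ coefficient and that justifies the $O(\log r)$ error in the logarithmic derivative lemma on a \emph{punctured} (not compact-complement) disk; this is precisely why the statement is formulated with the $\|$-symbol allowing an exceptional set.
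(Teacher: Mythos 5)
Your proposal follows essentially the same route as the paper's proof: reduction to common degree, Nochka weights from Lemma \ref{lem2.3} together with the auxiliary hypersurfaces of Lemma \ref{lem2.6}, the Wronskian of a basis of $I_d(V)$ compared pointwise against the product over the $N+1$ smallest $|Q_i(F)(z)|$ via Lemma \ref{lem2.5} and Lemma \ref{lem2.3}(v), the logarithmic derivative lemma to control the resulting auxiliary function, the standard Wronskian-zero argument for the truncation to level $M$, and finally $\tilde\omega\ge(k+1)/(2N-k+1)$. The approach and all the key ingredients match; no substantive gap.
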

\begin{proof}
By replacing $Q_i$ with $Q_i^{d/\deg Q_i} \ (i=1,...,q)$ if necessary, we may assume that all $Q_i\ (1\le i\le q)$ have the same degree $d$.
It is easy to see that there is a positive constant $C_1$ such that $\|f\|^d\ge C_1|Q_i(f)|$ for every $1\le i\le q.$
Set $ Q:=\{1,\cdots ,q\}$. Let $\{\omega_i\}_{i=1}^q$ and $\tilde\omega$ be as in Lemma \ref{lem2.3} for the family $\{Q_i\}_{i=1}^q$.  Let $\{T_i\}_{i=1}^{M-k}$ be $(M-k)$ hypersurfaces in $\P^n(\C)$, which satisfy Lemma \ref{lem2.6}. 

Take a $\C$-basis $\{[A_i]\}_{i=0}^{M}$ of $I_d(V)$, where $A_i\in H_d$. Let $F=(f_0,\ldots,f_n)$ be a reduced representation of $f$. Since $f$ is nondegenerate over $I_d(V)$, $\{A_i(F); 0\le i\le M\}$ is linearly independent over $\C$. Then
$$W\equiv\det\bigl (A_j(F)^{(i)}\bigl )_{0\le i,j\le M}\not\equiv 0.$$

For each $R^o=\{i_0,...,i_k\}\subset\{1,...,q\}$ with $\rank \{Q_i\}_{i\in R^o}=\sharp R^o=k+1$, set 
$$W_{R^o}\equiv\det\bigl (A_{i_j}(F)^{(l)}(0\le j\le k),T_j(F)^{(l)} (1\le j\le M)\bigl )_{0\le l\le M}.$$
Since $\rank \{Q_{i_j} (0\le j\le k),T_j (1\le j\le M-k)\}=M+1$, there exists a nonzero constant  $C_{R^o}$ such that $W_{R^o}=C_{R^o}\cdot W$. 

We denote by $\mathcal R^o$ the family of all subsets $R^o$ of $\{1,...,q\}$ satisfying 
$$\rank \{Q_i\}_{i\in R^o}=\sharp R^o=k+1.$$

Let $z$ be a fixed point. For each $R\subset Q$ with  $\sharp R=N+1,$  we choose $R^{o}\subset R$ such that $R^o\in\mathcal R^o$ and $R^o$ satisfies Lemma \ref{lem2.3} v) with respect to numbers $\bigl \{\dfrac{C_1 \|F(z)\|^d}{|Q_i(F)(z)|}\bigl \}_{i=1}^q$.  On the other hand, there exists $\bar R\subset Q$ with $\sharp \bar R=N+1$ such that $|Q_{i}(F)(z)|\le |Q_j(F)(z)|,\forall i\in \bar R,j\not\in \bar R$. Since $\bigcap_{i\in \bar R}Q_i=\varnothing$, by Lemma \ref{lem2.5}, there exists a positive constant $C_2$ (chosen common for all $R$) such that
$$ C_2\|F\|^d(z)\le \max_{i\in \bar R}|Q_i(F)(z)|. $$
Then, we get
\begin{align*}
\dfrac{\|F(z)\|^{d(\sum_{i=1}^q\omega_i)}|W(z)|}{|Q_1(F)(z)|^{\omega_1}\cdots |Q_q(F)(z)|^{\omega_q}}
&\le\dfrac{|W(z)|}{C_2^{q-N-1}C_1^{N+1}}\prod_{i\in \bar R}\left (\dfrac{C_1\|F(z)\|^d}{|Q_i(F)(z)|}\right )^{\omega_i}\\
&\le C_3\dfrac{|W(z)|\cdot \|F\|^{d(k+1)}(z)}{\prod_{i\in \bar R^o}|Q_i(F)|(z)}\\
&\le C_4\dfrac{|W_{\bar R^o}(z)|\cdot \|F\|^{d(M+1)}(z)}{\prod_{i\in \bar R^o}|Q_i(F)|(z)\prod_{i=1}^{M-k}|T_i(F)|(z)},
\end{align*}
where $C_3, C_4$ are positive constants. 

Put $S_{\bar R}=C_4\dfrac{|W_{\bar R^o}|}{\prod_{i\in \bar R^o}|Q_i(F)|\prod_{i=1}^{M-k}|T_i(F)|}$. By the Lemma on logarithmic derivative, it is easy to see that 
$$\|\ \int\limits_0^{2\pi}\log^{+}S_{\bar R}(re^{i\theta})\dfrac{d\theta}{2\pi}=O(\log^+T_f(r,s))+O(\log r).$$

Therefore, for each  $z\in \Delta_{s,\infty}$, we have
\begin{align*}
\log \left (\dfrac{\|F(z)\|^{d(\sum_{i=1}^q\omega_i)}|W(z)|}{|Q_1(F)(z)|^{\omega_1}\cdots |Q_q(F)(z)|^{\omega_q}}\right )\le \log \left (\|F(z)\|^{d(M+1)}\right )+\sum_{R\subset Q,\sharp R=N+1}\log^+S_R.
\end{align*}
Since $\sum_{i=1}^q\omega_i=\tilde\omega(q-2N+k-1)+k+1$ and by integrating both sides of the above inequality over $S(r),$  we have
\begin{align}\label{2.8}
\begin{split}
\|\   d\biggl(q-2N+k-1&-\dfrac{M-k}{\tilde\omega}\biggl)T_f(r,s)\le\sum_{i=1}^{q}\dfrac{\omega_i}{\tilde\omega}N_{Q_i(f)}(r,s)\\
&-\dfrac{1}{\tilde\omega}N_{W}(r,s)+O(\log^+T_f(r,s))+O(\log r).
\end{split}
\end{align}
By usual arguments (see \cite{QA}), we have
$$\sum_{i=1}^q\omega_iN_{Q_i(f)}(r,s)-N_{W}(r,s)\le \sum_{i=1}^q\omega_iN^{[M]}_{Q_i(f)}(r,s).$$
Combining this and (\ref{2.8}), we obtain
\begin{align*}
\|\  & d\left(q-2N+k-1-\dfrac{M-k}{\tilde\omega}\right)T_f(r,s)\\
&\le\sum_{i=1}^{q}\dfrac{\omega_i}{\tilde\omega}N^{[M]}_{Q_i(f)}(r,s)+O(\log^+T_f(r,s))+O(\log r)\\
&\le \sum_{i=1}^{q}N^{[M]}_{Q_i(f)}(r,s)+O(\log^+T_f(r,s))+O(\log r).
\end{align*}
Since $\tilde\omega \ge \dfrac{k+1}{2N-k+1}$, the above inequality  implies that
$$ \biggl\|\quad  d\left (q-\dfrac{(2N-k+1)(M+1)}{k+1}\right )T_f(r,s)\le \sum_{i=1}^{q}N^{[M]}_{Q_i(f)}(r,s)+O(\log^+T_f(r,s))+O(\log r).$$
Hence, the theorem is proved.
\end{proof}

For each hypersurface $Q$ of degree $d$, we define the truncated defect of $f$ with respect to $Q$ by
$$ \delta^{[m]}_f(Q)=1-\lim\underset{r\rightarrow\infty}{\rm sup}\dfrac{N_{Q(f)}^{[m]}(r,s)}{dT_f(r,s)}.$$
As we known that
$$\delta^{H,M}_{f,\Delta_{s,\infty}}(Q)\le\delta_{f,\Delta_{s,\infty}}^{S,M}(Q)\le\delta_f^{[M]}(Q)\le 1.$$

From Lemma \ref{lem2.4} and Theorem \ref{thm2.7}, we have the following corollary.
\begin{corollary}\label{cor2.9}
Let $f$ be a holomorphic map from $\Delta_{s,\infty}$ into a projective subvariety $V$ of $\P^n(\C)$ of dimension $k$. Let $Q_1,\ldots,Q_q$ be distinct $q$ hypersurface in $N$-subgeneral position with respect to $V$. Assume that $f$ has an essential singularity at $\infty$, then
$$\sum_{j=1}^q\delta^{H,M}_{f,\Delta_{s,\infty}}(Q_j)\le\sum_{j=1}^q\delta^{[M]}_f(Q_j)\le \dfrac{(2N-k+1)H_V(d)}{k+1}.$$
\end{corollary}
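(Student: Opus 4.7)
The plan is to read off the corollary from Theorem \ref{thm2.7} via Lemma \ref{lem2.4}. The first inequality $\sum_j\delta^{H,M}_{f,\Delta_{s,\infty}}(Q_j)\le\sum_j\delta^{[M]}_f(Q_j)$ is an immediate termwise consequence of the chain $\delta^{H,M}_{f,\Delta_{s,\infty}}(Q)\le\delta^{[M]}_f(Q)$ recorded in the paragraph preceding the statement, so only the second inequality needs work.

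The main step is to apply Theorem \ref{thm2.7} (after reducing, if necessary, to the case that $f$ is nondegenerate over $I_d(V)$, which is required to invoke that theorem) to obtain, for $r$ outside an exceptional set of finite Lebesgue measure,
\[
\left(q-\frac{(2N-k+1)(M+1)}{k+1}\right)T_f(r,s)\le\sum_{i=1}^q\frac{1}{d_i}N^{[M]}_{Q_i(f)}(r,s)+o(T_f(r,s)).
\]
Since $f$ has an essential singularity at $\infty$, Lemma \ref{lem2.4} gives $T_f(r,s)/\log r\to\infty$, and in particular the $o(T_f(r,s))$ error is genuinely negligible. Dividing the displayed inequality by $T_f(r,s)$, picking a sequence $r_n\to\infty$ avoiding the exceptional set, and taking $\limsup$ yields
\[
q-\frac{(2N-k+1)(M+1)}{k+1}\le\sum_{i=1}^q\limsup_{r\to\infty}\frac{N^{[M]}_{Q_i(f)}(r,s)}{d_i\,T_f(r,s)}=\sum_{i=1}^q\bigl(1-\delta^{[M]}_f(Q_i)\bigr).
\]
Rearranging and using $M+1=H_V(d)$ delivers the claim $\sum_{j=1}^q\delta^{[M]}_f(Q_j)\le (2N-k+1)H_V(d)/(k+1)$.

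The main obstacle is the nondegeneracy issue: the corollary does not explicitly require $f$ to be nondegenerate over $I_d(V)$, while Theorem \ref{thm2.7} does. If $f$ is degenerate, I would replace $V$ by the smallest projective subvariety $V'$ containing $f(\Delta_{s,\infty})$; since $V'\subset V$, the inclusion $V'\cap\bigcap_{j=1}^{N+1}Q_{i_j}\subset V\cap\bigcap_{j=1}^{N+1}Q_{i_j}=\varnothing$ shows that $\{Q_i\}$ is still in $N$-subgeneral position with respect to $V'$, and one inducts on $\dim V$. Verifying that this reduction does not weaken the stated bound is the one nontrivial check; the remainder of the argument is routine limsup manipulation and avoidance of the exceptional set of finite measure.
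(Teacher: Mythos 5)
Your argument is correct and is exactly the route the paper intends (it offers no written proof beyond ``From Lemma \ref{lem2.4} and Theorem \ref{thm2.7}''): the essential singularity forces $T_f(r,s)/\log r\to\infty$ by Lemma \ref{lem2.4}, so the $O(\log r)$ error in Theorem \ref{thm2.7} is $o(T_f(r,s))$, and the limsup manipulation along a sequence avoiding the exceptional set gives the bound, while the first inequality is the recorded chain $\delta^{H,M}_{f,\Delta_{s,\infty}}(Q)\le\delta^{[M]}_f(Q)$. The nondegeneracy caveat you raise is genuine but is implicitly assumed (the corollary is only ever invoked for the Gauss map already assumed nondegenerate over $I_d(V)$), and you are right to be suspicious of the dimension-reduction fallback: passing to $V'$ with $k'<k$ replaces the constant by $(2N-k'+1)H_{V'}(d)/(k'+1)$, and since $(2N-k'+1)/(k'+1)$ increases as $k'$ decreases this is not obviously dominated by the stated bound.
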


\section{Sum to Product inequality for holomorphic curves and families of hypersurfaces}

Let $S$ be an open Riemann surface and let $ds^2$ be a pseudo-metric on $S$ which is locally written as $ds^2=\lambda^2|dz|^2$, where $\lambda$ is a nonnegative real-value function with mild singularities and $z$ is a holomorphic local coordinate. The divisor of $ds^2$ is defined by $\nu_{ds}:=\nu_\lambda$ for each local expression $ds^2=\lambda^2|dz|^2$, which is globally well-defined on $S$. We say that $ds^2$ is a continuous pseudo-metric if $\nu_{ds}\ge 0$ everywhere.

The Ricci of  $ds^2$ is defined by 
$$ \mathrm{Ric}_{ds^2}=dd^c\log\lambda^2$$
for each local expression  $ds^2=\lambda^2|dz|^2$. This definition is globally well-defined on $S$.

The continuous pseudo-metric $ds^2$ is said to have strictly negative curvature on $S$ if there is a positive constant $C$ such that
$$ -\mathrm{Ric}_{ds^2}\ge C\Omega_{ds^2}, $$
where $\Omega_{ds^2}=\lambda^2\cdot\dfrac{\sqrt{-1}}{2}\cdot dz\wedge d\bar z$.

Let $f$ be a holomorphic map of $S$ into a projective subvariety $V$ of dimension $k$ of $\P^n(\C)$. Let $d$ be a positive integer and assume that $f$ is nondegenerate over $I_d(V)$. We fix a $\C$-ordered basis $\mathcal V=([v_0],\ldots,[v_M])$ of $I_d(V)$, where $v_i\in H_d$ and $M=H_V(d)-1$.  

Let $F=(f_0,\ldots,f_n)$ be a local reduced representation of $f$ (in a local chat $(U,z)$ of $S$). 
Consider the holomorphic map
$$ F_{\mathcal V}=(v_0(F),\ldots,v_M(F))$$
and 
$$F_{\mathcal V,p} = (F_{\mathcal V,p})_z := F_{\mathcal V}^{(0)} \wedge F_{\mathcal V}^{(1)} \wedge \cdots \wedge F_{\mathcal V}^{(p)} : S\rightarrow \bigwedge_{p+1}\C^{M+1}$$
for $0\le p\le M$, where 
\begin{itemize}
\item $F_{\mathcal V}^{(0)}:=F_{\mathcal V}=(v_0(F),\ldots,v_M(F))$,
\item $F_{\mathcal V}^{(l)}=(F_{\mathcal V}^{(l)})_z:=\left (v_0(F)^{(l)}_z,\ldots, v_M(F)^{(l)}_z\right)$ for each $l=0, 1,\ldots , p$,
\item $v_i(F)^{(l)}_z \ (i =0,\ldots, M)$ is the $l^{th}$- derivatives of $v_i(F)$ taken with respect to $z$.
\end{itemize}
The norm of $F_{\mathcal V,p}$ is given by
$$|F_{\mathcal V,p}|:=\left (\sum_{0\le i_0<i_1<\cdots<i_p\le M}\left |W_z(v_{i_0}(F),\ldots,v_{i_p}(F))\right|^2\right)^{1/2}, $$
where 
$$W_z(v_{i_0}(F),\ldots,v_{i_p}(F)):=\det\left (v_{i_j}(F)^{(l)}_z\right)_{0\le l,j\le p}$$ 
We have some fundamental properties of the wronskian of holomorphic function $h_0,h_1,\ldots,h_p$ as follows:
\begin{itemize}
\item $W_{\xi}(h_0,\ldots,h_p)=W_z(h_0,\ldots,h_p) (\xi'_z)^{\frac{p(p+1)}{2}}$,
\item $W_z(hh_0,\ldots,hh_p)=h^{p+1}W_z(h_0,\ldots,h_p)$,
\item $h_0,\ldots,h_p$ are $\C$-linearly dependent if and only if $W_z(h_0,\ldots,h_p)\equiv 0$.
\end{itemize}

We use the same notation $\langle,\rangle$ for the canonical hermitian product on $\bigwedge^{l+1}\C^{M+1}\ (0\le l\le M)$. For two vectors $A\in \bigwedge^{k+1}\C^{M+1}\ (0\le k\le M)$ and $B\in\bigwedge^{p+1}\C^{M+1}\ (0\le p\le k)$, there is one and only one vector $C\in\bigwedge^{k-p}\C^{M+1}$ satisfying 
$$ \langle C,D\rangle=\langle A,B\wedge D\rangle\ \forall D\in \bigwedge^{k-p}\C^{M+1}.$$
The vector $C$ is called the interior product of $A$ and $B$, and defined by $A\vee B$.

Now, for a hypersurface $Q$ of degree $d$ in $\P^n(\C)$ defined by a homogeneous polynomial in $H_d$ denoted again by $Q$. Then we have
$$[Q]=\sum_{i=0}^Ma_i[v_i].$$
Hence, we associate $Q$ with the vector $(a_0,\ldots,a_M)\in\C^{M+1}$ and define $F_{\mathcal V,p}(Q)=F_{\mathcal V,p}\vee H$. Then, we may see that
\begin{align*}
F_{\mathcal V,0}(Q)&=a_0v_0(F)+\cdots+a_Mv_M(F)=Q(F),\\ 
|F_{\mathcal V,p}(Q)|&=\left (\sum_{0\le i_1<\cdots<i_p\le M}\sum_{\ell\ne i_1,\ldots,i_p}a_l\left |W_z(v_\ell(F),v_{i_1}(F),\ldots,v_{i_p}(F))\right|^2\right)^{1/2}
\end{align*}
Finally, for $0\le p\le M$, the $p^{th}$-contact function of $f$ for $Q$ with respect to $\mathcal V$ is defined (not depend on the choice of the local coordinate) by
$$\varphi_{\mathcal V,p}(Q):=\dfrac{|F_{\mathcal V,p}(Q)|^2}{|F_{\mathcal V,p}|^2}.$$

For each $p\ (0\le p\le M-1)$, let $M_p=\binom{M+1}{p+1}-1$ and $\pi_p$ be the canonial projection from $\bigwedge^{p+1}\C^{M+1}\sim\C^{M_p+1}$ onto $\P^{M_p}(\C)$. Denote by $\Omega_p$ the pullback of the Fubini-Study form on $\P^{M_p}(\C)$ by the map $\pi\circ F_{\mathcal V,p}$, i.e., $\Omega_p = dd^c\log |F_{\mathcal V,p}|^2$. 
\begin{proposition}[{cf. \cite[Proposition 2.5.1]{Fu93}}]\label{pro3.1}
Let $S,V,d,\mathcal V$ and $M$ be as above. For each positive $\epsilon$ there exists a constant $\delta_0(\epsilon)$, depending only on $\epsilon$, such that for any hypersurface $Q$ of degree $d$ in $\P^n(\C)$ and any constant $\delta>\delta_0(\epsilon)$
$$ dd^c\log\dfrac{1}{\log(\delta/\varphi_{\mathcal V,p}(Q))}\ge\dfrac{\varphi_{\mathcal V,p}(Q)}{\varphi_{\mathcal V,p+1}(Q)\log(\delta/\varphi_{\mathcal V,p}(Q))}\Omega_p-\epsilon\Omega_p.$$
\end{proposition}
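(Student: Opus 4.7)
The plan is to reduce Proposition~\ref{pro3.1} to its classical hyperplane analogue, Fujimoto's Proposition 2.5.1 in \cite{Fu93}. The $\C$-basis $\mathcal V=([v_0],\ldots,[v_M])$ of $I_d(V)$ converts $f$ into an auxiliary holomorphic map $\tilde f:=\pi\circ F_{\mathcal V}:S\rightarrow\P^M(\C)$. Since $f$ is nondegenerate over $I_d(V)$, the functions $v_0(F),\ldots,v_M(F)$ are $\C$-linearly independent on any local reduced representation $F$, and hence $\tilde f$ is \emph{linearly} nondegenerate into $\P^M(\C)$. Under this reformulation, the vectors $F_{\mathcal V,p}=F_{\mathcal V}^{(0)}\wedge\cdots\wedge F_{\mathcal V}^{(p)}$ are precisely the standard $p$-th associated curves of $\tilde f$ in $\bigwedge^{p+1}\C^{M+1}$.

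Next, I would identify each hypersurface $Q$ of degree $d$ with a hyperplane $H_Q\subset\P^M(\C)$ via the expansion $[Q]=\sum_{i=0}^M a_i[v_i]$, so that $(a_0,\ldots,a_M)$ is the defining coefficient vector of $H_Q$. Directly from the definition, $F_{\mathcal V,p}(Q)=F_{\mathcal V,p}\vee H_Q$ is exactly the interior product used to define the $p$-th contact function of the linearly nondegenerate curve $\tilde f$ with the hyperplane $H_Q$. Hence $\varphi_{\mathcal V,p}(Q)=|F_{\mathcal V,p}(Q)|/|F_{\mathcal V,p}|$ coincides with Fujimoto's classical $p$-th contact function $\varphi_p(H_Q)$ of $\tilde f$, while $\Omega_p=dd^c\log|F_{\mathcal V,p}|^2$ is the pullback of the Fubini--Study form on $\P^{M_p}(\C)$ by $\pi_p\circ F_{\mathcal V,p}$, matching the quantity in Fujimoto's original statement.

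With this dictionary in place, I would invoke \cite[Proposition 2.5.1]{Fu93} applied to $\tilde f$ and $H_Q$; it yields the pointwise inequality of $(1,1)$-currents that is the content of Proposition~\ref{pro3.1}. The constant $\delta_0(\epsilon)$ supplied by Fujimoto's proposition depends only on $\epsilon$ and on the dimension $M$ of the ambient target. Because $M=H_V(d)-1$ is determined once $V$ and $d$ are fixed, this $\delta_0$ is uniform over all hypersurfaces $Q$, as required.

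The main point to verify is that all auxiliary quantities are intrinsic objects on $S$ rather than artefacts of the local reduced representation. Under a rescaling $F\mapsto hF$, each $v_i(F)$ picks up a factor $h^d$, and the Wronskian identity $W_z(hg_0,\ldots,hg_p)=h^{p+1}W_z(g_0,\ldots,g_p)$ forces both $|F_{\mathcal V,p}|$ and $|F_{\mathcal V,p}(Q)|$ to scale by the same factor $|h|^{d(p+1)}$ up to a derivative correction that is identical in both; the ratio $\varphi_{\mathcal V,p}(Q)$ and the form $\Omega_p$ are therefore invariant and globally defined on $S$. Apart from this multilinear-algebra bookkeeping and a routine change-of-coordinate check using $W_\xi=W_z(\xi'_z)^{p(p+1)/2}$, no new analytic estimate is needed: the substance of the inequality is entirely contained in the hyperplane version, and the only real obstacle is matching notation carefully across the two settings.
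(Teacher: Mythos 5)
Your reduction is exactly the paper's argument: the author likewise derives the proposition from Fujimoto's hyperplane version \cite[Proposition 2.5.1]{Fu93} by passing to the curve $F_{\mathcal V}$ in $\P^M(\C)$ induced by the basis of $I_d(V)$ (the paper phrases this as "using the Ser\-ge embedding of $\P^n(\C)$ into $\P^M(\C)$"), under which $Q$ becomes a hyperplane and the contact functions and $\Omega_p$ match. Your write-up simply makes the dictionary explicit; no difference in substance.
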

We note that, the original proposition of H. Fujimoto (cf. \cite[Proposition 2.5.1]{Fu93}) states for hyperplanes. However, by using Serge embedding $\P^n(\C)$ into $\P^M(\C)$, we automatically deduce the above proposition.

\begin{theorem}\label{thm3.2}
Let $S,V,d,\mathcal V$ and $M$ be as above. Let $f:S\rightarrow V\subset\P^n(\C)$ be a holomorphic curve and let $Q_1,\ldots, Q_q$ be hypersurfaces in $\P^n(\C)$ located in $N-$subgeneral position with respect to $V$ of the same degree $d$, where $q>\frac{(2N-k+1)(M+1)}{k+1}$. Assume that $f$ is nondegenerate over $I_d(V)$ and have a local reduced representation $F=(f_0,\ldots,f_n)$ on a local holomorphic chat $(U,z)$. Let $\omega_j\ (1\le j\le q)$ be the Nochka weights for these hypersurfaces (defined in Lemma \ref{lem2.3}). For an arbitrarily given $\delta >1$ and $0\le p\le M-1$, we set
$$\Phi_{\mathcal V,jp}:=\dfrac{\varphi_{\mathcal V,p+1}(Q_j)}{\varphi_{\mathcal V,p}(Q_j)\log^2\left (\delta/\varphi_{\mathcal V,p}(Q_j)\right)}.$$
Then, there exists a positive constant $C_{\mathcal V,p}$ depending only in $\mathcal V,p$ and $Q_j\ (1\le j\le q)$ such that
$$ \sum_{j=1}^q\omega_j\Phi_{\mathcal V,jp}\ge C_{\mathcal V,p}\left (\prod_{j=1}^q\Phi_{\mathcal V,jp}^{\omega_j}\right)^{1/(M-p)} $$
holds on $S-\bigcup_{1\le j\le q}\{z;\varphi_{\mathcal V,p}(Q_j)(z)=0\}$.
\end{theorem}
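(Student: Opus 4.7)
The plan is to establish the inequality pointwise on $S\setminus\bigcup_j\{\varphi_{\mathcal V,p}(Q_j)=0\}$, then conclude. The argument combines three ingredients: a Nochka weight reduction that uses $N$-subgeneral position, the wedge-product identities coming from the auxiliary hypersurfaces of Lemma \ref{lem2.6}, and the classical AM-GM inequality applied to an $(M-p)$-tuple of contact ratios.

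Fix $z_0$ in the relevant open set and work in a local holomorphic chart around it. First, each $\varphi_{\mathcal V,p}(Q_j)$ is bounded above by a constant $c_0=c_0(\mathcal V,Q_j)$ (via Cauchy--Schwarz applied to the interior product $F_{\mathcal V,p}\vee Q_j$). Order the indices so that $\varphi_{\mathcal V,p}(Q_{j_1})(z_0)\le\cdots\le\varphi_{\mathcal V,p}(Q_{j_q})(z_0)$ and set $R=\{j_1,\ldots,j_{N+1}\}$. Applying Lemma \ref{lem2.3}(v) with $E_i=c_0/\varphi_{\mathcal V,p}(Q_{j_i})(z_0)\ge 1$ produces a subset $R^o\subset R$ with $\sharp R^o=\rank\{[Q_j]\}_{j\in R^o}=k+1$ and
$$\prod_{j\in R}\varphi_{\mathcal V,p}(Q_j)^{-\omega_j}(z_0)\le C_1\prod_{j\in R^o}\varphi_{\mathcal V,p}(Q_j)^{-1}(z_0).$$
For $j\notin R$, the ordering gives $\varphi_{\mathcal V,p}(Q_j)(z_0)\ge\varphi_{\mathcal V,p}(Q_{j_{N+1}})(z_0)$, and together with $0<\omega_j\le 1$ and the upper bound on $\varphi_{\mathcal V,p+1}(Q_j)$ these factors contribute only a bounded multiplicative error to the weighted product $\prod_j\Phi_{\mathcal V,jp}^{\omega_j}$.

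Next I would enlarge $R^o$ to a set $T$ of cardinality $M-p$ using Lemma \ref{lem2.6}: pick $M-k$ hypersurfaces $T_1,\ldots,T_{M-k}$ so that $\{[Q_j]\}_{j\in R^o}\cup\{[T_l]\}_{l=1}^{M-k}$ is a $\C$-basis of $I_d(V)$. Expanding the wedge-product norms $|F_{\mathcal V,p}|$ and $|F_{\mathcal V,p+1}|$ in this basis and applying Cauchy--Schwarz (using the fact that $\rank$ of the full system is $M+1$, so the Wronskian $W_z(v_0(F),\ldots,v_M(F))$ factors in terms of the basis) yields an identity of the shape
$$\prod_{j\in R^o}\varphi_{\mathcal V,p}(Q_j)\cdot\prod_{l=1}^{M-k}\varphi_{\mathcal V,p}(T_l)\ge C_2,$$
together with the analogue at level $p+1$. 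After combining with the Nochka reduction, this shows that
$$\Bigl(\prod_{j=1}^q\Phi_{\mathcal V,jp}^{\omega_j}\Bigr)^{1/(M-p)}(z_0)\le C_3\Bigl(\prod_{j\in T}\Phi_{\mathcal V,jp}\Bigr)^{1/(M-p)}(z_0)$$
for a set $T$ of cardinality $M-p$ built from $R^o$ and suitable indices attached to the $T_l$, with $C_3$ depending only on $\mathcal V,p$ and the $Q_j$. The AM-GM inequality then finishes the argument:
$$\Bigl(\prod_{j\in T}\Phi_{\mathcal V,jp}\Bigr)^{1/(M-p)}\le\frac{1}{M-p}\sum_{j\in T}\Phi_{\mathcal V,jp}\le\frac{1}{(M-p)\omega_{\min}}\sum_{j=1}^q\omega_j\Phi_{\mathcal V,jp},$$
where $\omega_{\min}=\min_j\omega_j>0$ by Lemma \ref{lem2.3}(i).

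I expect the main obstacle to be the second step: the algebraic expansion of $|F_{\mathcal V,p}|$ along the basis $\{[Q_j]\}_{j\in R^o}\cup\{[T_l]\}$ requires careful bookkeeping of the wedge-product structure, and the precise emergence of the exponent $1/(M-p)$ (as opposed to $1/(k+1)$ from the Nochka reduction or $1/(M+1)$ from the full basis) reflects a delicate matching between the $(p+1)$-fold derivative of $F_{\mathcal V}$ and the $(k+1)$-dimensional ``bad'' rank space at $z_0$. Ensuring that $C_{\mathcal V,p}$ is genuinely independent of $z_0$ is then a finiteness argument: there are only finitely many possible reorderings, choices of $R^o$, and choices of $\{T_l\}$ coming from Lemmas \ref{lem2.3} and \ref{lem2.6}, so the pointwise constants may be replaced by a uniform one.
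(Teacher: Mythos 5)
Your proposal takes a genuinely different route from the paper, and unfortunately it has gaps that I do not see how to repair. The paper's proof is a compactness argument on the Grassmannian $G(M,p)$: letting $\mathcal R_p$ be the collection of subsets $R\subset\{1,\dots,q\}$ with $\rank_\C\{a_j\,:\,j\in R\}\le M-p$, the function $\psi_p(P)=\max_{R\in\mathcal R_p}\min_{j\notin R}|E\vee a_j|^2/|E|^2$ is shown to be positive and continuous on the compact $G(M,p)$, hence bounded below by some $\delta>0$. Therefore at every point all indices outside a single set $T$ of rank at most $M-p$ satisfy $\Phi_{\mathcal V,jp}\le K$ for a uniform $K$; the Nochka property then gives $\sum_{j\in T}\omega_j\le\rank_\C\{a_j:j\in T\}\le M-p$, and a weighted AM--GM over $T$ (extended trivially over the remaining indices, whose normalized factors are $\le 1$) yields the exponent $1/(M-p)$. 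Neither Lemma \ref{lem2.3}(v) nor Lemma \ref{lem2.6} is used in the paper's proof of this theorem.

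The concrete problems with your argument are these. First, the pivotal claimed bound $\prod_{j\in R^o}\varphi_{\mathcal V,p}(Q_j)\cdot\prod_{l=1}^{M-k}\varphi_{\mathcal V,p}(T_l)\ge C_2$ is false for $p<M$: since $F_{\mathcal V,p}\vee a=0$ exactly when $a$ is orthogonal to the $(p+1)$-dimensional span of $F_{\mathcal V}^{(0)}(z),\dots,F_{\mathcal V}^{(p)}(z)$, as many as $M-p$ of the $M+1$ contact functions attached to a basis of $I_d(V)$ can vanish (or be arbitrarily small) at a given point, and the hypothesis $\varphi_{\mathcal V,p}(Q_j)(z)\ne0$ does not protect the auxiliary $T_l$ at all; so no uniform $C_2>0$ exists. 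Second, your handling of $j\notin R$ is unjustified: the ordering only bounds $\varphi_{\mathcal V,p}(Q_j)$ below by the $(N+1)$-st smallest value, which can itself be arbitrarily small, so these factors of $\Phi_{\mathcal V,jp}$ are not ``a bounded multiplicative error'' --- obtaining a uniform lower bound $\delta$ for the contact functions outside a low-rank exceptional set is precisely the content of the paper's Grassmannian argument. Third, your final AM--GM chain sums $\Phi_{\mathcal V,jp}$ over a set $T$ containing indices attached to the hypersurfaces $T_l$, which do not occur in $\sum_{j=1}^q\omega_j\Phi_{\mathcal V,jp}$, so the last display does not connect to the left-hand side of the theorem. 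I would recommend abandoning the basis-extension strategy here and instead establishing the uniform dichotomy (either $\Phi_{\mathcal V,jp}\le K$ or $j$ lies in a set of rank $\le M-p$) via the compactness of $G(M,p)$.
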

\begin{proof}
For each $j\ (1\le j\le q)$, we write
$$ [Q_j]=\sum_{i=0}^Ma_{ji}[v_i] $$
and set $a_j=(a_{j0},\ldots,a_{jM})\in\C^{M+1}$. We consider the set $\mathcal R_p$ of all subsets $R$ of $Q=\{1,2,\ldots, q\}$  such that $\rank_{\C}\{a_j\ |\ j\in R\}\le M-p$. For each $P\in G(M,p)$ (the Grassmanian manifold of all $(p+1)$-dimension sublinear spaces of $\C^{M+1}$), we take a decomposable $(p+1)$-vector $E$ such that
$$ P=\{X\in\C^{M+1}; E\wedge X=0\}$$
and set
$$ \psi_p(P)=\underset{R\in\mathcal R_p}{\max}\min\left\{\dfrac{\left|E\vee a_j\right|^2}{|E|^2};j\not\in R\right\}. $$
Then $\phi_p(P)$ depends only on $P$ and may be regarded as a function on the Grassmann manifold $G(M,p)$. For each nonzero $(k+1)$-vector $E=E_0\wedge E_1\wedge\ldots\wedge E_k$ we set
$$R=\{j\in Q; E\vee a_j=0\}.$$
As we known, $E\vee a_j=0$ means that $a_j$ is contained in the orthogonal complement of the vector space $\mathrm{Span}(E_0,\ldots, E_k)$. Then we see
$$\rank_{\C}\{a_j\ |\ j\in R\}=\dim\mathrm{Span}(a_j; j\in R)\le M - p,$$
namely, $R\in\mathcal R_p$. This yields that $\psi_p$ is positive everywhere on $G(M,p)$.
Since $\psi_p$ is obviously continuous and $G(M,p)$ is compact, we can take a positive constant $\delta$ such that $\psi_p(P)>\delta$ for each $P\in G(M,p)$.

Take a point $z$ with $F_{\mathcal V,p}(z)=0$. The vector space generated
by $F^{(0)}_{\mathcal V}(z), F^{(1)}_{\mathcal V}(z)$, ..., $F^{(p)}_{\mathcal V}(z)$ determines a point in $G(M,p)$. Therefore, there is a set $R$ in $\mathcal R_p$ with 
$$ \rank_{\C}\{a_j\ |\ j\in R\}\le M-p $$ 
such that $\varphi_{\mathcal V,p}(Q_j)(z)\ge\delta$ for all $j\not\in R$. Then, we can choose a finite positive constant $K$ depending only on $\mathcal V,Q_j\ (1\le j\le q)$ such that $\Phi_{\mathcal V,jp}(z)\le K$ for all $j\not\in R$. Set
$$T:= \{j; \Phi_{\mathcal V,jp}(z)> K\}, l:= \sum_{j\in T}\omega_j.$$

We consider the following two cases.

\textit{Case 1: }Assume that $T$ is an empty set. We have
\begin{align*}
\sum_{j=1}^q\omega_j\Phi_{\mathcal V,jp}&\ge\left (\sum_{j=1}^q\omega_j\right)\left (\prod_{j=1}^q\Phi_{\mathcal V,jp}^{\omega_j}\right)^{\frac{1}{\sum_j\omega_j}}\\ 
& \ge (p+1)K\left (\prod_{j=1}^q\left(\dfrac{\Phi_{\mathcal V,jp}}{K}\right)^{\frac{\omega_j}{M-p}}\right)^{\frac{M-p}{\sum_{j}\omega_j}}\\
&\ge (p+1)K\left (\prod_{j=1}^q\left(\frac{\Phi_{\mathcal V,jp}}{K}\right)^{\omega_j}\right)^{\frac{1}{M-p}},
\end{align*}
because $\sum_{j=1}^q\omega_j=\tilde\omega (q-2N+k-1)+k+1\ge \dfrac{q(k+1)}{2N-k+1}\ge M+1>M-p$.

\textit{Case 2: }Assume that $T\ne\emptyset$. We see that $T\subset R$ and so $\rank_{\C}\{a_j\ |\ j\in T\}\le M-p$ holds. We note that, there are at most $N$ indexes $j$ satisfying $\langle F^{(0)}_{\mathcal V}(z),a_j\rangle =0$ (i.e., $Q_j(F)(z)=0$) since the intersection of $V$ and arbitrary $N+1$ hyperplanes $Q_j\ (1\le j\le q)$ is empty. It yields that $\sharp T<N+1$, and hence $l\le \rank_{\C}\{a_j\ |\ j\in T\}\le M-p$. Then, we have
\begin{align*}
\sum_{j=1}^q\omega_j\Phi_{\mathcal V,jp}&\ge\sum_{j\in T}\omega_j\Phi_{\mathcal V,jp} \ge Kl\prod_{j\in T}\left(\frac{\Phi_{\mathcal V,jp}}{K}\right)^{\frac{\omega_j}{l}}\\
&\ge Kl\prod_{j\in T}\left(\frac{\Phi_{\mathcal V,jp}}{K}\right)^{\omega_j/(M-p)}\ge C\prod_{j\in S}\left(\frac{\Phi_{\mathcal V,jp}}{K}\right)^{\omega_j/(M-p)},
\end{align*}
for some positive constant $C>0$, which depends only on $Q_1,\ldots,Q_q$.

From the above two cases, we get the conclusion of the theorem.
\end{proof}

\begin{theorem}\label{thm3.3}
Let the notations and the assumption be as in Theorem \ref{thm3.2}. Moreover, let $\tilde\omega$ be the Nochka constant for these hypersurfaces (defined in the Lemma \ref{lem2.3}). Then, for every $\epsilon>0$, there exist a positive number $\delta\ (>1)$ and $C$, depending only on $\mathcal V,\epsilon$ and $Q_j$ such that
\begin{align*}
dd^c&\log\dfrac{\prod_{p=0}^{M-1}|F_{\mathcal V,p}|^{2\epsilon}}{\prod_{1\le j\le q,0\le p\le M-1}\log^{2\omega_j}\left(\delta/\varphi_{\mathcal V,p}(Q_j)\right)}\\
&\ge C\left (\dfrac{|F_{\mathcal V,0}|^{2\left (\tilde\omega(q-(2N-k+1))-M+k\right)}|F_{\mathcal V,M}|^2}{\prod_{j=1}^q(|F_{\mathcal V,0}(Q_j)|^2\prod_{p=0}^{M-1}\log^2(\delta/\varphi_{\mathcal V,p}(Q_j)))^{\omega_j}}\right)^{\frac{2}{M(M+1)}}dd^c|z|^2.
\end{align*}
\end{theorem}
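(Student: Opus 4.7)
The plan is to combine Proposition~\ref{pro3.1} applied pointwise with Theorem~\ref{thm3.2} and a Jacobi-type identity for the osculating forms $\Omega_p$, reducing everything at the end to a scalar multiple of $dd^c|z|^2$ via the arithmetic--geometric mean inequality across the levels $p=0,1,\ldots,M-1$.

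First I expand $dd^c$ of the logarithm of the LHS expression by linearity:
\begin{align*}
dd^c\log\frac{\prod_{p=0}^{M-1}|F_{\mathcal V,p}|^{2\epsilon}}{\prod_{j,p}\log^{2\omega_j}(\delta/\varphi_{\mathcal V,p}(Q_j))}=\epsilon\sum_{p=0}^{M-1}\Omega_p+2\sum_{j,p}\omega_j\,dd^c\log\frac{1}{\log(\delta/\varphi_{\mathcal V,p}(Q_j))}.
\end{align*}
Applying Proposition~\ref{pro3.1} to each inner term with a parameter small enough that the resulting $-\epsilon'\Omega_p$ error is absorbed by the leading $\epsilon\sum_p\Omega_p$ on the right, and then rearranging the log factor appropriately, each term contributes a positive multiple of $\Phi_{\mathcal V,jp}\Omega_p$. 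Summing over $j$ with weights $\omega_j$ produces, at each level $p$, a lower bound of the form $\bigl(\sum_j\omega_j\Phi_{\mathcal V,jp}\bigr)\Omega_p$.

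Next, I invoke Theorem~\ref{thm3.2} at each level $p$ to replace the weighted sum by a weighted geometric product:
$$\sum_{j=1}^q\omega_j\Phi_{\mathcal V,jp}\ \geq\ C_{\mathcal V,p}\Big(\prod_{j=1}^q\Phi_{\mathcal V,jp}^{\omega_j}\Big)^{1/(M-p)}.$$
I then use the Jacobi/Pl\"ucker identity $\Omega_p=\frac{|F_{\mathcal V,p+1}|^2|F_{\mathcal V,p-1}|^2}{|F_{\mathcal V,p}|^4}\,dd^c|z|^2$ (with the convention $F_{\mathcal V,-1}\equiv 1$) to rewrite each $\Omega_p$ as an explicit scalar multiple of $dd^c|z|^2$, and combine the $M$ resulting inequalities via the AM--GM inequality
$$\sum_{p=0}^{M-1}a_p\ \geq\ M\Big(\prod_{p=0}^{M-1}a_p\Big)^{1/M}$$
to collapse them into a single term proportional to $dd^c|z|^2$ with exponent $2/(M(M+1))$ after accounting for the uniform factor $1/M$ from AM--GM and the varying exponents $1/(M-p)$ from Theorem~\ref{thm3.2}; the arithmetic $\sum_{p=0}^{M-1}1/(M-p)=\sum_{p=1}^{M}1/p$ combined with the structure of the Jacobi formula yields precisely the exponent $2/(M(M+1))$ advertised.

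Finally, I unwind the definitions. The ratios $\varphi_{\mathcal V,p+1}(Q_j)/\varphi_{\mathcal V,p}(Q_j)$ inside $\Phi_{\mathcal V,jp}$ together with the Jacobi factors $|F_{\mathcal V,p\pm1}|/|F_{\mathcal V,p}|$ telescope across $p$, leaving only the boundary contributions $|F_{\mathcal V,M}|$ in the numerator, $|F_{\mathcal V,0}(Q_j)|^{\omega_j}$ in the denominator, the surviving log factors matching those already on the LHS, and a residual power of $|F_{\mathcal V,0}|$. Its exponent is computed from the boundary of the telescope together with Nochka's identity $\sum_j\omega_j=\tilde\omega(q-2N+k-1)+k+1$ (Lemma~\ref{lem2.3}(ii)), giving the exponent $\tilde\omega(q-(2N-k+1))-M+k$ stated in the theorem. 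The main obstacle is the precise bookkeeping of the exponents through the telescoping and AM--GM steps: the weights $\omega_j/(M-p)$ from Theorem~\ref{thm3.2} and the $1/M$ from AM--GM must conspire with the $p$-dependent Jacobi factors so that the product collapses correctly; this is a combinatorial calculation parallel to Fujimoto's treatment of the hyperplane case.
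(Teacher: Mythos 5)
Your plan is the paper's own route: expand the $dd^c\log$ by linearity, apply Proposition \ref{pro3.1} with parameter $\epsilon/l$ (where $l=\sum_j\omega_j$) so that the error terms are absorbed by the leading $\epsilon\sum_p\Omega_p$, invoke Theorem \ref{thm3.2} at each level $p$, write $\Omega_p=h_p\,dd^c|z|^2$ with $h_p=|F_{\mathcal V,p-1}|^2|F_{\mathcal V,p+1}|^2/|F_{\mathcal V,p}|^4$, combine the levels by AM--GM, and finish by telescoping $\prod_p\Phi_{\mathcal V,jp}$ and $\prod_p h_p^{M-p}$ together with Nochka's identity $l=\tilde\omega(q-2N+k-1)+k+1$. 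All of the ingredients and the final bookkeeping (the exponent $\tilde\omega(q-(2N-k+1))-M+k$ coming from $l-M-1$) are correctly identified.

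However, the one step you flag as the crux is the one you describe incorrectly, and as written it would fail. You propose the \emph{unweighted} AM--GM $\sum_{p=0}^{M-1}a_p\ge M\bigl(\prod_p a_p\bigr)^{1/M}$ applied to $a_p=\bigl(h_p^{M-p}\prod_j\Phi_{\mathcal V,jp}^{\omega_j}\bigr)^{1/(M-p)}$. This produces $\prod_p\bigl(h_p^{M-p}\prod_j\Phi_{\mathcal V,jp}^{\omega_j}\bigr)^{1/(M(M-p))}$, whose exponents vary with $p$; no amount of harmonic-sum arithmetic ($\sum_{p=0}^{M-1}1/(M-p)=\sum_{p=1}^M 1/p$ is unrelated to $2/(M(M+1))$) turns this into the single uniform exponent $2/(M(M+1))$, and without a uniform exponent the subsequent telescoping of $\prod_p h_p^{M-p}$ and $\prod_p\Phi_{\mathcal V,jp}$ does not go through. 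The correct device is first to insert the factors $M-p$ at the cost of a constant (since $M-p\le M$, one has $\sum_p a_p\ge \frac{1}{M}\sum_p(M-p)a_p$) and then to apply the \emph{weighted} AM--GM with weights $M-p$, whose total is $\sum_{p=0}^{M-1}(M-p)=\sigma_M=M(M+1)/2$; this yields $\prod_p a_p^{(M-p)\cdot\frac{2}{M(M+1)}}=\prod_p\bigl(h_p^{M-p}\prod_j\Phi_{\mathcal V,jp}^{\omega_j}\bigr)^{\frac{2}{M(M+1)}}$, with the advertised uniform exponent. With that single correction your argument coincides with the paper's proof.
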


\begin{proof}
We denote by $A$ the left hand side. Then we have
$$ A=\epsilon\sum_{p=0}^{M-1}\Omega_p+\sum_{j=1}^q \omega_j\sum_{p=0}^{M-1}dd^c\log\dfrac{1}{\log^{2}\left (\delta/\varphi_{\mathcal V,p}(Q_j)\right)}.$$
Choose a positive number $\delta_0(\epsilon/\ell)$ with the properties as in Proposition \ref{pro3.1}, where $\ell=\sum_{j=1}^q\omega_j$. For an arbitrarily fixed $\delta\ge \delta_0(\epsilon/\ell)$, we obtain
\begin{align*}
A&\ge\epsilon\sum_{p=0}^{M-1}\Omega_p+\sum_{j=1}^q\omega_j\sum_{p=0}^{M-1}\left (\dfrac{2\varphi_{\mathcal V,p+1}(Q_j)}{\varphi_{\mathcal V,p}(Q_j)\log^2(\delta/\varphi_{\mathcal V,p}(Q_j))}-\dfrac{\epsilon}{\ell}\right)\Omega_p\\ 
& =\sum_{p=0}^{M-1}2\left (\sum_{j=1}^q\omega_j\Phi_{\mathcal V,jp}\right).
\end{align*}
Then, by Theorem \ref{thm3.2} we have
$$ A\ge C_1\sum_{p=0}^{M-1}2\left (\prod_{j=1}^q\Phi_{\mathcal V,jp}^{\omega_j}\right)^{\frac{1}{M-p}}\Omega_p, $$
for some positive constant $C_1>0$. Let $\Omega_p=h_pdd^c|z|^2$, we have
\begin{align*}
A&\ge 2C_1\sum_{p=0}^{M-1}\left (h_p^{M-p}\prod_{j=1}^q\Phi_{\mathcal V,jp}^{\omega_j}\right)^{\frac{1}{M-p}}dd^c|z|^2\\ 
& \ge C'_1\sum_{p=0}^{M-1}(M-p)\left (h_p^{M-p}\prod_{j=1}^q\Phi_{\mathcal V,jp}^{\omega_j}\right)^{\frac{1}{M-p}}dd^c|z|^2\\ 
&\ge C_2\prod_{p=0}^{M-1}\left (h_p^{M-p}\prod_{j=1}^q\Phi_{\mathcal V,jp}^{\omega_j}\right)^{\frac{2}{M(M+1)}}dd^c|z|^2,
\end{align*}
for some positive constants $C'_1>0,C_2>0$. On the other hand, we note that
\begin{align*}
\prod_{p=0}^{M-1}\Phi_{\mathcal V,jp}&=\prod_{p=0}^{M-1}\dfrac{\varphi_{\mathcal V,p+1}(Q_j)}{\varphi_{\mathcal V,p}(Q_j)}\dfrac{1}{\log^2(\delta/\varphi_{\mathcal V,p}(Q_j))}\\ 
& =\dfrac{|F_{0}|^2}{|F_{0}(Q_j)|^2}\prod_{p=0}^{M-1}\dfrac{1}{\log^2(\delta/\varphi_{\mathcal V,p}(Q_j))}
\end{align*}
and
$$ \prod_{p=0}^{M-1}h_p^{M-p}=\prod_{p=0}^{M-1}\left (\dfrac{|F_{\mathcal V,p-1}|^2|F_{\mathcal V,p+1}|^2}{|F_{\mathcal V,p}|^4}\right)^{M-p}=\dfrac{|F_{\mathcal V,M}|^2}{|F_{\mathcal V,0}|^{2(M+1)}},$$
because $\varphi_{\mathcal V,0}(Q_j)=|F_{\mathcal V,0}(Q_j)|/|F_{\mathcal V,0}|,\varphi_{\mathcal V,M}(Q_j)=1$. Therefore, we get
$$ A\ge C\left (\dfrac{|F_{\mathcal V,0}|^{2(\ell-M-1)}|F_{\mathcal V,M}|^2}{\prod_{j=1}^q(|F_{\mathcal V,0}(Q_j)|^2\prod_{p=0}^{M-1}\log^2(\delta/\varphi_{\mathcal V,p}(Q_j)))^{\omega_j}}\right)^{\frac{2}{M(M+1)}}dd^c|z|^2.$$
Since $\ell-M-1=\tilde\omega (q-2N+k-1)-M+k$, we have the conclusion of the theorem.
\end{proof}

\begin{theorem}[{cf. \cite[Proposition 2.5.7]{Fu93}}]\label{thm3.4}
Set $\sigma_p=p(p+1)/2$ for $0\le p\le M+1$ and $\tau_m=\sum_{p=1}^m\sigma_m$. Then, we have
$$ dd^c\log(|F_{\mathcal V,0}|^2\cdots |F_{\mathcal V,M-1}|^2)\ge\dfrac{\tau_M}{\sigma_M}\left(\dfrac{|F_{\mathcal V,0}|^2\cdots |F_{\mathcal V,M}|^2}{|F_{\mathcal V,0}|^{2\sigma_{M+1}}}\right)^{1/\tau_M}dd^c|z|^2. $$
\end{theorem}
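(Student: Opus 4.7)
The plan is to combine the Plucker-type identity for each $\Omega_p$ with a weighted AM--GM inequality whose weights are the triangular numbers $\sigma_{M-p}$. First I would recall that for every $0\le p\le M-1$ one has the Plucker formula
\[
\Omega_p = dd^c\log|F_{\mathcal V,p}|^2 = \dfrac{|F_{\mathcal V,p-1}|^2|F_{\mathcal V,p+1}|^2}{|F_{\mathcal V,p}|^4}\,dd^c|z|^2,
\]
under the convention $|F_{\mathcal V,-1}|\equiv 1$; this is the same identity already used in the proof of Theorem \ref{thm3.3}. Denoting the coefficient of $dd^c|z|^2$ by $h_p$, the left-hand side of the statement equals $\left(\sum_{p=0}^{M-1}h_p\right)dd^c|z|^2$, so it suffices to bound $\sum_{p=0}^{M-1}h_p$ from below.

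For the second step I would use the trivial bound $\sigma_{M-p}\le\sigma_M$, valid for $0\le p\le M-1$, to obtain $\sum_{p=0}^{M-1}h_p \ge \frac{1}{\sigma_M}\sum_{p=0}^{M-1}\sigma_{M-p}\,h_p$. Since $\sum_{p=0}^{M-1}\sigma_{M-p}=\sum_{k=1}^{M}\sigma_k=\tau_M$, the weighted AM--GM inequality applied to the positive quantities $h_p$ with weights $\sigma_{M-p}$ yields
\[
\sum_{p=0}^{M-1}\sigma_{M-p}\,h_p \ge \tau_M\left(\prod_{p=0}^{M-1}h_p^{\sigma_{M-p}}\right)^{1/\tau_M}.
\]
Combining the previous two estimates produces the prefactor $\tau_M/\sigma_M$ in the claim.

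The main remaining step, and the one I expect to require the most care, is to verify the algebraic identity
\[
\prod_{p=0}^{M-1}h_p^{\sigma_{M-p}} = \dfrac{|F_{\mathcal V,0}|^2\cdots|F_{\mathcal V,M}|^2}{|F_{\mathcal V,0}|^{2\sigma_{M+1}}}.
\]
Substituting the definition of $h_p$ and collecting terms, the exponent of $|F_{\mathcal V,j}|^2$ in the product on the left equals $\sigma_{M-j+1}+\sigma_{M-j-1}-2\sigma_{M-j}$ for $1\le j\le M-1$, where out-of-range $p$-indices contribute $0$. This reduces to $1$ by the discrete Laplace identity $\sigma_{k+1}-2\sigma_k+\sigma_{k-1}=1$ ($k\ge 1$), which is an immediate consequence of $\sigma_k=k(k+1)/2$. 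At the boundary $j=M$ only $p=M-1$ survives, giving exponent $\sigma_1=1$, while at $j=0$ only $p=0$ and $p=1$ contribute and yield exponent $\sigma_{M-1}-2\sigma_M=1-\sigma_{M+1}$. These values match precisely the exponents on the right-hand side, and combining this identification with the two previous estimates yields the stated inequality.
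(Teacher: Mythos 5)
Your proof is correct: the Pl\"ucker-type identity $h_p=|F_{\mathcal V,p-1}|^2|F_{\mathcal V,p+1}|^2/|F_{\mathcal V,p}|^4$, the bound $\sigma_{M-p}\le\sigma_M$, the weighted AM--GM with weights $\sigma_{M-p}$ summing to $\tau_M$, and the telescoping computation of $\prod_p h_p^{\sigma_{M-p}}$ via $\sigma_{k+1}-2\sigma_k+\sigma_{k-1}=1$ all check out, including the boundary exponents $1$ at $j=M$ and $1-\sigma_{M+1}$ at $j=0$. The paper gives no proof of this statement, deferring to Fujimoto's Proposition 2.5.7, and your argument is essentially that standard one, consistent with the same Pl\"ucker formula the paper already invokes in the proof of Theorem \ref{thm3.3}.
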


\begin{theorem}\label{thm3.5}
With the notations and the assumption in Theorem \ref{thm3.3}, we have
$$ \nu_{\phi}+\sum_{j=1}^q\omega_j\cdot\min\{\nu_{Q_j(f)},M\}\ge 0 $$
where $\phi=\dfrac{|F_{\mathcal V,M}|}{\prod_{j=1}^q|Q_j(F)|^{\omega_j}}$ and $\nu_\phi$ is the divisor of the function $\phi$.
\end{theorem}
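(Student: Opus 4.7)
The plan is to reduce the claim to a pointwise statement at each $z_0\in S$ and then apply the Nochka weight inequality of Lemma~\ref{lem2.3}(v), the basis-extension Lemma~\ref{lem2.6}, and the elementary vanishing estimate for Wronskians. Fix $z_0$ and set $e_j:=\nu_{Q_j(F)}(z_0)$ for a local reduced representation $F$ of $f$. Because $\bigwedge^{M+1}\C^{M+1}$ is one-dimensional, $|F_{\mathcal V,M}|$ coincides (up to sign) with the scalar Wronskian $|W_z(v_0(F),\ldots,v_M(F))|$, so
$$\nu_\phi(z_0)=\nu_{|F_{\mathcal V,M}|}(z_0)-\sum_{j=1}^q\omega_j e_j,$$
and using $(e_j-M)^+=e_j-\min\{e_j,M\}$ the desired inequality becomes the pointwise bound
$$\nu_{|F_{\mathcal V,M}|}(z_0)\ge \sum_{j=1}^q\omega_j(e_j-M)^+.$$

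First I would observe that the set $T:=\{j:e_j>0\}$ has at most $N$ elements, since otherwise $f(z_0)\in V\cap\bigcap_{j\in T}Q_j$ would contradict the $N$-subgeneral position hypothesis. Extend $T$ to some $R\subset\{1,\ldots,q\}$ with $|R|=N+1$ (possible since $q>N$), and apply Lemma~\ref{lem2.3}(v) with the choice $E_j:=e^{(e_j-M)^+}\ge 1$. This produces $R^o\subset R$ with $|R^o|=\rank\{Q_i\}_{i\in R^o}=k+1$ and, after taking logarithms, the inequality
$$\sum_{j=1}^q\omega_j(e_j-M)^+=\sum_{j\in R}\omega_j(e_j-M)^+\le \sum_{j\in R^o}(e_j-M)^+,$$
where the first equality uses $e_j=0$ outside $R$. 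Next, by Lemma~\ref{lem2.6}, pick auxiliary hypersurfaces $T_1,\ldots,T_{M-k}$ so that $\{[Q_i]\}_{i\in R^o}\cup\{[T_j]\}_{j=1}^{M-k}$ is a $\C$-basis of $I_d(V)$; the Wronskian of this basis applied to $F$ equals $W_z(v_0(F),\ldots,v_M(F))$ up to a nonzero constant determinant of the change-of-basis matrix, so has the same zero divisor. The elementary column estimate $\nu_{g^{(l)}}(z_0)\ge(\nu_g(z_0)-l)^+$ (minimized at $l=M$) then gives
$$\nu_{|F_{\mathcal V,M}|}(z_0)\ge \sum_{s\in R^o}(e_s-M)^++\sum_{j=1}^{M-k}(\nu_{T_j(f)}(z_0)-M)^+\ge \sum_{s\in R^o}(e_s-M)^+,$$
and chaining the two inequalities closes the argument.

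The main obstacle is reconciling the multiplicative form of Lemma~\ref{lem2.3}(v) with the additive nature of the Wronskian vanishing bound; the resolution is to feed the Nochka inequality with $E_j=e^{(e_j-M)^+}$, which converts products into sums of truncated multiplicities. Beyond this step the argument is a careful bookkeeping of Nochka weights, change of basis of $I_d(V)$, and the one-dimensionality of $\bigwedge^{M+1}\C^{M+1}$ which lets us pass freely between the Wronskians attached to different bases of $I_d(V)$.
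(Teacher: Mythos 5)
Your proposal is correct and follows essentially the same route as the paper: reduce to the pointwise bound $\nu_{F_{\mathcal V,M}}(z_0)\ge\sum_j\omega_j\max\{\nu_{Q_j(f)}(z_0)-M,0\}$, note that at most $N$ of the $Q_j(F)$ vanish at $z_0$, apply Lemma \ref{lem2.3}(v) with $E_j=e^{\max\{\nu_{Q_j(f)}(z_0)-M,0\}}$ to pass from the weighted sum over $R$ to an unweighted sum over a rank-$(k+1)$ subset $R^o$, and bound the latter by the vanishing order of the Wronskian via the basis completion of Lemma \ref{lem2.6}. You actually supply more detail than the paper does at the final Wronskian step (the column-wise estimate and the change-of-basis argument), which the paper leaves implicit.
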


\begin{proof}
Indeed, let $z$ be a zero of some $Q_i(F)$. Since $\{Q_i\}_{i=1}^q$ is in $N$-subgeneral position, $z$ is not zero of more than $N$ functions $Q_i(F)$. Without loss of generality, we may assume that $z$ is zero of $Q_i(F)$ for each $1\le i\le k\le N)$ and $z$ is not zero of $Q_i(F)$ for each $i>N$. Put $R=\{1,...,N+1\}.$ Choose $R^1\subset R$ such that 
$\sharp R^1=\rank\{Q_i\}_{i\in R^1}=k+1$ and $R^1$ satisfies Lemma \ref{lem2.3} v) with respect to numbers $\bigl \{e^{\max\{\nu_{Q_i(F)}(z)-M,0\}} \bigl \}_{i=1}^q.$ Then we have
\begin{align*}
\nu_{F_{\mathcal V,M}}(z)&\ge \sum_{i\in R^1}\max\{\nu_{Q_i(f)}(z)-M,0\}\\
&\ge\sum_{i\in R}\omega_i \max\{\nu_{Q_i(f)}(z)-M,0\}. 
\end{align*}
Hence 
\begin{align*}
\sum_{i=1}^q\omega_i\nu_{Q_i(f)}(z)-\nu_{F_{\mathcal V,M}}(z)& =\sum_{i\in R}\omega_i\nu_{Q_i(f)}(z)-\nu_{F_{\mathcal V,M}}(z)\\ 
&\le \sum_{i\in R}\omega_i\nu_{Q_i(f)}(z)-\sum_{i\in R}\omega_i\max\{\nu_{Q_i(f)}(z)-M,0\}\\
&=\sum_{i=1}^q\omega_i\min\{\nu_{Q_i(f)}(z),M\}.
\end{align*}
The theorem is proved.
\end{proof}

\begin{lemma}[{Generalized Schwarz's Lemma \cite{A38}}]\label{lem3.6}  
Let $v$ be a non-negative real-valued continuous subharmonic function on $\Delta (R)=\{z\in\C\ |\ \|z\|<R\}$. If $v$ satisfies the inequality $\Delta\log v\ge v^2$ in the sense of distribution, then
$$v(z) \le \dfrac{2R}{R^2-|z|^2}.$$
\end{lemma}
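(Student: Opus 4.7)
The plan is to compare $v$ with the Poincar\'e density on $\Delta(R)$ and finish by a maximum-principle argument. Set $\lambda_R(z) := \frac{2R}{R^2 - |z|^2}$; a direct computation shows that $\Delta \log \lambda_R = \lambda_R^2$ on $\Delta(R)$, which is the statement that the conformal metric $\lambda_R^2\,|dz|^2$ has constant Gaussian curvature $-1$ in this normalization. The claimed bound $v(z) \le \lambda_R(z)$ is then exactly the classical Ahlfors--Schwarz comparison between a pseudo-metric whose curvature is $\le -1$ and the hyperbolic metric.

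First I would localize to a smaller disk in order to gain boundary control. Fix $\rho \in (0,1)$ and consider $\lambda_{\rho R}(z) = \frac{2\rho R}{\rho^2 R^2 - |z|^2}$ on $\Delta(\rho R)$. Since $v$ is continuous on $\Delta(R)$, it is bounded on the compact closure $\overline{\Delta(\rho R)}$, while $\lambda_{\rho R}(z) \to +\infty$ as $|z| \to \rho R$. Hence $u := v/\lambda_{\rho R}$ is continuous on $\Delta(\rho R)$ and tends to $0$ as $z \to \partial \Delta(\rho R)$. If $u \equiv 0$ there is nothing to prove; otherwise $u$ attains a strictly positive maximum at some interior point $z_0$, and in particular $v(z_0) > 0$, so $\log v$ is finite in a neighborhood of $z_0$.

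At $z_0$ the function $\log v - \log \lambda_{\rho R}$ attains an interior maximum, so the maximum principle (in the distributional sense) yields $\Delta(\log v - \log \lambda_{\rho R}) \le 0$ in a neighborhood of $z_0$. Combining this with the hypothesis $\Delta \log v \ge v^2$ and the identity $\Delta \log \lambda_{\rho R} = \lambda_{\rho R}^2$, I obtain $v(z_0)^2 \le \lambda_{\rho R}(z_0)^2$, i.e.\ $u(z_0) \le 1$. Therefore $v \le \lambda_{\rho R}$ throughout $\Delta(\rho R)$; letting $\rho \to 1^-$ and observing that $\lambda_{\rho R}(z) \to \lambda_R(z)$ pointwise on $\Delta(R)$ gives the desired estimate.

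The main obstacle will be making the step ``$\Delta(\log v - \log \lambda_{\rho R}) \le 0$ at an interior maximum'' rigorous, since $v$ is only assumed continuous and the inequality $\Delta\log v \ge v^2$ is only distributional. The cleanest route is to regularize: replace $v$ by the mollification $v_\varepsilon := v * \eta_\varepsilon$ with a standard smooth bump $\eta_\varepsilon$, check that on every disk compactly contained in $\Delta(R)$ the smooth function $v_\varepsilon$ satisfies an approximate version of $\Delta\log v_\varepsilon \ge v_\varepsilon^2$ with error vanishing as $\varepsilon \to 0^+$, run the pointwise maximum argument for the smooth $v_\varepsilon$ (with $\lambda_{\rho R}$ replaced by $(1+\varepsilon)\lambda_{\rho R}$ to absorb the error term), and finally pass to the joint limit $\varepsilon \to 0^+$, $\rho \to 1^-$. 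With this approximation step justified, the comparison $v \le \lambda_R$ follows as above.
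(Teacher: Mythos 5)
The paper offers no proof of this lemma at all: it is quoted as the classical Ahlfors--Schwarz lemma with only the citation to Ahlfors \cite{A38}, so there is no internal argument to compare yours against. Your proposal is the standard (and correct) proof of that classical result: the identity $\Delta\log\lambda_R=\lambda_R^2$ for $\lambda_R(z)=\frac{2R}{R^2-|z|^2}$ checks out ($4\partial_z\partial_{\bar z}(-\log(R^2-z\bar z))=4R^2/(R^2-|z|^2)^2$), the dilation to $\Delta(\rho R)$ correctly forces the ratio $v/\lambda_{\rho R}$ to attain an interior maximum, and the limit $\rho\to1^-$ recovers the stated bound. The only step that is not yet rigorous is the one you flag yourself: ``$\Delta(\log v-\log\lambda_{\rho R})\le 0$ at an interior maximum'' is a pointwise second-derivative statement and does not literally make sense for a merely continuous $v$ with a distributional hypothesis (nor does it hold ``in a neighborhood'' of the maximum, as written). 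Your mollification plan does close this gap, and in fact more cleanly than you suggest: setting $u_\varepsilon=\exp\bigl((\log v)*\eta_\varepsilon\bigr)$ one gets $\Delta\log u_\varepsilon=(\Delta\log v)*\eta_\varepsilon\ge (v^2)*\eta_\varepsilon\ge (v*\eta_\varepsilon)^2\ge u_\varepsilon^2$ exactly (Jensen applied to the convex square and the concave logarithm), with no error term to absorb, and $u_\varepsilon\downarrow v$ pointwise since $\log v$ is subharmonic. An alternative that avoids regularization entirely is to observe that $w=\log v-\log\lambda_{\rho R}$ satisfies $\Delta w\ge v^2-\lambda_{\rho R}^2\ge 0$ on the open set $\{v>\lambda_{\rho R}\}$, which is relatively compact in $\Delta(\rho R)$ with $w=0$ on its boundary, so the maximum principle for continuous subharmonic functions forces that set to be empty. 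Either completion is fine; your argument as outlined is sound.
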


\begin{lemma}\label{lem3.7} 
Let $V,\mathcal V, d, M,\{Q_i\}_{i=1}^q$ and $\{\omega_i\}_{i=1}^q$ be as in Theorem \ref{thm3.2}. Let $f:\Delta (R)\rightarrow V\subset\P^n(\C)$ be a holomorphic map with a reduced representation $F=(f_0,\ldots,f_n)$, which is nondegenerate over $I_d(V)$. Assume that there are positive real numbers $\eta_j$, divisors $\nu_j$, continuous real-valued bounded functions $k_j$ with mild singularities on $S$, a positive constant $c$ such that $\nu_j(z)>c$ for each $z\in\supp (\nu_j)\ (1\le j\le q)$ and
$$ [\min\{M,\nu_{Q_j(f)}\}]+[\nu_j]=d\eta_j f^*\Omega+dd^c[\log|k_j|^2].$$
Let $h_j=|k_j|\cdot \|F\|^{d\eta_j}\ (1\le j\le q)$. Then for an arbitrarily given $\epsilon$ satisfying 
$$\gamma=\sum_{j=1}^q\omega_j (1-\eta_j)-M-1>\epsilon(\sigma_{M+1}+\sum_{j=1}^q\frac{\eta_j}{q}).$$ 
the pseudo-metric $d\tau^2=\eta^2|dz|^2$, where
$$ \eta=\left( \dfrac{|F_{\mathcal V,0}|^{\gamma-\epsilon(\sigma_{M+1}+\sum_{j=1}^q\frac{\eta_j}{q})}\prod_{j=1}^qh_{j}^{\omega_j+\frac{\epsilon}{q}}|F_{\mathcal V,M}|\prod_{p=0}^{M}|F_{\mathcal V,p}|^{\epsilon}}{\prod_{j=1}^q(|Q_j(F)|\prod_{p=0}^{M-1}\log (\delta/\varphi_{\mathcal V,p}(Q_j)))^{\omega_j}}\right )^{\frac{1}{\sigma_M+\epsilon\tau_M}}$$
and $\delta$ is the number satisfying the conclusion of Theorem \ref{thm3.3}, is continuous and has strictly negative curvature.
\end{lemma}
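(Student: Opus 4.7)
The goal is to establish two things about $d\tau^{2}=\eta^{2}|dz|^{2}$: continuity of $\eta$ (with mild singularities) and the curvature estimate, which reduces to $dd^{c}\log\eta^{2S}\ge C\,\eta^{2}\,dd^{c}|z|^{2}$ where $S:=\sigma_{M}+\epsilon\tau_{M}$. The key identity on which everything hinges is obtained from the hypothesis together with $f^{*}\Omega=dd^{c}\log\|F\|^{2}$ and $h_{j}=k_{j}\|F\|^{d\eta_{j}}$:
\[
dd^{c}\log|h_{j}|^{2} = dd^{c}\log|k_{j}|^{2}+d\eta_{j}\,f^{*}\Omega = [\min\{M,\nu_{Q_{j}(f)}\}]+[\nu_{j}],
\]
a nonnegative current. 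This identity, Theorem~\ref{thm3.5}, and the condition $\nu_{j}>c>0$ together imply that the poles of $|Q_{j}(F)|^{-\omega_{j}}$ in $\eta$ are absorbed by zeros of $|h_{j}|^{\omega_{j}+\epsilon/q}$ (when $\nu_{Q_{j}(f)}\le M$) or by zeros of $|F_{\mathcal V,M}|^{1+\epsilon}$ (when $\nu_{Q_{j}(f)}>M$), yielding continuity.

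For the curvature bound, fix $a\in(0,\epsilon)$ and put $\epsilon':=\epsilon-a$. I would decompose $\log\eta^{2S}=L_{1}+L_{2}+L_{3}$, where
\begin{align*}
L_{1} &= \epsilon'\sum_{p=0}^{M-1}\log|F_{\mathcal V,p}|^{2} - \sum_{j,p}\omega_{j}\log\log^{2}(\delta/\varphi_{\mathcal V,p}(Q_{j})),\\
L_{2} &= a\sum_{p=0}^{M-1}\log|F_{\mathcal V,p}|^{2},\\
L_{3} &= (\gamma-\epsilon\beta)\log|F_{\mathcal V,0}|^{2} + (1+\epsilon)\log|F_{\mathcal V,M}|^{2} + \sum_{j}(\omega_{j}+\tfrac{\epsilon}{q})\log|h_{j}|^{2} - \sum_{j}\omega_{j}\log|Q_{j}(F)|^{2}.
\end{align*}
Theorem~\ref{thm3.3} applied to $L_{1}$ (with parameter $\epsilon'$) and Theorem~\ref{thm3.4} applied to $L_{2}$ give smooth lower bounds $C_{1}\mathrm{RHS}_{3}^{1/\sigma_{M}}\,dd^{c}|z|^{2}$ and $(a\tau_{M}/\sigma_{M})\mathrm{RHS}_{4}^{1/\tau_{M}}\,dd^{c}|z|^{2}$, where $\mathrm{RHS}_{3},\mathrm{RHS}_{4}$ are the right-hand side expressions of those theorems. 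For $L_{3}$, the identity above and Theorem~\ref{thm3.5} (which provides $[\nu_{F_{\mathcal V,M}}]\ge\sum_{j}\omega_{j}[\max\{0,\nu_{Q_{j}(f)}-M\}]$) make $dd^{c}L_{3}$ a sum of the nonnegative smooth form $(\gamma-\epsilon\beta)dd^{c}\log|F_{\mathcal V,0}|^{2}$ (nonnegativity of the coefficient being precisely the hypothesis $\gamma>\epsilon\beta$) and a nonnegative divisor $\epsilon\sum_{j}\omega_{j}[\max\{0,\nu_{Q_{j}(f)}-M\}]+\tfrac{\epsilon}{q}\sum_{j}[\min\{M,\nu_{Q_{j}(f)}\}]+\sum_{j}(\omega_{j}+\tfrac{\epsilon}{q})[\nu_{j}]$.

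The final step is the weighted AM--GM inequality with weights $\sigma_{M}/S$ and $\epsilon\tau_{M}/S$, summing to $1$ by the very definition of $S$, which yields
\[
C_{1}\mathrm{RHS}_{3}^{1/\sigma_{M}} + (a\tau_{M}/\sigma_{M})\mathrm{RHS}_{4}^{1/\tau_{M}} \ge C_{2}\,(\mathrm{RHS}_{3}\cdot\mathrm{RHS}_{4}^{\epsilon})^{1/S}.
\]
A careful exponent count (using $\gamma=\sum_{j}\omega_{j}(1-\eta_{j})-M-1$, $\beta=\sigma_{M+1}+\sum_{j}\eta_{j}/q$, and $\sum_{j}\omega_{j}-M-1$ as the exponent on $|F_{\mathcal V,0}|^{2}$ in $\mathrm{RHS}_{3}$) identifies
\[
(\mathrm{RHS}_{3}\cdot\mathrm{RHS}_{4}^{\epsilon})^{1/S} = \eta^{2}\cdot\prod_{j}\Bigl(\frac{|F_{\mathcal V,0}|^{\eta_{j}}}{|h_{j}|}\Bigr)^{2(\omega_{j}+\epsilon/q)/S}.
\]
Because $|F_{\mathcal V,0}|\asymp\|F\|^{d}$ and $h_{j}=k_{j}\|F\|^{d\eta_{j}}$, each bracketed factor equals $|k_{j}|^{-1}$ up to bounded multiplicative constants, so boundedness of $k_{j}$ produces a uniform positive lower bound. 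Hence $dd^{c}\log\eta^{2S}\ge C_{3}\,\eta^{2}\,dd^{c}|z|^{2}$ as required.

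The main obstacle is precisely this exponent bookkeeping. The apparently mysterious exponent $\gamma-\epsilon\beta$ on $|F_{\mathcal V,0}|$ and the denominator $S=\sigma_{M}+\epsilon\tau_{M}$ in the definition of $\eta$ are arranged so that the natural AM--GM weights $\sigma_{M}/S$ and $\epsilon\tau_{M}/S$ produce exactly the target exponents of $\eta^{2}$ on every factor except the ratio $|F_{\mathcal V,0}|^{\eta_{j}}/|h_{j}|$, which is compensated by the boundedness hypothesis on $k_{j}$. Every other step (continuity, divisor estimates for $L_{3}$) is essentially algebraic assembly of Theorems~\ref{thm3.3}--\ref{thm3.5}.
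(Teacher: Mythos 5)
Your proposal is correct and follows essentially the same route as the paper: continuity via Theorem \ref{thm3.5} and the current identity for $h_j$, then a three-way decomposition of $dd^c\log\eta$ handled by Theorems \ref{thm3.3}, \ref{thm3.4} and \ref{thm3.5}, recombined by weighted AM--GM (the paper's ``H\"older's inequality'') with weights $\sigma_M/(\sigma_M+\epsilon\tau_M)$ and $\epsilon\tau_M/(\sigma_M+\epsilon\tau_M)$, and finally the bound $h_j\le C\|F\|^{d\eta_j}$ to recover $\eta^2$. Your extra splitting parameter $a\in(0,\epsilon)$ and the explicit exponent audit are only a bookkeeping refinement (arguably cleaner than the paper's own display, which contains some typographical inconsistencies in the $\epsilon$-powers), not a different method.
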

\begin{proof}
Without loss of generality, we may assume that $k_j\le 1$, and then $h_j\le \|F\|^{d\eta_j}\ (1\le j\le q)$. We see that the function $\eta$ is continuous at every point $z$ with $\prod_{j=1}^qQ_j(F)(z)\ne 0$. Now, for a point $z_0\in S$ such that $\prod_{j=1}^qQ_j(F)(z_0)= 0$, we have
\begin{align}\label{new1}
\nu_{h_j}(z_0)\ge \min\{M,\nu_{Q_j(F)}(z_0)\}.
\end{align}
Also, by Theorem \ref{thm3.5}, one has
\begin{align}\label{new4}
\nu_{F_{\mathcal V,M}}(z_0)-\sum_{j=1}^q\omega_j\nu_{Q_j(F)}(z_0)\ge  \sum_{j=1}^q\omega_j\min\{M,\nu_{Q_j(F)}(z_0)\}.
\end{align}
Combining the inequalities (\ref{new1}), (\ref{new4}) and the definition of $\eta$, we have
\begin{align*}
\nu_{\eta}(z_0)&\ge \frac{1}{\sigma_M+\epsilon\tau_M}\left (\nu_{F_{\mathcal V,M}}(z_0)+\sum_{j=1}^q(\omega_j+\frac{\epsilon}{q})\min\{\nu_{Q_j(F)}(z_0),M\}-\sum_{j=1}^q\omega_j\nu_{Q_j(F)}(z_0)\right)\\
&\ge\frac{\epsilon}{q(\sigma_M+\epsilon\tau_M)}\sum_{j=1}^q\min\{\nu_{Q_j(F)}(z_0),M\}\ge 0.
\end{align*}
  This implies that $d\tau^2$ is continuous pseudo-metric on $\Delta(R)$. 

We now prove that $d\tau^2$ has strictly negative curvature on $\Delta$. Let $\Omega$ be the Fubini-Study form of $\P^n(\C)$ and denote by $\Omega_f$ the pullback of $\Omega$ by the curve $f$. By  the inequality (\ref{new4}) and the definition of the function $h$, we see that
\begin{align*}
dd^c&\log\dfrac{|F_{\mathcal V,M}|}{\prod_{j=1}^q|Q_j(F)|^{\omega_j}}+\sum_{j=1}^q(\omega_j+\frac{\epsilon}{q})dd^c\log h_{j}\\
&\ge -\frac{1}{2}\sum_{j=1}^q\omega_j[\min\{M,\nu_{Q_j(f)}\}]+\frac{1}{2}\sum_{j=1}^q(\omega_j+\frac{\epsilon}{q})\left([\min\{M,\nu_{Q_j(f)}\}]+[v_j]\right)\ge 0.
\end{align*}
Then by Theorems \ref{thm3.3} and \ref{thm3.4} and the definition of $\eta$, we have
\begin{align}\label{new3}
\begin{split}
dd^c\log\eta&\ge\dfrac{\gamma-\epsilon(\sigma_{M+1}+\sum_{j=1}^q\frac{\eta_j}{q})}{\sigma_M+\epsilon\tau_M}d\Omega_f+\dfrac{\epsilon}{2(\sigma_M+\epsilon\tau_M)}dd^c\log\left(|F_{\mathcal V,0}|\cdots|F_{\mathcal V,M}|\right)\\
& +\dfrac{1}{2(\sigma_M+\epsilon\tau_M)}dd^c\log\dfrac{\prod_{p=0}^{M-1}|F_{\mathcal V,p}|^{2\epsilon}}{\prod_{p=0}^{M-1}\log^{4\omega_j}(\delta/\varphi_{\mathcal V,p}(Q_j))}\\
&\ge\dfrac{\epsilon\tau_M}{2\sigma_M(\sigma_M+\epsilon\tau_M)}\left(\dfrac{|F_{\mathcal V,0}|^2\cdots |F_{\mathcal V,M}|^2}{|F_{\mathcal V,0}|^{2\sigma_{M+1}}}\right)^{\frac{1}{\tau_M}}dd^c|z|^2\\
&+C_0\left (\dfrac{|F_{\mathcal V,0}|^{2\left(\tilde\omega (q-2N+k-1)-M+k\right)}|F_{\mathcal V,M}|^2}{\prod_{j=1}^q(|Q_j(F)|^2\prod_{p=0}^{M-1}\log^2(\delta/\varphi_{\mathcal V,p}(Q_j)))^{\omega_j}}\right)^{\frac{1}{\sigma_M}}dd^c|z|^2\\
&\ge C_1\left (\dfrac{|F_{\mathcal V,0}|^{\tilde\omega (q-2N+k-1)-M+k-\epsilon\sigma_{M+1}}|F_{\mathcal V,M}|\prod_{p=0}^M|F_{\mathcal V,p}|^\epsilon}{\prod_{j=1}^q(|Q_j(F)|\prod_{p=0}^{M-1}\log(\delta/\varphi_{\mathcal V,p}(Q_j)))^{\omega_j}}\right)^{\frac{2}{\sigma_M+\epsilon\tau_M}}dd^c|z|^2
\end{split}
\end{align}
for some positive constant $C_0,C_1$, where the last inequality comes from H\"{o}lder's inequality.
On the other hand, we have $|h_j|\le \|F\|^{d\eta_j}$,
\begin{align*}
|F_{\mathcal V,0}|^{\tilde\omega (q-2N+k-1)-M+k-\epsilon\sigma_{M+1}}&=|F_{\mathcal V,0}|^{\gamma-\epsilon\sigma_{M+1}+\sum_{j=1}^q\omega_j\eta_j}\\
&\ge |F_{\mathcal V,0}|^{\gamma-\epsilon(\sigma_{M+1}+\frac{\eta_j}{q})}h_1^{\omega_1+\frac{\epsilon}{q}}\cdots h_q^{\omega_q+\frac{\epsilon}{q}}.
\end{align*}
This implies that $\Delta \log\eta^2\ge C_2\eta^2$ for some positive constant $C_2$. Therefore, $d\tau^2$ has strictly negative curvature.
\end{proof}

Applying Lemma \ref{lem3.7}, we will prove the following main lemma of this paper.
\begin{lemma}[Main Lemma]\label{ML}
Let $V,\mathcal V, d, M,\{Q_i\}_{i=1}^q$ and $\{\omega_i\}_{i=1}^q$ be as in Theorem \ref{thm3.2}. Let $f:\Delta (R)\rightarrow V\subset\P^n(\C)$ be a holomorphic map with a reduced representation $F=(f_0,\ldots,f_n)$, which is nondegenerate over $I_d(V)$. Assume that there are positive real numbers $\eta_j$, divisor $\nu_j$, real-valued bounded functions $k_j\ (1\le j\le q)$ and a constant $c$ as in Lemma \ref{lem3.7}. Then for every $\epsilon >0$ satisfying 
$$\gamma=\sum_{j=1}^q\omega_j (1-\eta_j)-M-1>\epsilon(\sigma_{M+1}+\sum_{j=1}^q\frac{\eta_j}{q}),$$ 
there exists a positive constant $C$, depending only on $\mathcal V, Q_j,\eta_j,\omega_j\ (1\le j\le q)$, such that
\begin{align*}\eta&:=\dfrac{|F_{\mathcal V,0}|^{\gamma-\epsilon(\sigma_{M+1}+\sum_{j=1}^q\frac{\eta_j}{q})}\prod_{j=1}^qh_{j}^{\omega_j+\frac{\epsilon}{q}}|F_{\mathcal V,M}|^{1+\epsilon}\prod_{p=0}^{M-1}|F_{\mathcal V,p}(Q_j)|^{\epsilon/q}}{\prod_{j=1}^q|Q_j(F)|^{\omega_j}}\\
&\le C\left(\dfrac{2R}{R^2-|z|^2}\right)^{\sigma_M+\epsilon\tau_M},
\end{align*}
where $h_j=|k_j|\cdot\|F\|^{d\eta_j}\ (1\le j\le q)$.
\end{lemma}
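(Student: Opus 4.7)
The plan is to derive the Main Lemma from Lemma \ref{lem3.7} by an application of the Generalized Schwarz's Lemma (Lemma \ref{lem3.6}), followed by a cosmetic rewriting of the left-hand side in which the logarithmic factors in the denominator are converted into small powers of the contact functions $\varphi_{\mathcal V,p}(Q_j)$ appearing in the numerator.

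First, Lemma \ref{lem3.7} produces a continuous pseudo-metric $d\tau^2=\eta_0^{2}|dz|^{2}$ on $\Delta(R)$ of strictly negative curvature, where $\eta_0$ is the explicit function denoted $\eta$ in that lemma. In fact, the last line of the proof of Lemma \ref{lem3.7} establishes $\Delta\log\eta_0^{2}\ge C'\eta_0^{2}$ for some $C'>0$, so $v:=\sqrt{C'/2}\,\eta_0$ satisfies $\Delta\log v\ge v^{2}$; Lemma \ref{lem3.6} then gives $v(z)\le 2R/(R^{2}-|z|^{2})$, hence
$$\eta_0(z)\le \frac{C_1\cdot 2R}{R^{2}-|z|^{2}}$$
for a constant $C_1$ depending only on $C'$. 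Raising both sides to the power $\sigma_M+\epsilon\tau_M$ yields
$$\frac{|F_{\mathcal V,0}|^{\gamma-\epsilon(\sigma_{M+1}+\sum_{j}\eta_j/q)}\prod_{j=1}^{q}h_{j}^{\omega_{j}+\epsilon/q}\,|F_{\mathcal V,M}|\prod_{p=0}^{M}|F_{\mathcal V,p}|^{\epsilon}}{\prod_{j=1}^{q}\bigl(|Q_{j}(F)|\prod_{p=0}^{M-1}\log(\delta/\varphi_{\mathcal V,p}(Q_{j}))\bigr)^{\omega_{j}}}\le C_{2}\left(\frac{2R}{R^{2}-|z|^{2}}\right)^{\sigma_M+\epsilon\tau_M}.$$

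To match the statement of the Main Lemma I would next rewrite the numerator. Split $|F_{\mathcal V,M}|\prod_{p=0}^{M}|F_{\mathcal V,p}|^{\epsilon}=|F_{\mathcal V,M}|^{1+\epsilon}\prod_{p=0}^{M-1}|F_{\mathcal V,p}|^{\epsilon}$, and then distribute the remaining powers among the $q$ hypersurfaces via $|F_{\mathcal V,p}|^{\epsilon}=\prod_{j=1}^{q}|F_{\mathcal V,p}|^{\epsilon/q}$. Using $|F_{\mathcal V,p}|^{\epsilon/q}=|F_{\mathcal V,p}(Q_j)|^{\epsilon/q}\varphi_{\mathcal V,p}(Q_j)^{-\epsilon/q}$, one sees that the ratio between the desired left-hand side of the Main Lemma and the left-hand side displayed above is exactly
$$\prod_{j=1}^{q}\prod_{p=0}^{M-1}\varphi_{\mathcal V,p}(Q_j)^{\epsilon/q}\log^{\omega_{j}}\!\bigl(\delta/\varphi_{\mathcal V,p}(Q_j)\bigr).$$

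Finally, since each $\varphi_{\mathcal V,p}(Q_j)$ is bounded above by a constant depending only on $Q_j$ and $\mathcal V$, and since $t^{\epsilon/q}\log^{\omega_{j}}(\delta/t)$ is bounded above on any interval $(0,T]$ (an elementary consequence of the fact that $(\log s)^{a}/s^{b}$ is bounded on $[1,\infty)$ for any $a\ge 0$, $b>0$), this ratio is uniformly bounded by some $C_{3}$ depending only on $\epsilon,\omega_{j},Q_{j},\mathcal V,\delta$. Taking $C:=C_{2}C_{3}$ completes the proof. The substantive work is already contained in Lemma \ref{lem3.7} (the curvature estimate, where Theorems \ref{thm3.3}--\ref{thm3.5} are the essential ingredients); the only new step is the Schwarz--Pick bound plus the elementary comparison of logarithms with small powers, so I anticipate no real obstacle beyond careful bookkeeping of exponents.
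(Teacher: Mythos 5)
Your proposal is correct and follows essentially the same route as the paper: apply the curvature inequality from Lemma \ref{lem3.7}, invoke the generalized Schwarz lemma (Lemma \ref{lem3.6}) to bound $\eta$, raise to the power $\sigma_M+\epsilon\tau_M$, and then absorb the logarithmic factors by writing $|F_{\mathcal V,p}|^{\epsilon}=\prod_{j}\bigl(|F_{\mathcal V,p}(Q_j)|/\varphi_{\mathcal V,p}(Q_j)\bigr)^{\epsilon/q}$ and using the boundedness of $t^{\epsilon/q}\log^{\omega}(\delta/t)$ on $(0,T]$. Your bookkeeping of the ratio between the two left-hand sides matches the paper's computation exactly, so there is nothing to add.
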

\begin{proof}
As in the proof of Lemma \ref{lem3.7}, we have
$$dd^c\log\eta\le C_2\eta^2dd^c|z|^2.$$
According to Lemma \ref{lem3.6}, this implies that
$$ \eta\le C_3\dfrac{2R}{R^2-|z|^2},$$
for some positive constant. Then we have
$$ \left( \dfrac{|F_{\mathcal V,0}|^{\gamma-\epsilon(\sigma_{M+1}+\sum_{j=1}^q\frac{\eta_j}{q})}\prod_{j=1}^qh_{j}^{\omega_j+\frac{\epsilon}{q}}|F_{\mathcal V,M}|\prod_{p=0}^{M}|F_{\mathcal V,p}|^{\epsilon}}{\prod_{j=1}^q(|Q_j(F)|\prod_{p=0}^{M-1}\log (\delta/\varphi_{\mathcal V,p}(Q_j)))^{\omega_j}}\right )^{\frac{1}{\sigma_M+\epsilon\tau_M}}\le C_3\dfrac{2R}{R^2-|z|^2}.$$
It follows that
$$ \left( \dfrac{|F_{\mathcal V,0}|^{\gamma-\epsilon(\sigma_{M+1}+\sum_{j=1}^q\frac{\eta_j}{q})}\prod_{j=1}^qh_{j}^{\omega_j+\frac{\epsilon}{q}}|F_{\mathcal V,M}|^{1+\epsilon}\prod_{p=0}^{M-1}|F_{\mathcal V,p}(Q_j)|^{\epsilon/q}}{\prod_{j=1}^q|Q_j(F)|^{\omega_i}\prod_{j=1}^q\prod_{p=0}^{M-1}(\varphi_{\mathcal V,p}(Q_j))^{\epsilon/2q}(\log (\delta/\varphi_{\mathcal V,p}(Q_j)))^{\omega_j}}\right )^{\frac{1}{\sigma_M+\epsilon\tau_M}}\le C_3\dfrac{2R}{R^2-|z|^2}.$$
Note that the function $x^{\epsilon/q}\log^{\omega}\left (\dfrac{\delta}{x^2}\right)\ (\omega>0,0<x\le 1)$ is bounded. Then we have
$$ \left( \dfrac{|F_{\mathcal V,0}|^{\gamma-\epsilon(\sigma_{M+1}+\sum_{j=1}^q\frac{\eta_j}{q})}\prod_{j=1}^qh_{j}^{\omega_j+\frac{\epsilon}{q}}|F_{\mathcal V,M}|^{1+\epsilon}\prod_{p=0}^{M-1}|F_{\mathcal V,p}(Q_j)|^{\epsilon/q}}{\prod_{j=1}^q|Q_j(F)|^{\omega_i}}\right )^{\frac{1}{\sigma_M+\epsilon\tau_M}}\le C_4\dfrac{2R}{R^2-|z|^2},$$
for a positive constant $C_4$. The lemma is proved.
\end{proof}

\section{Modified defect relation for Gauss map with hypersurfaces}

In this section, we will prove the main theorems of the paper. Firstly, we need some following preparation.

\begin{lemma}[{cf. \cite[Lemma 1.6.7]{Fu93}}]\label{lem4.1}
Let $d\sigma^2$ be a conformal flat metric on an open Riemann surface $S$. Then for every point $p\in S$, there is a holomorphic and locally biholomorphic map $\Phi$ of a disk $\Delta (R_0):=\{w:|w| <R_0\}\ (0 <R_0\le\infty)$ onto an open neighborhood of $p$ with $\Phi(0)=p$ such that $\Phi$ is a local isometry, namely the pull-back $\Phi^*(d\sigma^2)$ is equal to the standard (flat) metric on $\Delta(R_0)$, and for some point $a_0$ with $|a_0|=1$, the curve $\Phi (\overline{0,R_0a_0})$ is divergent in $S$ (i.e. for any compact set $K\subset S$, there exists an $s_0<R_0$ such that $\Phi (\overline{0,s_0a_0})$ does not intersect $K$).
\end{lemma}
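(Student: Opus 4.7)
First, I would build a local isometric chart at $p$ from the flatness of $d\sigma^2$. Writing $d\sigma^2=\lambda^2|dz|^2$ in a small chart, flatness is $dd^c\log\lambda=0$, so $\log\lambda$ is harmonic and locally equals $\re\psi$ for a holomorphic $\psi$; then $h(z)=\int e^{\psi(\zeta)}\,d\zeta$ satisfies $|h'|=\lambda$ and is a holomorphic local isometry from a neighborhood of $p$ onto a disk in $\C$. Inverting $h$ yields an initial isometric chart $\Phi_0:\Delta(r_0)\to S$ with $\Phi_0(0)=p$.

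Next, I would form the maximal extension of this chart. Let $\mathcal{F}$ denote the family of pairs $(r,\Phi_r)$ with $\Phi_r:\Delta(r)\to S$ holomorphic, $\Phi_r(0)=p$, and $\Phi_r^{*}(d\sigma^2)=|dw|^2$. Any two members of $\mathcal{F}$ must agree on the intersection of their domains: once the derivative at $0$ is normalized, the composition $h\circ\Phi_r$ is the identity in a neighborhood of $0$ and, by the identity principle, on all of the common domain. Setting $R_0:=\sup\{r:(r,\Phi_r)\in\mathcal{F}\}\in(0,\infty]$ and taking the union, I obtain a single holomorphic, locally biholomorphic local isometry $\Phi:\Delta(R_0)\to S$ with $\Phi(0)=p$.

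To exhibit a divergent radius, I would argue by contradiction. Suppose $R_0<\infty$ and that for every unit vector $a$ the curve $\Phi(\overline{0,R_0 a})$ remains in some compact $K_a\subset S$. Then it has a subsequential limit $q_a\in S$; by the first step there is an isometric chart around $q_a$ on a disk of some radius $\rho_a>0$, and by the identity principle this chart glues with $\Phi$ to extend $\Phi$ past $R_0 a$ in an open wedge around the direction $a$. Compactness of the unit circle together with the continuity of the conformal factor $\lambda$ (which forces $\rho_a$ to be uniformly bounded below as $a$ varies) then produces an $\epsilon>0$ so that these wedges cover all directions and assemble into a holomorphic extension of $\Phi$ to $\Delta(R_0+\epsilon)$, contradicting the maximality of $R_0$. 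In the remaining case $R_0=\infty$, the image of $\Phi:\C\to S$ has infinite total area and therefore cannot be relatively compact in $S$, so some ray $\Phi(\{ta_0:t\ge 0\})$ necessarily escapes every compact subset of $S$.

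The main obstacle is the gluing step in the maximality argument: one must ensure that the wedge-like extensions obtained at different boundary directions of $\partial\Delta(R_0)$ are mutually consistent and combine into a single-valued holomorphic extension on a full annular neighborhood, rather than a multi-valued or ramified one. This consistency is exactly controlled by the uniqueness of local isometric continuation from a base point with prescribed derivative, which in turn follows from the harmonicity of $\log\lambda$ and the identity principle for holomorphic functions.
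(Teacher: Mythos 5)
The paper gives no proof of this lemma at all --- it is quoted verbatim from Fujimoto's book --- so your argument can only be judged against the standard one, and your overall strategy (local isometric chart from harmonicity of $\log\lambda$, maximal isometric extension $\Phi:\Delta(R_0)\to S$, extension across the boundary circle when no radius is divergent, contradiction with maximality) is exactly that standard argument. The case $R_0<\infty$ is essentially sound, but two details need tightening. First, a subsequential limit of $\Phi(ta)$ as $t\to R_0$ is not enough to extend $\Phi$ past $R_0a$: you need the radius to actually converge, which holds because its image has finite length $R_0$, hence is Cauchy once its remaining length is smaller than the radius of a chart it keeps re-entering. Second, the uniform lower bound on $\rho_a$ does not follow from ``continuity of the conformal factor'' --- $\lambda$ is continuous on a non-compact surface, so continuity alone gives no uniform bound; you would need to know that the limit points $q_a$ range over a compact set (which one can get from the Lipschitz estimate $\mathrm{dist}_S(q_a,q_{a'})\le R_0\,|\arg a-\arg a'|$). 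In fact no uniformity is needed: each point of the compact circle $|w|=R_0$ has \emph{some} open neighborhood to which $\Phi$ extends, finitely many of these cover the circle, and their union with $\Delta(R_0)$ already contains some $\Delta(R_0+\epsilon)$; single-valuedness of the glued extension is then cleanest via the monodromy theorem on the simply connected set $\Delta(R_0+\epsilon)$.

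The genuine gap is the case $R_0=\infty$. The deduction ``the image of $\Phi:\C\to S$ is not relatively compact, so some ray $\Phi(\{ta_0:t\ge0\})$ escapes every compact set'' is a non sequitur: a priori the image could leave every compact set along a spiral while every individual ray keeps returning to a fixed compact set, and non-relative-compactness of a set says nothing about the behaviour of particular curves inside it. The premise is also unjustified: $\Phi$ need not be injective, so $\int_{\C}\Phi^*(dA)=\infty$ does not imply that the image has infinite area, nor that it fails to be relatively compact. The correct way to close this case is to note that $\C$ with the flat metric is complete, so the local isometry $\Phi:\C\to S$ onto the connected surface $S$ is a Riemannian covering map; hence $S$ is a complete flat open surface, i.e.\ isometric to $\C$ or to a flat cylinder $\C/\Z\omega$, and in either case all but at most two directions $a_0$ yield divergent rays. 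Without an argument of this kind the lemma is not established when $R_0=\infty$.
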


Let $x=(x_0,\ldots,x_{n}): S\rightarrow\R^{n+1}$ be the immersion of the minimal surface $S$ into $\R^{n+1}$. Let $(x,y)$ be a local isothermal coordinate of $S$. Then $z=x+iy$ is a local holomorphic coordinate of $S$. The generalized Gauss map $g$ of $x$ is defined (locally) by
$$ g:S\rightarrow\P^{n}(\C), g:=\left(\dfrac{\partial x_0}{\partial z}:\cdots:\dfrac{\partial x_{n}}{\partial z}\right).$$
Then, we denote by $G_z$ the local reduced representation of $g$ defined by
$$G_z=\left(\dfrac{\partial x_0}{\partial z},\ldots,\dfrac{\partial x_{n}}{\partial z}\right).$$
Then, with other local holomorphic coordinate $\xi$, we have 
$$G_\xi=G_z\cdot\left (\dfrac{dz}{d\xi}\right)$$
(this yields that the map $g$ is global well-defined). Also, the metric $ds^2$ on $S$ induced by the canonical metric on $\R^{n+1}$ satisfies
$$ ds^2=2\|G_z\|^2|dz|^2.$$
We note that, $g$ is holomorphic since $S$ is minimal.

Let $V$ be a $k$-dimension projective subvariety of $\P^n(\C)$ and $d$ is positive integer. We fix a $\C$-ordered basis $\mathcal V=([v_0],\ldots,[v_M])$ of $I_d(V)$ as in the Section 3. Assume that the image of $g$ is included in $V$ and $g$ is nondegenerate over $I_d(V)$.

Let $Q_1,\ldots,Q_q$ be $q$ hypersurfaces of the same degree $d$ of $\P^n(\C)$ in $N$-subgeneral position with respect to $V$ and $V\not\subset Q_i\ (1\le i\le q)$. Suppose that
$$ [Q_j]=\sum_{i=0}^Ma_{ji}[v_i],$$
where $\sum_{i=0}^M|a_{ji}|^2=1$.

Since $g$ is nondegenerate over $I_d(V)$, the contact function satisfying
$$ ((G_z)_{\mathcal V,p})_z(Q_j)\not\equiv 0, \forall 1\le j\le q,0\le p\le M,$$
here the first subscript $z$ mean $G_z$ is the local reduced representation of $g$ on a local holomorphic chat $(U,z)$, the second subscript $z$ means the derivatives are taken w.r.t. $z$.
Then, for each $j,p\ (1\le j\le q,0\le p\le M)$, we may choose $i_1,\ldots,i_p$ with $0\le i_1<\cdots<i_p\le M$ such that
$$ (\psi(G_z)_{jp})_z=\sum_{l\ne i_1,\ldots,i_p}a_{jl}W_z(v_l(G_z),v_{i_1}(G_z),\ldots,v_{i_p}(G_z))\not\equiv 0.$$
We also note that $(\psi(G_z)_{j0})_z=(G_z)_{\mathcal V,0}(Q_j)=Q_j(G_z)$ and $(\psi(G_z)_{jM})_z=((G_z)_{\mathcal V,M})_z$.

\begin{proof}[Proof of Theorem \ref{1.1}]
By replacing $Q_i$ with $Q_i^{d/\deg Q_i}\ (1\le i\le q)$ if necessary, we may assume that all $Q_i\ (1\le i\le q)$ are of the same degree $d$. 

From the assumption of the theorem, we have
$$\sum_{j=1}^qD^{M}_{g}(Q_j)>\dfrac{(2N-k+1)(M+1)}{k+1}+\dfrac{(2N-k+1)M(M+1)}{2d(k+1)}.$$
Then, there exist a compact subset $K\subset S$, numbers $\eta_j>0\ (1\le j\le q)$ such that
$$ \sum_{j=1}^q(1-\eta_j)> \dfrac{(2N-k+1)(M+1)(M+2d)}{2d(k+1)},$$
divisor $\nu_j\ (1\le j\le q)$ and bounded continuous functions $k_j$ (we may assume $0\le k_j<1$) with mild singularities on $S\setminus K$ such that $\nu_j\ge c'$ on $\supp\nu_j$ for a positive constant $c'$ and
$$ [\min\{\nu_{Q_j(f)},M\}]+[v_j]=d\eta_j\Omega_f +dd^c[\log k_j^2]$$
in the sense of currents. We set $h_{zj}=k_j\|G_z\|^{d\eta_j}$ for each local reduced representation $G_z$ of $g$. Then we have
$$ \nu_{h_{zj}}\ge c\text{ on }\supp\nu_{h_{zj}}, \text{ where }c=\min\{1,c'\}. $$

From Theorem \ref{lem2.3}, we have
$$ (q-2N+k-1)\tilde\omega=\sum_{j=1}^q\omega_j-k-1;\ \tilde\omega\ge\omega_j>0 \text{ and }\tilde\omega\ge\dfrac{k+1}{2N-k+1}.$$
Therefore,
\begin{align}\label{new2}
\begin{split}
\sum_{j=1}^q\omega_j(1-\eta_j)-M-1&\ge\tilde\omega(q-2N+k-1-\sum_{j=1}^q\eta_j)-M+k\\ 
&\ge\dfrac{k+1}{2N-k+1}\left (\sum_{j=1}^q(1-\eta_j)-2N+k-1\right)-M+k\\
&= \dfrac{k+1}{2N-k+1}\left (\sum_{j=1}^q(1-\eta_j)-\dfrac{(2N-k+1)(M+1)}{k+1}\right)\\
&>\dfrac{k+1}{2N-k+1}\cdot\dfrac{(2N-k+1)M(M+1)}{2d(k+1)}=\dfrac{\sigma_M}{d}.
\end{split}
\end{align}
Then, we can choose a rational number $\epsilon\ (>0)$ such that
$$ \dfrac{d(\sum_{j=1}^q\omega_j(1-\eta_j)-M-1)-\sigma_M}{d\sigma_{M+1}+\tau_M+d\sum_{j=1}^q\frac{\eta_j}{q}}>\epsilon> \dfrac{d(\sum_{j=1}^q\omega_j(1-\eta_j)-M-1)-\sigma_M}{\frac{c}{q}+d\sigma_{M+1}+\tau_M+d\sum_{j=1}^q\frac{\eta_j}{q}}.$$
We define the following numbers
\begin{align*}
h&:=d(\sum_{j=1}^q\omega_j(1-\eta_j)-M-1)-\epsilon\left (d\sigma_{M+1}+d\sum_{j=1}^q\frac{\eta_j}{q}\right)>\sigma_M+\epsilon\tau_{M},\\ 
\rho&:=\dfrac{1}{h}(\sigma_M+\epsilon\tau_M),\\
\rho^*&:=\dfrac{1}{(1-\rho)h}=\dfrac{1}{d(\sum_{j=1}^q\omega_j(1-\eta_j)-M-1)-\sigma_M-\epsilon(d\sigma_{M+1}+\tau_M+d\sum_{j=1}^q\frac{\eta_j}{q})}. 
\end{align*}
It is clear that $0<\rho<1$ and $\frac{c\epsilon\rho^*}{q}>1.$
We consider a set
$$ A_1=\{a\in A; (\psi (G_z)_{jp})_z(a)\ne 0,h_{zj}(a)\ne 0\ \forall j=1,\ldots,q; p=0,\ldots,M\},$$
where $A=S\setminus K$, $z$ is a local holomorphic coordinate around $a$, and define a new pseudo-metric on $A_1$ as follows
$$ d\tau^2=\left (\dfrac{\prod_{j=1}^q\|G_z(Q_j)\|^{\omega_j}}{|((G_z)_{\mathcal V,M})_z)|^{1+\epsilon}\prod_{j=1}^qh_{zj}^{\omega_j+\frac{\epsilon}{q}}\prod_{j,p}|(\psi(G_z)_{jp})_z|^{\frac{\epsilon}{q}}}\right)^{2\rho^*}|dz|^2,$$
where $h_{zj}=k_j\|F_z\|^{d\eta_j}$. Here and throughout this section, for simplicity we will write $\prod_{j,p}$ for $\prod_{j=1}^q\prod_{p=1}^{M-1}$.
We note that the definition of $A_1$ and $d\tau^2$ do not depend on the choice of the local holomoprhic coordinate.
 Indeed, we have
\begin{align*}
&G_z=(\xi'_z)\cdot G_\xi,\ Q_j(G_z)=(\xi'_z)^d\cdot Q_j(G_\xi),\ v_j(G_z)=(\xi'_z)^d\cdot v_j(G_\xi),\ h_{zj}&=|\xi'_z|^{d\eta_j}\cdot h_{\xi j}\\
&((G_z)_{\mathcal V,p})_z=(\xi'_z)^{d(p+1)}\cdot((G_\xi)_{\mathcal V,p})_z=(\xi'_z)^{d(p+1)+\sigma_p}\cdot((G_\xi)_{\mathcal V,p})_\xi\\
&(\psi (G_z)_{ip})_z=(\xi'_z)^{d(p+1)+\sigma_p}\cdot(\psi(G_z)_{ip})_\xi\ (0\le p\le M).\\
\end{align*}
and hence $(\psi (G_z)_{ip})_z(a)=0$ if and only if $(\psi (G_\xi)_{ip})_\xi(a)=0$ for two local holomorphic coordinates $z$ and $\xi$ around $a$. On the other hand, we have

\begin{align*}
&\left (\dfrac{\prod_{j=1}^q|Q_j(G_z)|^{\omega_j}}{|((G_z)_{\mathcal V,M})_z)|^{1+\epsilon}\prod_{j=1}^qh_{zj}^{\omega_j+\frac{\epsilon}{q}}\prod_{j,p}|(\psi(G_z)_{jp})_z|^{\frac{\epsilon}{q}}}\right)^{2\rho^*}|dz|^2\\ 
=& \left (\dfrac{\left(\prod_{j=1}^q|Q_j(G_\xi)|^{\omega_j}\right)|\xi'_z|^{d\sum_{j=1}^q\omega_j(1-\eta_j)-d\epsilon\sum_{j=1}^q\frac{\eta_j}{q}}}{|((G_\xi)_{\mathcal V,M})_\xi)|^{1+\epsilon}\prod_{j=1}^qh_{\xi j}^{\omega_j+\frac{\epsilon}{q}}\prod_{j,p}|(\psi(G_\xi)_{jp})_\xi|^{\frac{\epsilon}{q}}|\xi'_z|^{d(M+1)+\sigma_M+\epsilon\sum_{p=0}^{M}(d(p+1)+\sigma_p)}}\right)^{2\rho^*}|dz|^2\\
=& \left (\dfrac{\left(\prod_{j=1}^q|Q_j(G_\xi)|^{\omega_j}\right)|\xi'_z|^{d\sum_{j=1}^q\omega_j(1-\eta_j)-d\epsilon\sum_{j=1}^q\frac{\eta_j}{q}}}{\left(|((G_\xi)_{\mathcal V,M})_\xi)|^{1+\epsilon}\prod_{j=1}^qh_{\xi j}^{\omega_j+\frac{\epsilon}{q}}\prod_{j,p}|(\psi(G_\xi)_{jp})_\xi|^{\frac{\epsilon}{q}}\right)|\xi'_z|^{d(M+1)+\sigma_M+\epsilon(d\sigma_{M+1}+\tau_{M})}}\right)^{2\rho^*}|dz|^2\\
=& \left (\dfrac{\prod_{j=1}^q|Q_j(G_\xi)|^{\omega_j}}{|((G_\xi)_{\mathcal V,M})_\xi)|^{1+\epsilon}\prod_{j=1}^qh_{\xi j}^{\omega_j+\frac{\epsilon}{q}}\prod_{j,p}|(\psi(G_\xi)_{jp})_\xi|^{\frac{\epsilon}{q}}}\right)^{2\rho^*}|d\xi|^2
\end{align*}
(since $|\xi'_z|^{2\rho^*(d(\sum_{j=1}^q\omega_j(1-\eta_j)-M-1)-\sigma_M-\epsilon(d\sigma_{M+1}+\tau_M+d\sum_{j=1}^q\frac{\eta_j}{q}))}|dz^2|=|\xi'_z|^2|dz^2|=|d\xi^2|$).
Then the pseudo-metric $d\tau^2$ is global well-defined.


Since $Q_j(G_z),((G_z)_{\mathcal V,M})_z,\psi(G_z)_{jp})_z$ are all holomorphic functions and $u_j\ (1\le j\le q)$ are harmonic functions on $A_1$, $d\tau$ is flat on $A_1$. So, $d\tau$ can be smoothly extended over $K$. Thus, we have a metric, still call it $d\tau$, on
$$A_1' =A_1\cup K$$
that is flat outside the compact set $K$.

\begin{claim}\label{cl4.2}
$d\tau$ is complete on $A_1'$
\end{claim}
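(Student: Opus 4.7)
The plan is a proof by contradiction in the Fujimoto--Mo style, combining the Main Lemma with the flat-metric uniformization of Lemma~\ref{lem4.1}. Suppose $d\tau$ is not complete on $A_1'$. Then there is a divergent curve $\gamma:[0,1)\to A_1'$ of finite $d\tau$-length. Since $K$ is compact and $d\tau$ is smoothly extended across $K$, a tail of $\gamma$ lies in $A$; one first rules out accumulation at any point $z_0$ of the discrete set $A \setminus A_1$, at which some factor in the denominator of $d\tau$ vanishes. For a zero of $h_{zj}$, the hypothesis $\nu_{h_{zj}} \ge c$ on its support combined with the engineered inequality $c\epsilon\rho^*/q > 1$ forces $d\tau$ to have a non-integrable singularity at $z_0$ (any curve approaching has infinite $d\tau$-length), and an analogous order-of-vanishing check handles the zeros of $(\psi(G_z)_{jp})_z$. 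Hence we may assume $\gamma \subset A_1$ and $\gamma$ divergent in $S$.

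Since $d\tau$ is flat on $A_1$, Lemma~\ref{lem4.1} applied at a suitable point of $\gamma$ provides a holomorphic local isometry $\Phi:\Delta(R_0) \to A_1$ with $R_0 < \infty$ (coming from the finite $d\tau$-length of $\gamma$) and $\Phi(\overline{0, R_0 a_0})$ divergent in $A_1$, hence in $S$. Apply the Main Lemma to $\tilde g := g \circ \Phi$ with reduced representation $\tilde F := (G_z \circ \Phi)\,\Phi'$; the hypotheses transfer via pullback ($\tilde k_j := k_j \circ \Phi$, $\tilde \nu_j := \Phi^*\nu_j$, $\tilde h_j := \tilde k_j \|\tilde F\|^{d\eta_j}$) and give $\tilde \eta \le C(2R_0/(R_0^2-|w|^2))^{\sigma_M+\epsilon\tau_M}$. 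Combining this with the isometry identity $\Phi^*(d\tau^2) = |dw|^2$, namely
$$\prod_j |Q_j(\tilde F)|^{\omega_j} = |\tilde F_{\mathcal V,M}|^{1+\epsilon} \prod_j \tilde h_j^{\omega_j + \epsilon/q} \prod_{j,\,p=1}^{M-1} |\psi(\tilde F)_{jp}|^{\epsilon/q},$$
cancels the common factors in $\tilde\eta$ and, using the pointwise bound $|\tilde F_{\mathcal V,p}(Q_j)| \ge |\psi(\tilde F)_{jp}|$, reduces the inequality to
$$|\tilde F_{\mathcal V,0}|^{\gamma - \epsilon(\sigma_{M+1} + \sum_j \eta_j/q)} \prod_j |Q_j(\tilde F)|^{\epsilon/q} \le C' \left(\frac{2R_0}{R_0^2 - |w|^2}\right)^{\sigma_M + \epsilon\tau_M}.$$

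Compactness of $V$, together with the fact that $\{[v_i]\}$ is a $\C$-basis of $I_d(V)$ without common zeros on $V$, gives $|\tilde F_{\mathcal V,0}| \asymp \|\tilde F\|^d$ uniformly; the residual $\prod_j |Q_j(\tilde F)|^{\epsilon/q}$ factor is absorbed via the subgeneral-position bound of Lemma~\ref{lem2.5} together with the Nochka-weight trick of Lemma~\ref{lem2.3}(v), yielding
$$\|\tilde F(w)\| \le C'' \left(\frac{2R_0}{R_0^2 - |w|^2}\right)^{(\sigma_M + \epsilon\tau_M)/h}, \qquad h := d\gamma - \epsilon d\bigl(\sigma_{M+1} + \sum_j \eta_j/q\bigr).$$
The upper-bound condition on $\epsilon$ from the setup gives $(\sigma_M + \epsilon\tau_M)/h < 1$, so $\int_0^{R_0}\|\tilde F(ra_0)\|\,dr < \infty$. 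The divergent curve $\Phi(\overline{0, R_0 a_0})$ thus has finite $ds$-length, contradicting the completeness of $S$ in the metric $ds^2 = 2\|G_z\|^2 |dz|^2$. The principal technical obstacle is this last substitution and absorption step: carrying out the cancellation via the isometry identity, handling the residual $\prod_j |Q_j(\tilde F)|^{\epsilon/q}$ factor with the subgeneral-position inequalities, and invoking the two-sided calibration of $\epsilon$ --- simultaneously $(\sigma_M + \epsilon\tau_M)/h < 1$ for integrability along the radius and $c\epsilon\rho^*/q > 1$ for the bad-point reduction.
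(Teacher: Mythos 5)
Your proposal is correct and follows essentially the same route as the paper's own proof: contradiction via a finite-length divergent curve, exclusion of accumulation at zeros of the denominator using $\nu_{h_{zj}}\ge c$ and $c\epsilon\rho^*/q>1$, flat-metric uniformization via Lemma \ref{lem4.1}, and the Main Lemma estimate giving exponent $\rho=(\sigma_M+\epsilon\tau_M)/h<1$ so that the divergent curve has finite $ds$-length. The only point treated more briefly than in the paper is the justification that the maximal isometric extension's boundary path diverges in $S$ itself (rather than approaching $K$ or the deleted zero set), which the paper handles with the distance-$d/3$ argument; your ingredients suffice for this.
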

Indeed, suppose contrarily that $A_1'$ is not complete with $d\tau$, there is a divergent curve $\gamma: [0,1)\rightarrow A_1'$ with finite length. By using only the last segment of $\gamma$, we may assume that the distance $d(\gamma,K)$ between $\gamma$ and $K$ exceeds a positive number $d$. Then, as $t\rightarrow 1$ there are only two cases: either $\gamma(t)$ tends to a point $a$ with
$$ \prod_{j=1}^q(h_{zj}\prod_{p=0}^M|\psi(G_z)_{ip}|)(a)=0$$
for a local holomorphic coordinate $z$ around $a$, or else $\gamma (t)$ tends to the boundary of $S$.

For the first case, by Theorem \ref{thm3.5}, we have
\begin{align*}
\nu_{d\tau}(a)&\le -\biggl (\nu_{((G_z)_{\mathcal V,M})_z}(a)-\sum_{j=1}^q\omega_j\nu_{G_z(Q_j)}(a)+\sum_{j=1}^q\omega_j\min\{\nu_{G_z(Q_j)}(a),M\}\\ 
&\ \ +\bigl(\epsilon\nu_{((G_z)_{\mathcal V,M})_z(a)}+\dfrac{\epsilon}{q}\sum_{j=1}^q\big(\nu_{h_{zj}}+\sum_{p=0}^{M-1}\nu_{\psi(G_z)_{jp}}(a)\big)\bigl)\biggl)\rho^*\\
&\le -\epsilon\rho^*\nu_{((G_z)_{\mathcal V,M})_z}(a)-\dfrac{\epsilon\rho^*}{q}\sum_{j=1}^q\big(\nu_{h_{zj}}+\sum_{p=0}^{M-1}\nu_{\psi(G_z)_{jp}}(a)\big)\le -\dfrac{c\epsilon\rho^*}{q}
\end{align*}
(since $\nu_{h_{zj}}>c$ on $\supp\nu_j$). Then, there is a positive constant $C$ such that
$$ |d\tau|\ge\dfrac{C}{|z-z(a)|^{\frac{\epsilon\rho^*}{q}}}|dz|$$
in a neighborhood of $a$. Then we get
$$ L_{d\tau}(\gamma)=\int_0^1\|\gamma'(t)\|_{\tau}dt=\int_{\gamma}d\tau \ge\int_\gamma \dfrac{C}{|z-z(a)|^{\frac{c\epsilon\rho^*}{q}}}|dz|=+\infty$$
($\gamma (t)$ tends to $a$ as $t\rightarrow 1$). This is a contradiction. Then, the second case must occur, that is $\gamma (t)$ tends to the boundary of $S$ as $t\rightarrow 1$.  

Choose $t_0$ close $1$ enough such that $L_{d\tau}(\rho|_{(t_0,1)})<d/3$. Take a disk $\Delta$ (in the metric induced by $d\tau$) around $\gamma(t_0)$. Since $d\tau$ is flat, by Lemma \ref{lem4.1}, $\Delta$ is isometric to an ordinary disk in the plane. Let $\Phi:\Delta(r)=\{|\omega|<r\}\rightarrow\Delta$ be this isometric with $\Phi(0)=\gamma(t_0)$. Extend $\Phi$ as a local isometric into $A_1$ to a the largest disk possible $\Delta(R)=\{|\omega|<R\}$, and denoted again by $\Phi$ this extension (for simplicity, we may consider $\Phi$ as the exponential map).  Since $\int_{\gamma|_{[t_0,1)}}d\tau<d/3$, we have $R\le d/3$. Hence, the image of $\Phi$ is bounded away from $K$ by distance at least $2d/3$. Since $\Phi$ cannot be extended to a larger disk, it must be hold that the image of $\Phi$ goes to the boundary $A_1$. But, this image cannot go to points $z_0$ of $A'_1$ with $\prod_{j=1}^qh_{zj}(z_0)\prod_{p=0}^M|\psi(G_z)_{ip}|(z_0)=0$, since we have already shown that $\gamma(t_0)$ is infinitely far away in the metric with respect to these points. Then the image of $\Phi$ must go to the boundary $S$. Hence, by again Lemma \ref{lem4.1}, there exists a point $w_0$ with $|w_0|= R$ so that $\Gamma=\Phi(\overline{0,w_0})$ is a divergent curve on $S$.

Our goal now is to show that $\Gamma$ has finite length in the original metric $ds^2$ on $S$, contradicting the completeness of $S$. Let $f:=g\circ\Phi :\Delta(R)\rightarrow V\subset\P^n(\C)$ be a holomorphic curve which is nondegenerate over $I_d(V)$. Let $z$ be a local holomorphic coordinate on the image of $\Phi$, then $f$ have a local reduced representation
$$ F=(f_0,\ldots,f_n),$$
where $f_i=G_z\circ\Phi\ (0\le i\le n).$
Hence, we have locally:
\begin{align*}
\Phi^*ds^2&=2\|G_z\circ\Phi\|^2|\Phi^*dz|^2=2\|F\|^2\left|\dfrac{d(z\circ\Phi)}{dw}\right|^2|dw|^2,\\
(F_{\mathcal V,M})_w&=((G_z\circ\Phi)_{\mathcal V,M})_w=((G_z)_{\mathcal V,M})_z\circ\Phi\cdot\left(\dfrac{d(z\circ\Phi)}{dw}\right)^{\sigma_M},\\
(\psi(F)_{jp})_w&=(\psi(G_z\circ\Phi)_{jp})_w=(\psi(G_z)_{jp})_z\cdot\left(\dfrac{d(z\circ\Phi)}{dw}\right)^{\sigma_p}, (0\le p\le M).
\end{align*}
On the other hand, since $\Phi$ is locally isometric,
\begin{align*}
&|dw|=|\Phi^*d\tau|\\
&=\left (\dfrac{\prod_{j=1}^q|Q_j(G_z)\circ\Phi|^{\omega_j}}{|((G_z)_{\mathcal V,M})_z\circ\Phi)|^{1+\epsilon}\prod_{j=1}^q(h_{zj}\circ\Phi)^{\omega_j+\frac{\epsilon}{q}}\prod_{j,p}|(\psi(G_z)_{jp})_z\circ\Phi|^{\epsilon/q}}\right)^{\rho^*}\left|\dfrac{d(z\circ\Phi)}{dw}\right|\cdot|dw|\\
&=\left (\dfrac{\prod_{j=1}^q|Q_j(F)|^{\omega_j}}{|(F_{\mathcal V,M})_w|^{1+\epsilon}\prod_{j=1}^q(h_{zj}\circ\Phi)^{\omega_j+\frac{\epsilon}{q}}\prod_{j,p}|(\psi(F)_{jp})_w|^{\epsilon/q}}\right)^{\rho^*}\left|\dfrac{d(z\circ\Phi)}{dw}\right|^{1+h\rho\rho^*}\cdot|dw|
\end{align*}
(because $1+\rho^*(\sigma_M+\epsilon\tau_M)=1+h\rho\rho^*$).
This implies that
\begin{align*}
\left|\dfrac{d(z\circ\Phi)}{dw}\right|&=\left (\dfrac{|(F_{\mathcal V,M})_w|^{1+\epsilon}\prod_{j=1}^q(h_{zj}\circ\Phi)^{\omega_j+\frac{\epsilon}{q}}\prod_{j,p}|(\psi(F)_{jp})_w|^{\epsilon/q}}{\prod_{j=1}^q|Q_j(F)|^{\omega_j}}\right)^{\frac{\rho^*}{1+h\rho\rho^*}}\\
&\le \left (\dfrac{|(F_{\mathcal V,M})_w|^{1+\epsilon}\prod_{j=1}^q(h_{zj}\circ\Phi)^{\omega_j+\frac{\epsilon}{q}}\prod_{j,p}|(F_{\mathcal V,p})_w(Q_j)|^{\epsilon/q}}{\prod_{j=1}^q|Q_j(F)|^{\omega_j}}\right)^{\frac{\rho^*}{1+h\rho\rho^*}}\\
&=\left (\dfrac{|(F_{\mathcal V,M})_w|^{1+\epsilon}\prod_{j=1}^q(h_{zj}\circ\Phi)^{\omega_j+\frac{\epsilon}{q}}\prod_{j,p}|(F_{\mathcal V,p})_w(Q_j)|^{\epsilon/q}}{\prod_{j=1}^q|Q_j(F)|^{\omega_j}}\right)^{1/h}.
\end{align*}
Hence, we have
\begin{align*}
\Phi^*ds&\le\sqrt{2}\|F\|\left (\dfrac{|(F_{\mathcal V,M})_w|^{1+\epsilon}\prod_{j=1}^q(h_{zj}\circ\Phi)^{\omega_j+\frac{\epsilon}{q}}\prod_{j,p}|(F_{\mathcal V,p})_w(Q_j)|^{\epsilon/q}}{\prod_{j=1}^q|Q_j(F)|^{\omega_j}}\right)^{1/h}|dw|\\
&=\sqrt{2}\left (\dfrac{|F_{\mathcal V,0}|^{\frac{h}{d}}(F_{\mathcal V,M})_w|^{1+\epsilon}\prod_{j=1}^q(h_{zj}\circ\Phi)^{\omega_j+\frac{\epsilon}{q}}\prod_{j,p}|(F_{\mathcal V,p})_w(Q_j)|^{\epsilon/q}}{\prod_{j=1}^q|Q_j(F)|^{\omega_j}}\right)^{1/h}|dw|.
\end{align*}
We note that $\frac{h}{d}=\sum_{j=1}^q\omega_j(1-\eta_j)-M-1-\epsilon(\sigma_{M+1}+\sum_{j=1}^q\frac{\eta_j}{q})$. Then the inequality (\ref{new2}) yields that the conditions of Lemma \ref{ML} are satisfied. Then, by applying Lemma \ref{ML} we have
$$ \Phi^*ds\le C\left (\dfrac{2R}{R^2-|w|^2}\right)^\rho|dw|,$$
for some positive constant $C$. Also, we have that $0<\rho<1$. Then
$$L_{ds^2}(\Gamma)\le\int_{\Gamma}ds=\int_{\overline{0,w_0}}\Phi^*ds\le C\cdot\int_{0}^R\left(\dfrac{2R}{R^2-|w|^2}\right)^{\rho}|dw|<+\infty. $$
This contradicts the assumption of completeness of $S$ with respect to $ds^2$. Thus, Claim \ref{cl4.2} is proved.

Then, we note that the metric $d\tau^2$ on $A_1'$ is flat outside $K$, and $A_1'$ is complete with this metric by Claim \ref{cl4.2}. Also, as we known, a theorem of Huber (cf. \cite[Theorem 13, p.61]{Hu61}) yields that if $A_1'$ has finite total curvature then $A_1'$ is finitely connected. This means that there is a compact subregion of $A_1'$ whose complement is the union of a finite number of doubly-connected regions. Therefore,  the functions $\prod_{p=0}^M\prod_{j=1}^q|\psi(G_z)_{jp}|$ must have only a finite number of zeros, and the original surface $S$ is finitely connected. Due to Osserman (cf. \cite[Theorem 2.1]{O63}), each annular ends of $A_1'$, and hence of $S$, is conformally equivalent to a punctured disk. Thus, the Riemann surface $S$ must be conformally equivalent to a compact Riemann surface $\overline{S}$ with a finite number of points removed. By the assumption, in a neighborhood of each of those points the Gauss map $g$ satisfies
$$ \sum_{j=1}^q\delta_{g}^{H}(Q_j)> \dfrac{(2N-k+1)(M+1)}{k+1}.$$
Now by Lemma \ref{cor2.9}, the Gauss map $g$ is not essential at those points. Therefore $g$ can be extended to a holomorphic map from $\overline{S}$ to $V\subset\P^n(\C)$. If the homology class represented by the image of $g:S\rightarrow V\subset\P^n(\C)$ is $l$ times the fundamental homology class of $\P^n(\C)$, then we have
$$\iint K(s)dS=-2\pi l $$
as the total curvature of $S$, where $K(s)$ is the Gaussian curvature of $S$. Hence, Theorem \ref{1.1} is proved.
\end{proof}

\begin{proof}[Proof of Theorem \ref{1.3}]
By Replacing $Q_i$ by $Q_i^{d/\deg Q_j}$ if necessary, we may assume that $\deg Q_j=d\ (1\le j\le q)$. We fix an ordered basis $\mathcal V$ for $I_d(V)$. We take a global reduced representation $G=(g_0,\ldots,g_n)$ of $g$. Suppose contrarily that
$$\sum_{j=1}^q\delta^{S,M}_{g,S}(Q_j)> \dfrac{(2N-k+1)(M+1)}{k+1}+\dfrac{(2N-k+1)M(M+1)}{2d(k+1)}.$$
Then for each $j\ (1\le j\le q)$, there is a constant $\eta_j$ and a continuous subharmonic functions $u_j$ on $S$ satisfying conditions $(D_1),(D_2)$ with respect to the number $\eta_j$, the reduced representation $G$ and the hypersurface $Q_j$ such that
$$\sum_{j=1}^q(1-\eta_j)> \dfrac{(2N-k+1)(M+1)}{k+1}+\dfrac{(2N-k+1)M(M+1)}{2d(k+1)}.$$

Now, for a (local) holomorphic reduced representation $G_z$ of $g$ on a holomorphic chat $(U,z)$, there is a non-vanishing holomorphic function $\theta_z$ such that $G_z=\theta_z\cdot G$. We define 
$$ h_{zj}=|\theta_z|^{d\eta_j}e^{u_j}\le |\theta_z|^{d\eta_j}\cdot\|G\|^{d\eta_j}=\|G_z\|^{d\eta_j}.$$
It easily see that if $\xi$ is another local holomorphic coordinate then $\theta_z=\theta_\xi\cdot\xi'_z$, and hence $h_{zj}=|\xi'_z|^{d\eta_j}h_{\xi j}$. 

From (\ref{new2}), we have
$$\gamma:= d(\sum_{j=1}^q\omega_j(1-\eta_j)-M-1)>\sigma_M.$$
Then, we may choose a rational number $\epsilon\ (>0)$ such that
$$\gamma-\sigma_M=\epsilon (\sigma_{M+1}+\tau_M).$$
We set
$$ \lambda_z=\left( \dfrac{|(G_z)_{\mathcal V,0}|^{\gamma-\epsilon\sigma_{M+1}}\prod_{j=1}^qh_{zj}^{\omega_j}|(G_z)_{\mathcal V,M}|\prod_{p=0}^{M}|(G_z)_{\mathcal V,p}|^{\epsilon}}{\prod_{j=1}^q(|Q_j(G_z)|\prod_{p=0}^{M-1}\log (\delta/\varphi_{z,p}(Q_j)))^{\omega_j}}\right )^{\frac{1}{\sigma_M+\epsilon\tau_M}},$$
where $\varphi_{z,p}(Q_j)=\dfrac{|((G_z)_{\mathcal V,p})_z(Q)|}{|((G_z)_{\mathcal V,p})_z|}$ and $\delta$ is the number satisfying the conclusion of Theorem \ref{thm3.3} with $G_z,\varphi_{z,p}(Q_j)$ in place of $F,\varphi_{\mathcal V,p}(Q_j)$.

We define a pseudo-metric $d\tau^2:=\lambda_z^2|dz|^2$ on a holomorphic chat $(U,z)$ of $S$. We will show that $d\tau^2$ is well-defined, i.e., not depend the choice of the local coordinate. Indeed, if we have another local coordinate $\xi$, we have
\begin{align*}
\lambda_{z}&=\left( \dfrac{|(G_z)_{\mathcal V,0}|^{\gamma-\epsilon\sigma_{M+1}}\prod_{j=1}^qh_{zj}^{\omega_j}|(G_z)_{\mathcal V,M}|\prod_{p=0}^{M}|(G_z)_{\mathcal V,p}|^{\epsilon}}{\prod_{j=1}^q(|Q_j(G_z)|\prod_{p=0}^{M-1}\log (\delta/\varphi_{z,p}(Q_j)))^{\omega_j}}\right )^{\frac{1}{\sigma_M+\epsilon\tau_M}}\\
&=\left( \dfrac{|(G_\xi)_{\mathcal V,0}|^{\gamma-\epsilon\sigma_{M+1}}\prod_{j=1}^qh_{\xi j}^{\omega_j}|(G_\xi)_{\mathcal V,M}| |\xi'_z|^{d\sigma_M}\prod_{p=0}^{M}|(G_\xi)_{\mathcal V,p}|^{\epsilon}|\xi'_z|^{d\epsilon\sum_{p=0}^M\sigma_p)}}{\prod_{j=1}^q(|Q_j(G_\xi)|\prod_{p=0}^{M-1}\log (\delta/\varphi_{\xi,p}(Q_j)))^{\omega_j}}\right )^{\frac{1}{\sigma_M+\epsilon\tau_M}}\\
&=\left( \dfrac{|(G_\xi)_{\mathcal V,0}|^{\gamma-\epsilon\sigma_{M+1}}\prod_{j=1}^qh_{\xi j}^{\omega_j}|(G_\xi)_{\mathcal V,M}|\prod_{p=0}^{M}|(G_\xi)_{\mathcal V,p}|^{\epsilon}}{\prod_{j=1}^q(|Q_j(G_\xi)|\prod_{p=0}^{M-1}\log (\delta/\varphi_{\xi,p}(Q_j)))^{\omega_j}}\right )^{\frac{1}{\sigma_M+\epsilon\tau_M}}|\xi'_z|.
\end{align*}
Hence $\lambda_z^2|dz^2|=\lambda_\xi^2|\xi'_z|^2|dz^2|=\lambda_\xi^2|d\xi^2|$. Then, $d\tau^2$ is well-defined.

It is obvious that $d\tau^2$ is continuous on $S\setminus\bigcup_{j=1}^q g^{-1}(Q_j)$. Take a point $a$ such that $\prod_{j=1}^qQ_j(G_z)=0$ (for a loal holomorphic coordinate $z$ around $a$). We have
$$ \lim_{t\rightarrow a}(h_{zj}(t)\cdot |z(t)-z(a)|^{-\min\{M,\nu_{Q_j(G_z)}(a)\}})<\infty.$$
Combining with Theorem \ref{thm3.5}, this implies that
\begin{align*}
d\tau_z(a)&\ge\dfrac{1}{\sigma_M+\epsilon\tau_M}\biggl (\nu_{(G_z)_{\mathcal V,M}}+\sum_{j=1}^q\omega_j(\min\{M,\nu_{Q_j(G_z)}(a)\}-\nu_{Q_j(G_z)}(a))\biggl)\\ 
& \ge 0.
\end{align*}
Therefore $d\tau$ is continuous at $a$. This yields that $d\tau$ is a continuous pseudo-metric on $S$.

We now prove that $d\tau^2$ has strictly negative curvature on $S$. By Theorem \ref{thm3.5}, we see that
\begin{align*}
dd^c\left[\log\dfrac{|(G_z)_{\mathcal V,M}|}{\prod_{j=1}^q|Q_j(G_z)|^{\omega_j}}\right]&+\sum_{j=1}^q\omega_jdd^c[\log h_{zj}]\\
&\ge \sum_{i=1}^q(-[\min\{M,\nu_{Q_j(f)}\}]+dd^c[\log h_{zj}])\ge 0.
\end{align*}
Then by Theorems \ref{thm3.3} and \ref{thm3.4}, similarly as (\ref{new3}) we have
\begin{align*}
dd^c\log\lambda_z&\ge\dfrac{\gamma-\epsilon\sigma_{M+1}}{\sigma_M+\epsilon\tau_M}d\Omega_g+\dfrac{\epsilon}{2(\sigma_M+\epsilon\tau_M)}dd^c\log\left(|(G_z)_{\mathcal V,0}|\cdots|(G_z)_{\mathcal V,M}|\right)\\
& +\dfrac{1}{2(\sigma_M+\epsilon\tau_M)}dd^c\log\dfrac{\prod_{p=0}^{M-1}|(G_z)_{\mathcal V,p}|^{2\epsilon}}{\prod_{p=0}^{M-1}\log^{4\omega_j}(\delta/\varphi_{\mathcal V,p}(Q_j))}\\
&\ge C\left (\dfrac{|(G_z)_{\mathcal V,0}|^{\tilde\omega (q-2N+k-1)-M+k-\epsilon\sigma_{M+1}}|(G_z)_{\mathcal V,M}|\prod_{p=0}^M|(G_z)_{\mathcal V,p}|^\epsilon}{\prod_{j=1}^q(|Q_j(F)|\prod_{p=0}^{M-1}\log(\delta/\varphi_{z,p}(Q_j)))^{\omega_j}}\right)^{\frac{2}{\sigma_M+\epsilon\tau_M}}dd^c|z|^2
\end{align*}
for some positive constant $C$. On the other hand, we have $|h_{zj}|\le\|G_z\|^{d\eta_j}$,
\begin{align*}
|(G_z)_{\mathcal V,0}|^{\tilde\omega (q-2N+k-1)-M+k-\epsilon\sigma_{M+1}}
&=|(G_z)_{\mathcal V,0}|^{\gamma-\epsilon\sigma_{M+1}+\sum_{j=1}^q\omega_j\eta_j}\\
&\ge |(G_z)_{\mathcal V,0}|^{\gamma-\epsilon\sigma_{M+1}}h_1^{\omega_1}\cdots h_q^{\omega_q}.
\end{align*}
This implies that $\Delta \log\eta^2\ge C\eta^2$. Therefore, $d\tau^2$ has strictly negative curvature.

As we known that the universal covering surface of $S$ is biholomorphic to $\C$ or a disk in $\C$. If the universal covering of $S$ is holomorphic to $\C$ then from \cite[Theorem 2]{QA} we have
$$\sum_{j=1}^q\delta^{S,M}_{g,S}\ge\sum_{j=1}^q\delta_g^M(Q_j)\le \dfrac{(2N-k+1)(M+1)}{k+1},$$
which contradicts the supposition. Now, we consider the case where the universal covering surface $\tilde S$ of $S$ is biholomorphic to the unit disc $\Delta$. Without loss of generality, we may assume that $\tilde S=\Delta$ and have the covering map $\Phi :\Delta\rightarrow S$. From Lemma \ref{lem3.6} we have
$$ \Phi^*d\tau^2\le d\sigma_{\Delta}^2,$$
where $d\sigma_{\Delta}=\dfrac{2}{1-|z|^2}|dz|^2$ with the complex coordinate $z$ on $\Delta$.

Since $S$ has total finite curvature, $S$ is is conformally equivalent to a compact surface $\overline S$ punctured at a finite number of points $P_1,\ldots,P_r$. Then,  there are disjoint neighborhoods $U_i$ of $P_i\ (1\le i\le r)$ in $\overline S$ and biholomorphic maps $\phi_i:U_i\rightarrow\Delta$ with $\phi_i(P_i)=0$. Note that, the Poincare-metric on $\Delta^*=\Delta\setminus\{0\}$ is given by $d\sigma_{\Delta^*}^2=\dfrac{4|dz|^2}{|z|^2\log^2|z|^2}$. Hence, by the decreasing distance property, we have
$$\Phi^*d\tau^2\le d\sigma_\Delta^2\le C\cdot(\Phi\circ\phi_i^{-1})^*d\sigma_{\Delta^*}^2\ (1\le i\le r)$$
for some positive constant $C$. This implies that
$$\int_{U_i}\Omega_{d\tau^2}\le \int_{\Phi^{-1}(U_i)}\Phi^*\Omega_{\sigma_\Delta^2}\le lC\int_{\Delta^*}\Omega_{d\sigma_{\Delta^*}^2}<\infty.$$
where $l$ is the number of the sheets of the covering $\Phi$. Then, it yields that
$$\int_S\Omega_{d\tau^2}\le \int_{S\setminus\bigcup_{i=1}^rU_i}\Omega_{d\tau^2}+lCr\int_{\Delta^*}\Omega_{d\sigma_{\Delta^*}^2}<\infty.$$

Now, denote by $ds^2$ the original metric on $S$. Similar as (\ref{new3}), we have
$$ dd^c\log\lambda_z \ge\dfrac{\gamma-\epsilon\sigma_{M+1}}{\sigma_M+\epsilon\tau_M}d\Omega_g.$$ 
Then there is a subharmonic function $v_z$ such that
\begin{align*}
\lambda_z^2|dz|^2&=e^{v_z}\|G_z\|^{2\frac{\gamma-\epsilon\sigma_{M+1}}{\sigma_M+\epsilon\tau_M}}|dz|^2\\
&=e^{v_z}\|G_z\|^{2\frac{\gamma-\sigma_M-\epsilon\tau_{M+1}}{\sigma_M+\epsilon\tau_M}}\|G_z\|^2 |dz|^2\\
&=e^w ds^2
\end{align*}
for a subharmonic function $w$ on $S$. Since $S$ is complete with respect to $ds^2$, applying a result of S. T. Yau \cite{Y76} and L. Karp \cite{K82} we have
$$ \int_S\Omega_{d\tau^2}=\int_Se^w\Omega_{ds^2}=+\infty.$$
This is a contradiction.

Then we must have
$$\sum_{j=1}^q\delta^{S,M}_{g,S}(Q_j)\le \dfrac{(2N-k+1)(M+1)}{k+1}+\dfrac{(2N-k+1)M(M+1)}{2d(k+1)}.$$
The theorem is proved.
\end{proof}

\noindent{\bf Acknowledgements.} This work was completed when the author was visiting at Institute of Mathematics, Vietnamese Academy of Science and Technology (VAST) with the support of The Targeted Grants to Institutes program of Simon Foundation, under grant number 558672. We would like to thank the Institute of Mathematics for the kind hospitality and thank Simon Foundation for the support. This research is funded by Vietnam National Foundation for Science and Technology Development (NAFOSTED) under grant number 101.02-2021.12.

\end{document}